\documentclass[12pt,a4paper,reqno]{amsart}

\usepackage[usenames,dvipsnames]{color}

\usepackage{tikz,graphicx}
\usepackage[all,arc,curve,color,frame,graph,matrix,cmtip,poly]{xy}
\usepackage{tikz-cd}
\usetikzlibrary{decorations.markings}
\tikzstyle directed=[postaction={decorate,decoration={markings,
    mark=at position .65 with {\arrow{latex}}}}]

\usepackage{amsfonts,amsmath,amssymb,color,amscd,amsthm}
\usepackage[T1]{fontenc}

\usepackage{enumerate}

\makeatletter
\renewcommand\p@enumii{}
\makeatother

\usepackage{ulem}
\usepackage{comment}

\usepackage{cases}
\usepackage{units}

\usepackage[]{hyperref}
\hypersetup{
    colorlinks=true,
    allcolors=violet,
    linktocpage=true
}
\usepackage[capitalise,noabbrev]{cleveref}

\newtheorem{maintheorem}{Theorem}

\newtheorem{lemma}{Lemma}
\newtheorem{theorem}[lemma]{Theorem}
\newtheorem{definition}[lemma]{Definition}
\newtheorem{proposition}[lemma]{Proposition}
\newtheorem{corollary}[lemma]{Corollary}

\newtheorem{remark}[lemma]{Remark}

\theoremstyle{example}

\theoremstyle{idea}

\newtheorem*{question*}{Question}
\newtheorem*{problem*}{Problem}

\usepackage{stackengine}

\newcommand\A{{\mathbb A}}
\newcommand\C{{\mathbb C}}

\newcommand\F{{\mathbb F}}
\newcommand\FFin{{\mathbf F}}
\renewcommand\H{{\mathbb H}}

\newcommand\p{{\mathbb P}}
\newcommand\Q{{\mathbb Q}}
\newcommand\R{{\mathbb R}}

\newcommand\Z{{\mathbb Z}}

\DeclareMathOperator{\Sym}{\mathcal S}

\newcommand{\smat}[4]{\left( \begin{smallmatrix} #1\,&#2\\ #3\,&#4 \end{smallmatrix} \right)}

\renewcommand{\Sym}{\mathfrak S}
\newcommand{\Alt}{\mathfrak A}
\newcommand\Bir{{\mathrm{Bir}}}

\newcommand\Aff{{\mathrm{Aff}}}
\newcommand\Bl{{\mathrm{Bl}}}

\newcommand\GL{{\mathrm{GL}}}

\newcommand\PGL{{\mathrm{PGL}}}
\newcommand\SL{{\mathrm{SL}}}

\DeclareMathOperator{\sgn}{sgn}

\DeclareMathOperator{\PSL}{PSL}

\DeclareMathOperator{\Fix}{Fix}

\newcommand{\Jonq}{{\rm Jonq}}
\newcommand{\JJ}{\Jonq_{p_0}}

\newcommand\Aut{{\mathrm{Aut}}}

\renewcommand{\phi}{{\varphi}}

\DeclareMathOperator{\Stab}{Stab}

\DeclareMathOperator{\Pic}{Pic}

\DeclareMathOperator{\rk}{rk}

\DeclareMathOperator{\pr}{pr}
\DeclareMathOperator{\Cent}{Cent}

\DeclareMathOperator{\length}{\ell}
\DeclareMathOperator{\Ax}{Ax}

\newcommand{\bp}{{\rm Bir}({\mathbb P}^2)}

\usepackage{mathtools}

\title[The derived length of subgroups of the Cremona group]{The derived length of subgroups\\ of the Cremona group}

\author{Jean-Philippe Furter}
\address{Institut de Math\'ematiques de Bordeaux, Universit\'e de Bordeaux, CNRS, UMR 5251, France}
\email{jean-philippe.furter@math.u-bordeaux.fr}

\author{Isac Hed\'en}
\address{Department of Mathematics, Uppsala University, Sweden}
\email{isac.heden@math.uu.se}

\begin{document}
\maketitle

\begin{abstract}
We prove that the maximal derived length of a solvable subgroup of the plane Cremona group over $\C$ is equal to $5$. The main difficulty is the case of subgroups containing loxodromic elements. To treat this case, we prove that the maximal derived length of a finite solvable subgroup is $4$ and establish a centraliser theorem: the centraliser of a finite solvable subgroup of derived length $4$ contains no loxodromic element. The proof of the centraliser theorem relies on Blanc's classification of maximal algebraic subgroups and then uses equivariant Sarkisov theory, orbit estimates, superrigidity, and fixed curves of genus at least two.
\end{abstract}

\tableofcontents
\section{Introduction}

For a group $G$, set $D^0G:=G$, and $D^{i+1}G:=[D^iG,D^iG]$. If $G$ is solvable, its \textit{derived length}, denoted by $\ell(G)$, is the smallest non-negative integer $r$ such that $D^rG=1$.

For an arbitrary group $\Gamma$, we set
\begin{align*}
\psi(\Gamma)& :=\sup\bigl\{\ell(G)\mid G \subseteq \Gamma \text{ is solvable}\bigr\},\\
\psi_{\mathrm{finite}}(\Gamma)&:=
\sup\bigl\{\ell(G)\mid G\subseteq\Gamma
\text{ is finite and solvable}\bigr\},
\end{align*}
and for a subgroup $\Gamma\subseteq\bp$, we set 
\begin{align*}
\psi_{\mathrm{bounded}}(\Gamma)&:=
\sup\bigl\{\ell(G)\mid G\subseteq\Gamma
\text{ is bounded and solvable}\bigr\}.
\end{align*}
Recall that a subset $A \subseteq \bp$ is called bounded if there exists a constant $K$ such that $\deg(g)\leq K$ for all $g \in A$.

Derived length has long been studied for linear groups. Newman computed  the exact value $\psi \bigl( \textrm{GL}(n,\C) \bigr)$ for all $n$ (see \cite[Theorem A, p.~60]{Newman1972}). It turns out that $\psi \bigl( \textrm{GL}(n,\C) \bigr)$ is equivalent to $5\log_9(n)$ as $n$ goes to infinity (see \cite[Theorem 3.10]{Wehrfritz1973}). Let us give a few particular values for $\psi \bigl( \textrm{GL}(n,\C) \bigr)$.

\[
\begin{tabular}{|c|l|l|l|l|l|l|l|l|l|l|l|l|l|l|l|}
\hline
$n$  & $1$ & $2$ & $3$ & $4$ & $5$  & $6$ & $7$ & $8$ & $9$ & $10$  \\
\hline
$\psi \bigl( \textrm{GL}(n,\C) \bigr) $  & $1$  & $4$ & $5$ & $6$ & $7$ & $7$ & $7$ & $8$ & $9$ & $10$  \\
\hline
\end{tabular}
\]
\medskip

He has  also computed the values of $\psi ( \Sym_n ) $ for all $n$ in \cite[Theorem~B, p.~61]{Newman1972}. Here are some particular values:

\[
\begin{tabular}{|c|l|l|l|l|l|l|l|l|l|l|l|l|l|l|l|}
\hline
$n$  & $1$ & $2$ & $3$ & $4$ & $5$  & $6$ & $7$ & $8$ & $9$ & $10$  \\
\hline
$\psi ( \Sym_n ) $  & $0$  & $1$ & $2$ & $3$ & $3$ & $3$ & $3$ & $4$ & $5$ & $5$  \\
\hline
\end{tabular}
\]

If $\operatorname{Diff}(\C^n,0)$ denotes the group of germs of local complex
analytic diffeomorphisms at the origin, then we have
\[ \psi\bigl(\operatorname{Diff}(\C,0)\bigr)=2 \]
(see \cite{Loray1999} and \cite{IlyashenkoYakovenko2008}), and
Ribón proved in \cite{Rib2019} that we have
\[ \psi\bigl(\operatorname{Diff}(\C^n,0)\bigr)=2n+1
\qquad\text{for } 2 \leq n \leq 5. \]
Let $\operatorname{Aff}_n(\C)$ be the group of affine automorphisms of the complex affine $n$-space $\A^n$ and let $\Aut (\A^n)$  be the group of  polynomial automorphisms. By \cite[Propositions 3.14 \& 3.16]{FurterPoloni2018}, we have
\[ \psi \bigl( \operatorname{Aff}_2(\C)  \bigr)  =  \psi \bigl( \Aut (\A^2) \bigr) = 5.\]
More recently, Regeta, Urech and van Santen proved that the maximal derived length of connected solvable subgroups of the birational transformation group of an $n$-dimensional complex variety is at most $2n$ with equality if and only if the variety is birationally equivalent to $\p^n$ \cite[Theorem~1.3]{RegetaUrechVanSanten2025}. In particular, the maximal derived length of connected solvable subgroups of the plane Cremona group is equal to~$4$. This last result is also \cite[Lemma 6.5]{FurterHeden2023}.

For arbitrary solvable subgroups, Urech proved the upper bound
\[ \psi\bigl(\bp)\leq8 \]
\cite[Theorem~1.10]{Urech2021} and since $\operatorname{Aff}_2 (\C) \subseteq \bp$ satisfies $ \psi \bigl( \operatorname{Aff}_2(\C)  \bigr)  =5$, one obtains
\[ 5 \leq \psi\bigl(\bp) \leq 8. \]
Our main result closes this gap:

\begin{maintheorem} \label{theorem: main}
Every solvable subgroup of $\bp$ has derived length at most five. Moreover, this bound is attained. Equivalently, we have $ \psi \bigl( \bp \bigr)=5$.
\end{maintheorem}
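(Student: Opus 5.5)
The lower bound comes from the inclusion $\Aff_2(\C)\subseteq\bp$ together with $\psi\bigl(\Aff_2(\C)\bigr)=5$. So the work is the upper bound: every solvable $G\subseteq\bp$ has $\ell(G)\le 5$. I will split according to the action of $G$ on the infinite-dimensional hyperbolic space $\H^\infty$ attached to the Picard--Manin space, using the classification of solvable subgroups (Urech, Cantat). There are three cases.

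\textbf{Case 1: $G$ bounded.} Then $G$ lies in an algebraic subgroup, and by Blanc's classification of maximal algebraic subgroups it lies in one of the following: $\Aut(S)$ for a del Pezzo surface $S$; $\Aut(S,\pi)$ for a conic bundle or Hirzebruch surface; or $\PGL_3(\C)$. For each such group I bound the derived length of solvable subgroups. This uses the structure $1\to\Aut^0\to\Aut\to F\to 1$ with $F$ finite. The connected solvable part has length at most $4$, by the Regeta--Urech--van Santen / Furter--Hedén result. In the linear cases, Newman's values apply: $\psi\bigl(\GL(3,\C)\bigr)=5$, so $\PGL_3$ gives at most $5$. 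For conic bundles I use the base action (in $\PGL_2$, length $\le 4$) and the fibre action (in $\PGL_2$ of a function field, roughly metabelian), and then check carefully that the total is at most $5$. This case is routine but case-heavy.

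\textbf{Case 2: $G$ unbounded without loxodromics.} Then $G$ preserves a rational or elliptic fibration. For a rational fibration, $G\subseteq \PGL_2(\C(x))\rtimes\PGL_2(\C)$. A solvable subgroup of $\PGL_2(K)$ has length at most $4$. Because $G$ is unbounded, I will argue that the image or the kernel must be small (for example, the kernel part is virtually abelian or metabelian), so that the total is at most $5$. For an elliptic fibration, the group is virtually abelian-by-(a subgroup of $\PGL_2$ that is finite or virtually cyclic), which gives a small length directly.

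\textbf{Case 3: $G$ contains a loxodromic $f$.} By Cantat/Urech, $G$ is then virtually cyclic, and more precisely $G$ normalises a finite group. I expect a structure $1\to N\to G\to\Z$ or $\Z\rtimes\Z/2$, where $N$ is a finite solvable group normalised by $f$. This gives $\ell(G)\le \ell(N)+2$, and $\ell(G)\le\ell(N)+1$ when there is no dihedral part. Two ingredients are needed:
\begin{enumerate}
\item $\psi_{\mathrm{finite}}(\bp)=4$, proved via Dolgachev--Iskovskikh's classification of finite subgroups: check the finite solvable subgroups of automorphism groups of del Pezzo surfaces and conic bundles.
\item A centraliser theorem: if $N$ is finite solvable with $\ell(N)=4$, then no loxodromic element centralises $N$.
\end{enumerate}
With these, a power $f^k$ centralises $N$. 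Hence $\ell(N)\le 3$, or else the extension is controlled, giving $\ell(G)\le 5$. The bookkeeping to avoid the $+2$ needs care: pass to $D^1G$, which lies in $N\cdot\langle f^2\rangle$-type subgroups.

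The main obstacle is the centraliser theorem. My approach is to let $f$ act on the set of $N$-equivariant minimal models of $(\p^2,N)$ and run the equivariant Sarkisov program. The finite solvable groups of length $4$ are, for example, extensions involving binary-octahedral-type or $\mathrm{Hes}$-type subgroups of $\PGL_3$ and certain del Pezzo groups. For each, I would show that the minimal model is $N$-birationally superrigid, or that there are only finitely many links whose orbits are constrained by counting small $N$-orbits. That forces $f$ into a regular automorphism group, which contradicts loxodromicity. In the remaining conic bundle cases, I would use that $N$ fixes a curve of genus $\ge 2$, which the centralising $f$ must preserve, pinning $f$ down.
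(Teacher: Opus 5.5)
Your overall architecture (split by dynamical type, use $\psi_{\mathrm{finite}}(\bp)=4$, and exclude the extremal case with a centraliser theorem) matches the paper's. However, the structural claim in Case~3 is false. A solvable group containing a loxodromic element need not be virtually cyclic, nor finite-by-(subgroup of $\Z\rtimes\Z/2\Z$). For example, take $(\C^*)^2\rtimes\langle A\rangle\subseteq\Aut\bigl((\C^*)^2\bigr)$ with $A=\smat{2}{1}{1}{1}$. It is metabelian, contains the loxodromic monomial map $(x,y)\mapsto(x^2y,xy)$, and is uncountable, so it has no finite normal subgroup with virtually cyclic quotient.

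What is true is that $G$ preserves $\Ax(f)$: a conjugate of $f$ with a different axis shares no boundary fixed point with $f$, so suitable powers would play ping-pong. Hence $D^2G\subseteq\Fix(\Ax(f))$, which is a bounded normal subgroup of $\Stab(\Ax(f))$. If this subgroup is infinite, your finite-$N$ argument does not apply. You then need Cantat's theorem: a group containing a loxodromic element and an infinite bounded normal subgroup is conjugate into $(\C^*)^2\rtimes\GL_2(\Z)$. This gives $\ell(G)\le1+\psi(\GL_2(\Z))\le5$. Only when $\Fix(\Ax(f))$ is finite does your argument run, and then no extra bookkeeping is needed. Indeed, $D^2G$ is finite and normalised by $f$, so some $f^m$ centralises it. The centraliser theorem then gives $\ell(D^2G)\le3$, hence $\ell(G)\le5$.

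There are further gaps.
\begin{itemize}
\item \textbf{Torsion groups in Case~2.} The claim ``unbounded without loxodromics implies an invariant fibration'' is justified only when $G$ contains a parabolic element. For groups of elliptic elements, Urech's theorem gives three alternatives: $G$ preserves a rational fibration, or is bounded, or is torsion. The unbounded torsion alternative is not covered by your case split. It is easy to handle: a solvable torsion group is locally finite, and derived length is detected on finitely generated subgroups, so $\ell\le\psi_{\mathrm{finite}}(\bp)=4$. (The paper instead uses Urech's result that such a group is isomorphic to a bounded one.) But it must be addressed.
\item \textbf{The Jonqui\`eres bound.} Solvable subgroups of $\PGL_2(\C(y))$ have length at most $3$, not $4$: embed $\C(y)$ into $\C$. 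Likewise $\psi(\PGL_2(\C))=3$. The real mechanism is that the bound $3+3$ is attained only if kernel and image are both $\Sym_4$, which makes the group finite. Your claim that unboundedness makes the kernel ``virtually abelian or metabelian'' is unjustified.
\item \textbf{The centraliser theorem.} Your plan aims to force $f$ into a regular automorphism group. This cannot work in the intransitive $\p^2$, odd Hirzebruch and exceptional conic-bundle cases. Take intransitive $F$ acting linearly on the chart $x=1$ through $G\subseteq\GL_2(\C)$. Let $h$ be a non-constant homogeneous degree-$0$ rational function invariant under the image $\Sym_4$ of $G$ in $\PGL_2(\C)$. Then $w\mapsto h(w)\,w$ commutes with $F$ and is not regular. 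Moreover, no element of $\PGL_3(\C)$ or $\Aut(\F_n)$ fixes a curve of positive genus, so your genus trick is unavailable there too.
\end{itemize}

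In these cases you must instead show that every link of a reduced $F$-Sarkisov factorisation is an elementary transformation over the same base. Then a rational fibration is preserved. The ingredients are:
\begin{itemize}
\item centres have even cardinality, because $\Sym_4$-orbits on $\p^1$ are even, so Hirzebruch parity is preserved;
\item $\F_1\to\p^2\to\F_1$ is excluded, because the unique fixed point is the only possible first centre;
\item in the exceptional conic-bundle case, centres on $s_1\cup s_2$ are balanced.
\end{itemize}
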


\begin{remark}
Since $\Bir (\p^1) = \Aut (\p^1) = \PGL_2 (\C)$,
we have $ \psi \bigl( \Bir (\p^1) \bigr)=3$ by Lemma~\ref{lemma: psi(PGL_2(C))=3} below.
\end{remark}

The main difficulty in proving Theorem~\ref{theorem: main} is the case of a solvable subgroup containing a loxodromic element. We treat this case using the following two results.

\begin{maintheorem}\label{theorem: psi_{finite}(Bir(P2)) =4}
Every finite solvable subgroup of $\bp$ has derived length at most four, and this bound is attained. Equivalently, we have $\psi_{\mathrm{finite}}(\bp)=4$.
\end{maintheorem}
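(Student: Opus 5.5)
We have to show two things: a finite solvable subgroup of $\bp$ has derived length at most $4$, and some finite solvable subgroup has derived length exactly $4$.

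\emph{Attaining the bound.} Take $G=\GL_2(\F_3)$ acting linearly on $\C^2$; it has derived length $4$, with derived series $\GL_2(\F_3)\supset\SL_2(\F_3)\supset Q_8\supset\Z/2\supset 1$. The Weil representation gives a faithful $2$-dimensional complex representation of a group isomorphic to $\GL_2(\F_3)$; equivalently, use the binary octahedral group or its variant inside $\GL_2(\C)$. A finite subgroup of $\GL_2(\C)$ acts biregularly on $\A^2$, hence on $\p^2$, so it embeds in $\bp$. This already gives $\psi_{\mathrm{finite}}(\bp)\ge 4$. It also fits Lemma~\ref{lemma: psi(PGL_2(C))=3}: the image of $G$ in $\PGL_2$ is $\Sym_4$, of length $3$, and the central $\Z/2$ adds the fourth step.

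\emph{Upper bound.} By the regularisation of finite groups and the equivariant MMP, every finite $G\subseteq\bp$ is conjugate to a group of automorphisms of one of the following:
\begin{enumerate}[(a)]
\item a del Pezzo surface;
\item a conic bundle $\pi\colon X\to\p^1$ that is $G$-equivariant.
\end{enumerate}

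In case (b) there is an exact sequence $1\to G_F\to G\to G_B\to 1$. Here $G_B\subseteq\PGL_2(\C)$, so $\ell(G_B)\le 3$ by Lemma~\ref{lemma: psi(PGL_2(C))=3}. The group $G_F$ acts faithfully on the generic fibre, a conic over $\C(t)$, so $G_F\subseteq\PGL_2(\C(t))$. Finite subgroups of $\PGL_2(\C(t))$ are cyclic, dihedral, $\Alt_4$, $\Sym_4$ or $\Alt_5$; for conic bundles with singular fibres, Blanc's classification shows they are in fact cyclic or dihedral (more precisely $(\Z/2)^2$-type or cyclic). This naive count only gives $\ell(G)\le 3+\ell(G_F)$, which can be too large, so the extension has to be analysed. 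When $G_F$ is cyclic or a Klein group of fibre involutions, it is abelian and normal, giving $\ell\le 4$. When $G_F$ is dihedral, one works with its cyclic rotation part $C$, which is characteristic in $G_F$ and hence normal in $G$. Then $G/C$ is an extension of $G_B$ by $\Z/2$ in which the involution class is a de Jonquières involution. The aim is to show that the relevant derived subgroups land inside $C$ after three steps. Concretely: $D^3G\subseteq G_F$ because $\ell(G_B)\le 3$, and one checks that $D^3G$ is abelian by showing that a rotation-reversing element cannot be a commutator of the needed depth, using that $G_B$ of length $3$ forces $G_B\cong\Sym_4$ or $\Alt_4$-type configurations. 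These are incompatible with dihedral fibre groups carrying nontrivial reflections over the corresponding orbits of singular fibres.

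In case (a), finite groups of automorphisms of del Pezzo surfaces are classified (Dolgachev--Iskovskikh). Their solvable members can be checked degree by degree:
\begin{itemize}
\item degree $9$: $\PGL_3(\C)$. Finite solvable subgroups are either monomial, with $\ell\le \ell(\text{diagonal})+\ell(\Sym_3)=1+2=3$, or imprimitive/primitive like the Hessian groups of order $216$, which have length $4$ at most, or they come from $\GL_2$, with length $\le 4$.
\item degree $8$: subgroups of $\PGL_2^2\rtimes\Z/2$. This case is the most delicate, since the naive bound is $3+1$ plus extra; here one must use that the swap identifies both factors.
\item degree $6$: $(\C^*)^2\rtimes(\Sym_3\times\Z/2)$, so $\ell\le 1+2=3$ up to an extra step, which is still at most $4$.
\item degrees $\le 5$: $\Aut$ is finite and explicit (e.g.\ $\Sym_5$, $W(D_5)$-subgroups, $W(E_6)$, $W(E_7)$, $W(E_8)$-subgroups). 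Bound the length of their solvable subgroups by Newman's values for symmetric groups, or by direct computation in the Weyl groups.
\end{itemize}

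\emph{Main obstacle.} The main obstacle is the product cases: $\p^1\times\p^1$, and conic bundles with dihedral fibre group. There a naive extension bound gives $5$ or more, and one must show that the derived series collapses one step earlier. I would first try the following for $\p^1\times\p^1$. Let $H=G\cap(\PGL_2\times\PGL_2)$, which has index at most $2$ in $G$. Then $D^1G\subseteq H$. The projections of $H$ to the two factors have length $\le 3$, and when the swap is present they are conjugate. A subgroup of order $2^k\cdot 3$ of that size then forces $D^4G=1$; this can be checked via the structure of $\Sym_4\wr\Z/2$, whose solvable subgroups have length $\le 4$ by computation.
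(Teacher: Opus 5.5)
Your lower bound is fine: $\GL_2(\FFin_3)$ or the binary octahedral group in $\GL_2(\C)$ works as well as the paper's $H_{216}$. Reducing via the equivariant MMP to minimal del Pezzo surfaces and conic bundles, instead of via Blanc's maximal algebraic subgroups, is also legitimate. The upper bound, however, has genuine gaps.

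\textbf{Conic bundles with dihedral fibre group.} This is exactly the case you flag as the main obstacle: $G_F\simeq D_{2m}$ with $m\geq3$. You dispose of it with ``one checks'', and the reason you sketch ($\Sym_4$-configurations versus reflections over orbits of singular fibres) is not an argument. Something must be said. The quotient $G/C$ is a central extension of $G_B\simeq\Sym_4$ by $\Z/2\Z$, and abstractly such extensions ($\GL_2(\FFin_3)$, the binary octahedral group) have derived length $4$, which would only give $\ell(G)\leq5$.

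Your instinct that a reflection cannot lie deep in the derived series is right, and the missing step is elementary:
\begin{itemize}
\item $C$ is normal in $G$ and $\Aut(C)$ is abelian, so $D^1G$ centralises $C$.
\item A reflection inverts $C$ and $m\geq3$, so $D^1G\cap G_F\subseteq\Cent_{G_F}(C)=C$.
\item Since $\ell(G_B)\leq3$, we get $D^3G\subseteq D^1G\cap G_F\subseteq C$, hence $\ell(G)\leq4$.
\end{itemize}
The paper never meets this issue. In Blanc's list the conic bundles are either exceptional, where $(\rho,\sigma)$ maps $\Aut(S,\pi)$ onto $H_\Delta\times\Z/2\Z$ with torus kernel, or $(\Z/2\Z)^2$-conic bundles, with kernel $(\Z/2\Z)^2$.

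Two smaller points in this case. Your parenthetical ``$(\Z/2)^2$-type or cyclic'' is false: for an exceptional conic bundle, $\Aut(S/\p^1)\simeq\C^*\rtimes\Z/2\Z$ contains dihedral groups of every order. Also, conic bundles without singular fibres, where $G_F$ may be $\Sym_4$, are left untreated. They are easy, but must be handled: $G_F$ is cyclic on $\F_n$ for $n\geq1$, and $G\subseteq\PGL_2(\C)\times\PGL_2(\C)$ on $\F_0$.

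\textbf{Del Pezzo surfaces of degree at most $3$.} Bounding derived lengths inside the Weyl groups cannot give $4$. We have $W(E_8)\supseteq W(A_8)\simeq\Sym_9$, and $\psi(\Sym_9)=5$ by the table in the introduction (attained by $\mathrm{AGL}_2(\FFin_3)$). Even $W(E_6)\simeq\mathrm{PGSp}_4(\FFin_3)$ contains the parabolic subgroup $3^{1+2}\rtimes\GL_2(\FFin_3)$, which maps onto $\mathrm{AGL}_2(\FFin_3)$. So you need the actual automorphism groups together with some geometry, as in the paper:
\begin{itemize}
\item In degree $1$, every automorphism fixes the isolated fixed point of the Bertini involution, and the tangent representation embeds $\Aut(S)$ into $\GL_2(\C)$.
\item In degree $2$, $\Aut(S)\simeq\Z/2\Z\times\Aut(\p^2,Q_S)$, which reduces to the $\PGL_3(\C)$ case.
\item In degree $3$, use the known list of automorphism groups of smooth cubic surfaces; the largest are $(\Z/3\Z)^3\rtimes\Sym_4$ for the Fermat cubic and $\Sym_5$ for the Clebsch cubic.
\end{itemize}
Degrees $4$ and $5$ are fine even your way.

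\textbf{The case $\p^1\times\p^1$.} This case is not delicate. A solvable subgroup of $\PGL_2(\C)\times\PGL_2(\C)$ lies in the product of its two projections, so it has length at most $3$. The index-$2$ extension then gives at most $4$: the naive bound is already $4$, not $5$.
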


The second result provides the main geometric ingredient in the proof.

\begin{maintheorem}[Centraliser theorem]\label{theorem: the centraliser of a solvable group of derived length 4 has no loxodromic element}
Let $F \subseteq \bp$ be a finite solvable group of derived length $4$. Then $ \Cent_{\bp}(F)$ contains no loxodromic element.
\end{maintheorem}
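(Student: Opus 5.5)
We argue by contradiction: suppose $g\in\Cent_{\bp}(F)$ is loxodromic with $F$ finite solvable and $\ell(F)=4$. The strategy is to regularise $F$ and then to play off the rigidity of the finite action against the dynamics of $g$. Since $F$ is finite, it is conjugate to a subgroup of $\Aut(X)$ for some rational $F$-surface $X$. By the $F$-equivariant minimal model programme we may take $(X,F)$ to be either an $F$-Mori conic bundle or a del Pezzo surface with $\rk \Pic(X)^F=1$. Blanc's classification of maximal finite (equivalently, maximal algebraic) subgroups then lists the possible pairs. Running through this list, the derived length condition $\ell(F)=4$ should leave only a few configurations. From $\psi(\PGL_2(\C))=3$ and $\psi(\GL_2(\C))=4$, and from the structure of automorphism groups of del Pezzo surfaces, we expect these to be the following: $\p^2$ with $F\subseteq\PGL_3(\C)$ an extension of the Hessian-type groups such as $(\Z/3)^2\rtimes \SL_2(\F_3)$; $\p^1\times\p^1$, or conic bundles, where $F$ maps onto a group of derived length $3$ (essentially $\Sym_4$ or a binary octahedral image) acting on the base and fibres; and certain del Pezzo surfaces of degree $\le 4$ (e.g.\ the degree-$4$ surface with $(\Z/2)^4\rtimes$ something, or the cubic/degree-$2$/degree-$1$ cases).

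For each remaining configuration we must rule out a loxodromic element commuting with $F$. Two general tools cover most cases. \emph{(i) Fixed curves of genus $\ge 2$.} If some element $h\in F$ fixes pointwise a curve $C$ of genus $\ge2$ (typical for involutions or order-$3$ elements on del Pezzo surfaces of degree $\le 2$, and for de Jonquières-type elements on conic bundles), then $g$ commutes with $h$, so $g$ permutes the finitely many non-rational curves fixed by $h$. Hence a power of $g$ preserves $C$ and induces an automorphism of $C$ of finite order, and so a further power fixes $C$ pointwise. By Blanc--Cantat-type results, an element fixing a curve of positive genus pointwise is not loxodromic. \emph{(ii) Superrigidity and Sarkisov theory.} When $(X,F)$ is $F$-birationally superrigid (del Pezzo of degree $1$, $2$, or $3$ with $\rk\Pic^F=1$ in suitable cases), every $F$-equivariant birational map $X\dashrightarrow X$ is biregular. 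Now $g$ commutes with $F$, so it is $F$-equivariant; thus $g\in\Aut(X)$, which is finite, a contradiction.

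The remaining, non-rigid cases ($\p^2$, $\p^1\times\p^1$, conic bundles, and degree-$4$ or degree-$5$ del Pezzos) are handled with the equivariant Sarkisov programme. Write $g$ as a composition of $F$-equivariant Sarkisov links. Each link blows up an $F$-orbit of size bounded by the degree, e.g.\ fewer than $9$ points on $\p^2$ and fewer than $8$ on $\p^1\times\p^1$. The orbit estimates then do the work: a group of derived length $4$ acts with large minimal orbit sizes outside small invariant sets. One therefore shows that the only available links lead to finitely many models, each with finite automorphism group, or stay inside a conic bundle structure that $g$ must preserve. In the latter case $g$ lies in a de Jonquières-type group, whose elements are never loxodromic (they preserve a fibration, so they are elliptic or parabolic). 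In the orbit-size bookkeeping one also has to control the image of $F$ in $\Aut(\text{base})$ versus its action on fibres, and to use that $\ell(F)=4$ forces the image in $\PGL_2$ to have derived length $3$.

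I expect the main obstacle to be this last step, namely the exhaustive orbit-size analysis on $\p^2$ and $\p^1\times\p^1$. There, Sarkisov links such as Geiser/Bertini-type involutions could in principle produce infinite-order equivariant maps. One must check, case by case for each group of derived length $4$ in $\PGL_3(\C)$ and in $\Aut(\p^1\times\p^1)$, that no $F$-orbit of the sizes needed for a link lies in general position.
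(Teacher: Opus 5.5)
Your proposal is a programme rather than a proof. The gap is exactly at what you call the main obstacle: nothing in the proposal forces a loxodromic $g$ out of the configurations that are not $F$-superrigid, and your guessed list misidentifies which configurations those are.

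There is also a framework issue. Blanc's theorem classifies maximal algebraic subgroups, several of which are infinite; it does not classify $F$-minimal pairs. If you first pass to an $F$-Mori fibre space, you must handle all $F$-minimal conic bundles. If instead you use Blanc's list, you cannot assume $F$-minimality, which is why the paper needs extra reductions for exceptional conic bundles and for degree one.

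The decisive case you omit is the intransitive one in $\PGL_3(\C)$: $F$ is the image of a finite $G\subseteq\GL_2(\C)$ with $\ell(G)=4$, acting by $[x:A(y,z)]$. Its relatives are the Hirzebruch surfaces $\F_n$ with $n$ odd and the exceptional conic bundles. In the $\p^2$ case, $X$ is not $F$-superrigid, since blowing up the fixed point gives a link $\p^2\dashrightarrow\F_1$. Moreover, no element of $F$ fixes a non-rational curve, because elements of $\PGL_3(\C)$ fix pointwise only points and lines. So neither of your tools applies.

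Conversely, the configurations you do list on $\p^2$ and $\p^1\times\p^1$ are immediate. The only primitive solvable group with $\ell=4$ is $H_{216}$, all of whose orbits have at least $9$ points. In $\Aut(\p^1\times\p^1)$ one shows that $F=(\Sym_4\times\Sym_4)\rtimes\langle\tau\rangle$, whose orbits have at least $36$ points. The bound $|\Omega|<K_X^2$ then gives superrigidity without any analysis of Geiser or Bertini links. Del Pezzo surfaces of degree $2$, $4$ and $5$ never carry $\ell(F)=4$ at all.

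The missing idea is the mechanism that keeps every link inside a rational fibration. In the paper it comes from the fact that $\ell(G)=4$ in $\GL_2(\C)$ forces $-I\in G$ and forces the image of $G$ in $\PGL_2(\C)$ to be $\Sym_4$.
\begin{itemize}
\item \emph{Links from $\p^2$.} Orbits off $D\cup\{p\}$ have at least $12$ points. An orbit $\Omega\subseteq D$ cannot be a centre, since $-K\cdot\widehat D=3-|\Omega|<0$. Hence the only link from $\p^2$ is the blow-up of $p$.
\item \emph{Links on $\F_n$, $n$ odd.} Every $\Sym_4$-orbit on the base is even, so type II links preserve the parity of $n$. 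Thus $\F_0$ is never reached, which rules out type IV. A type III link back to $\p^2$ would be undone by the next link, contradicting reducedness.
\item \emph{$F$-minimal exceptional conic bundles.} Take $1\neq t\in D^3F\subseteq T$. Every centre is $t$-fixed, hence lies on $s_1\cup s_2$. It is split evenly between the two sections because $\sigma|_F=\sgn\circ\rho|_F$. So the exceptional structure with $K_S^2\leq 2$, which excludes types III and IV, propagates along the factorisation.
\end{itemize}

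Your tool (i) also needs input you do not supply, namely an element of $F$ itself with a fixed curve of genus $\geq 2$. Without minimality, in degree one you must show $\beta\in F$; this follows from $\rho_p(\beta)=-I\in\rho_p(F)$. On $(\Z/2\Z)^2$-conic bundles you need a fibrewise involution in $F$ whose branch set has at least $6$ points. The paper obtains it by showing that $F\cap V$ contains an element central in $F$, so that its branch set is $\Sym_4$-invariant.

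Finally, the criterion ``fixing a curve of positive genus pointwise excludes loxodromic elements'' is false for genus one. State it for genus $\geq 2$. Better still, use that an invariant curve of genus $\geq 2$ already excludes loxodromic maps (Diller--Jackson--Sommese), which avoids passing to a power.
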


We now explain how these two results yield the bound $\ell(H)\leq 5$ when $H\subseteq\bp$ is a solvable subgroup containing a loxodromic element. Recall that $\bp$ acts faithfully by isometries on the Picard--Manin hyperbolic space $\H^\infty$; see
\cite[\S3]{Cantat2011}.
The induced isometries are classified according to their dynamical type:  elliptic, parabolic or
loxodromic. A loxodromic element $f$ has exactly two fixed points on $\partial \H^\infty$, and the geodesic line joining these two points is called its axis, denoted $\Ax(f)$. In Section~\ref{subsection: Solvable groups with loxodromic elements} we show that if $H\subseteq\bp$ is a solvable subgroup containing a loxodromic element $f$ then $H$ preserves $\Ax(f)$ as a set. Hence there is a homomorphism
\[ H \longrightarrow \mathrm{Isom}(\Ax(f)) \simeq \mathrm{Isom}(\R). \]
Since $\mathrm{Isom}(\R)$ has derived length two, it follows that $D^2H$ lies in the pointwise stabilizer of the axis. If this stabilizer is infinite, a theorem of Cantat shows that $H$ is conjugate to a subgroup of the
monomial group  $\Aut \bigl( (\C^*)^2 \bigr) =(\C^*)^2 \rtimes \GL_2(\Z)$ from which we obtain $\length(H)\leq 5$.

If it is finite,
Theorem~\ref{theorem: psi_{finite}(Bir(P2)) =4} gives the preliminary estimate
\[ \ell(H) \leq 6. \]
Equality would force $D^2H$ to be a finite solvable subgroup of
derived length four. A suitable power of the original loxodromic
element would then centralize $D^2H$, contradicting
Theorem~\ref{theorem: the centraliser of a solvable group of derived length 4 has no loxodromic element}. Thus the centraliser theorem excludes precisely the extremal case left open by the preceding estimate, and lowers the bound from six to five.

We prove Theorem~\ref{theorem: the centraliser of a solvable group of derived length 4 has no loxodromic element} using Blanc's classification of maximal algebraic subgroups of the plane Cremona group, see Theorem~\ref{theorem: the eleven families of maximal algebraic subgroups of Bir(P2)} below. This reduces the problem to eleven families of del Pezzo surfaces and conic bundles, which we treat using the tools developed in Section~\ref{section: geometric tools}. The eleven cases are handled by three main geometric mechanisms. 

First, for the primitive case on $\p^2$, and for $\p^1\times\p^1$, orbit bounds imply equivariant birational superrigidity,  while for the Fermat cubic we use Blanc's superrigidity result directly. Second, the equivariant Sarkisov program is used to force preservation of a rational fibration for intransitive linear actions on $\p^2$, for Hirzebruch surfaces, and in the exceptional conic-bundle analysis. Third, in the remaining situations, an involution in $F$ fixes a curve of genus at least two, which is necessarily preserved by the centraliser and excludes loxodromic transformations.

For the proof of Theorem~\ref{theorem: main}, it now remains to consider solvable subgroups containing no loxodromic element. Such a group either contains a parabolic element or consists entirely of elliptic elements.

In the first case, the group is conjugate either to a subgroup of the Jonquières group or to a subgroup of the automorphism group of a Halphen surface. The required bound then follows from Proposition~\ref{proposition: psi(Jonq)=5} or Proposition~\ref{proposition: psi(Aut(Halphen)) leq 5}, respectively. In the second case, we use Urech's description of subgroups consisting
entirely of elliptic elements \cite[Theorem~1.4]{Urech2021}: such a group either preserves a rational fibration, is bounded, or is a torsion group. We will in particular use the sharp bound $\psi_{\mathrm{bounded}}(\bp)=5$ (see \cite[Lemma~7.3]{Urech2021}).

The article is organized as follows. In
Section~\ref{section: finite bounded solvable subgroups}, we prove
Theorem~\ref{theorem: psi_{finite}(Bir(P2)) =4}. We also give, for
completeness, a short proof of the above mentioned result $\psi_{\mathrm{bounded}}(\bp)=5$ (this is our Proposition~\ref{proposition: psi_{bounded}(Bir(P2)) =5}). Section~\ref{section: geometric tools} recalls and develops the geometric tools used in the proof of the centraliser theorem, which is carried out in Section~\ref{sec:centraliser-proof}.
Section~\ref{sec:dynamical-bounds} applies the centraliser theorem to prove the derived-length bound for solvable subgroups containing a loxodromic element. It also establishes the bounds for the Jonqui\`eres group and for automorphism groups of Halphen surfaces. Finally, Section~\ref{section: proof of the main theorem} assembles these ingredients to complete the proof of Theorem~\ref{theorem: main}.

\medskip
\noindent\textbf{Acknowledgements.} The authors gratefully acknowledge financial support from G.\ S.\ Magnuson's Fund (grant MG2023-0148), which supported the first author's research visit to Uppsala, and from the Esseen Travel Scholarship, which supported the second author's research visit to Bordeaux. 

During the final stages of the preparation of this article, the authors used ChatGPT (OpenAI) as an auxiliary tool for exploring proof strategies and suggesting formulations. The authors take full responsibility for the content of the article.

\section{Finite and bounded solvable subgroups}\label{section: finite bounded solvable subgroups}

The goal of this section is to prove Theorem~\ref{theorem: psi_{finite}(Bir(P2)) =4} and to recall the sharp bound for bounded solvable subgroups in Proposition~\ref{proposition: psi_{bounded}(Bir(P2)) =5}. We first establish some elementary facts about $\PGL_2(\C)$ and $\GL_2(\C)$, and recall Blanc's classification of maximal algebraic subgroups of $\bp$.

Throughout this article, for a solvable group $G$, we denote its derived length by $\ell(G)$. We will repeatedly use the following elementary inequality: if $1\to N\to G\to Q\to 1$ is a short exact sequence of solvable groups, then $\ell(G)\leq \ell(N)+\ell(Q)$. Indeed, if $\ell(Q)=r$, then $D^rG\subseteq N$, and the inequality follows. Analogously, if $1\to N\to G\to Q\to 1$ is any short exact sequence of groups, we also have $\psi (G)\leq \psi(N)+\psi(Q)$.

\subsection{Subgroups of $\PGL_2(\C)$ and $\GL_2(\C)$}

\begin{lemma} \label{lemma: finite subgroups of PGL2 fixing a point are cyclic}
Let $H$ be a finite subgroup of $\Aut(\p^1)$ fixing a point. Then $H$ is cyclic.
\end{lemma}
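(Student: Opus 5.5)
We plan to reduce the statement to a fact about finite subgroups of the multiplicative group $\C^*$. Since $\Aut(\p^1)=\PGL_2(\C)$ acts transitively on $\p^1$, after conjugating $H$ we may assume that the fixed point is $\infty=[1:0]$. The stabiliser of $\infty$ in $\PGL_2(\C)$ is the affine group $\Aff_1(\C)=\{z\mapsto az+b \mid a\in\C^*,\ b\in\C\}$, so $H$ becomes a finite subgroup of $\Aff_1(\C)$.

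Next we use the homomorphism $\chi\colon\Aff_1(\C)\to\C^*$, $(z\mapsto az+b)\mapsto a$. Its kernel is the group of translations $z\mapsto z+b$, which is isomorphic to $(\C,+)$ and hence torsion-free. Because $H$ is finite, $H\cap\ker\chi$ is a finite subgroup of a torsion-free group, so it is trivial. Thus $\chi$ restricts to an injection of $H$ into $\C^*$. Every finite subgroup of $\C^*$ is a group of roots of unity, namely $\mu_n$ for $n=|H|$, and is therefore cyclic. Hence $H$ is cyclic.

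As a sanity check, and as an alternative route, one can average over $H$: a finite subgroup of $\Aff_1(\C)$ fixes the barycentre of any of its orbits in $\C$. Conjugating by a translation then puts $H$ inside $\{z\mapsto az\}\simeq\C^*$, which gives the same conclusion. There is no real obstacle here. The only point that needs care is that the translation subgroup has no nontrivial torsion, which holds because we work in characteristic zero.
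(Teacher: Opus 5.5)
Your proof is correct and is essentially the paper's argument written in coordinates. Your character $\chi$ is, up to inversion, the tangent representation $H\to\GL(T_\infty\p^1)\simeq\C^*$ at the fixed point: in the chart $w=1/z$ the map $z\mapsto az+b$ has derivative $a^{-1}$ at $w=0$. Your observation that the translation subgroup is torsion-free is exactly the paper's claim that a finite-order automorphism fixing $p$ with trivial differential is the identity. The paper simply asserts this, whereas you verify it directly.
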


\begin{proof}
Let $p$ be this fixed point. The tangent representation
\[ H \longrightarrow \GL(T_p\p^1)\simeq\C^* \]
is faithful, since a finite-order automorphism fixing $p$ and acting
trivially on $T_p\p^1$ is the identity. Hence $H$ is cyclic.
\end{proof}

\begin{lemma} \label{lemma: size orbits for the actions of A4, S4, and A5 on P1}
Let $G \subseteq \Aut(\p^1)$ be isomorphic to $\Alt_4$, $\Sym_4$, or $\Alt_5$. Then the cardinality of every $G$-orbit belongs respectively to
\[ \{4,6,12\},\qquad \{6,8,12,24\},\qquad \{12,20,30,60\}.\]
\end{lemma}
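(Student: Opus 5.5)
The plan is to use the orbit--stabiliser theorem together with Lemma~\ref{lemma: finite subgroups of PGL2 fixing a point are cyclic}. For any point $p\in\p^1$, the orbit $G\cdot p$ has cardinality $|G|/|G_p|$. The stabiliser $G_p$ is a finite subgroup of $\Aut(\p^1)$ fixing $p$, so by Lemma~\ref{lemma: finite subgroups of PGL2 fixing a point are cyclic} it is cyclic. Hence orbit sizes are exactly the indices $|G|/|C|$ with $C$ a cyclic subgroup of $G$ that occurs as a point stabiliser. The trivial group always occurs, since a nontrivial element of $\PGL_2(\C)$ has only finitely many fixed points (at most two). This gives the generic orbit sizes $12$, $24$ and $60$ respectively.

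Next I list the cyclic subgroups. Their orders are $1,2,3$ for $\Alt_4$; $1,2,3,4$ for $\Sym_4$; and $1,2,3,5$ for $\Alt_5$. The corresponding candidate orbit sizes are
\[
\{12,6,4\},\qquad \{24,12,8,6\},\qquad \{60,30,20,12\}.
\]
These are exactly the claimed sets. So every orbit size lies in the stated set, which is what the lemma asserts; I do not need to show that every value is attained.

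The only subtle point is already handled: a stabiliser could a priori be a noncyclic subgroup, for instance the Klein four-group in $\Alt_4$, or $\Sym_3$ or $D_4$ in $\Sym_4$. Such a subgroup would produce an orbit size outside the list, such as $3$. Lemma~\ref{lemma: finite subgroups of PGL2 fixing a point are cyclic} excludes this, since a stabiliser fixes $p$ and must therefore be cyclic. So the whole argument reduces to this observation plus the elementary enumeration of element orders in the three groups. I do not expect a real obstacle.
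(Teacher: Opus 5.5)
Your proposal is correct and follows the paper's proof: orbit--stabiliser, cyclicity of point stabilisers via Lemma~\ref{lemma: finite subgroups of PGL2 fixing a point are cyclic}, and the orders of the cyclic subgroups of $\Alt_4$, $\Sym_4$ and $\Alt_5$. Your remark that the trivial stabiliser actually occurs is correct but not needed, since the lemma only claims that every orbit size lies in the stated sets.
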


\begin{proof}
Let $G\subseteq\Aut(\p^1)$ be a finite subgroup. For any point $p\in\p^1$, its $G$-orbit has $|G|/|G_p|$ elements, where $G_p$ denotes the stabilizer of $p$. By Lemma~\ref{lemma: finite subgroups of PGL2 fixing a point are cyclic}, $G_p$ is cyclic. The result now follows since the cyclic subgroups of $\Alt_4$ (resp.\ $\Sym_4$, resp.\ $\Alt_5$) have cardinality $1,2$ or $3$ (resp. $1,2,3$ or $4$, resp. $1,2,3$ or $5$).
\end{proof}

\begin{remark} All these orbit lengths are attained. Since we will not use this fact, we omit the straightforward verification. \end{remark}

The following elementary fact will be used repeatedly throughout the article.

\begin{lemma} \label{lemma: psi(PGL_2(C))=3}
We have $\psi ( \PGL_2 (\C ) ) =3$. Moreover a solvable subgroup of $\PGL_2 (\C)$ has derived length $3$ if and only if it is isomorphic to $\Sym_4$.
\end{lemma}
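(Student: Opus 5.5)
I would classify solvable subgroups $G\subseteq\PGL_2(\C)$ according to their Zariski closure $\overline{G}$, which is itself solvable because $G$ is. Since $\ell(G)=\ell(\overline{G})$ for linear algebraic groups, it suffices to bound the derived length of solvable algebraic subgroups of $\PGL_2(\C)$. Note that $D^i\overline{G}=\overline{D^iG}$, since commutator subgroups of Zariski-dense subgroups are Zariski-dense in the commutator subgroup. Alternatively, I can work directly with $G$ via its action on $\p^1$, and I will take that route since it fits Lemma~\ref{lemma: finite subgroups of PGL2 fixing a point are cyclic}.

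The plan is a case split on the identity component $\overline{G}^{\circ}$, which is a connected solvable group.

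\textbf{Case $\overline{G}^\circ$ nontrivial.} This component is a nontrivial connected solvable subgroup of $\PGL_2$, so it is a torus, a unipotent group, or conjugate into the Borel subgroup. Its fixed-point set in $\p^1$ is nonempty, and it consists of one or two points. This set is normalised by $G$.
\begin{itemize}
\item If it is a single point, $G$ fixes it. Then $G$ lies in $\Aff_1(\C)=\C\rtimes\C^*$, which is metabelian, so $\ell(G)\le 2$.
\item If it consists of two points, $G$ preserves a pair of points. Then $G$ lies in $\C^*\rtimes \Z/2$, which again has derived length $\le 2$.
\end{itemize}

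\textbf{Case $\overline{G}$ finite.} Here $G$ is finite. I use the classical classification of finite subgroups of $\PGL_2(\C)$: cyclic, dihedral, $\Alt_4$, $\Sym_4$, $\Alt_5$. The solvable ones have derived length $1$, $\le 2$, $2$ and $3$ respectively. Only $\Sym_4$ reaches $3$, since $\Sym_4\supset\Alt_4\supset V_4\supset 1$.

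In fact the finite case can also be absorbed into the preceding argument. If $G$ fixes a point, it is metabelian. If it preserves a pair of points, then $\ell\le2$. Otherwise one needs the polyhedral groups anyway, so I will simply cite the classification.

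\textbf{Conclusion.} Combining the cases gives $\psi(\PGL_2(\C))\le 3$, with equality attained by the octahedral $\Sym_4\subseteq\PGL_2(\C)$. For the converse of the characterisation: if $\ell(G)=3$, the infinite-closure case is excluded, so $G$ is finite. Among the solvable finite subgroups, only $\Sym_4$ has derived length $3$.

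The main subtlety is the reduction from an arbitrary (possibly non-closed, infinite) solvable $G$ to its Zariski closure. This requires the standard facts that the closure of a solvable group is solvable of the same derived length, and that a connected solvable group fixes a point of $\p^1$ by Borel's fixed point theorem. Both are classical, and I would simply quote them.
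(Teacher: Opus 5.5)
Your proof is correct and follows essentially the paper's route: pass to the Zariski closure, then use the classification of algebraic subgroups of $\PGL_2(\C)$ (point-stabiliser, normaliser of a torus, or finite polyhedral group). The only difference is that you derive the infinite part of that classification yourself, by applying Borel's fixed point theorem to $\overline{G}^{\circ}$, rather than citing it; note that on your route you only need $\overline{G}$ to be solvable, not $\ell(G)=\ell(\overline{G})$, since you bound $\ell(G)$ directly via the $G$-invariant fixed-point set of $\overline{G}^{\circ}$.
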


\begin{proof}
For any solvable subgroup of $\PGL_2 (\C)$, its Zariski closure is solvable with the same derived length. Moreover, up to conjugation, each algebraic subgroup $H$ of  $\PGL_2 (\C)$ satisfies one of the following assertions (see e.g. \cite[Theorem 1]{Nguyen-vanDerPut-Top2008}):
\begin{enumerate}[$($i$)$]
\item \label{the whole group}
$H = \PGL_2 (\C)$;
\item \label{subgroup of the upper-triangular matrices}
$H$ is a subgroup of the upper-triangular matrices;
\item \label{diagonal and antidiagonal matrices}
$H = \{  \smat{a }{0}{0}{b}, \; a,b \in \C^* \} \cup \{  \smat{0 }{b}{a}{0}, \; a,b \in \C^* \}$;
\item \label{a finite group}
$H$ is either equal to $D_{2n}$ (the dihedral group of order $2n$), or $\Alt_4$ (the tetrahedral group), or $\Sym_4$ (the octahedral group), or $\Alt_5$ (the icosahedral group).
\end{enumerate}
In case \eqref{the whole group}, the group is not solvable. In cases \eqref{subgroup of the upper-triangular matrices} and \eqref{diagonal and antidiagonal matrices}, it is solvable of derived length at most $2$. In case \eqref{a finite group}: the group $\Alt_5$ is not solvable, the groups $D_{2n}$ and $\Alt_4$ are solvable of derived lengths at most $2$. Finally, the group $\Sym_4$ is solvable of derived length~$3$. The conclusion follows.
\end{proof}

\begin{remark} \label{remark: C(y) embedding in C}
The same statement also holds with $\C$ replaced by $\C(y)$. Indeed, there exists an abstract field embedding
$\C(y) \hookrightarrow \C$: an algebraic closure of $\C(y)$ and $\C$ are algebraically closed fields of characteristic zero with the same transcendence degree over $\Q$, and hence are isomorphic. Thus $\PGL_2(\C(y))\hookrightarrow\PGL_2(\C)$, so every solvable subgroup of $\PGL_2(\C(y))$ has derived length at most $3$, with equality if and only if it is isomorphic to $\Sym_4$.
\end{remark}

The following result is classical (see e.g.~\cite{Beauville2010}).

\begin{lemma} \label{lemma: finite subgroups of PGL_2(C)}
Any finite subgroup of~$\PGL_2 (\C)$ is isomorphic to one of $\Z /n \Z$, $D_{2n}$ $($the dihedral group of order $2n$$)$, $\Alt_4$, $\Sym_4$, or $\Alt_5$, and there is only one conjugacy class for each of these groups.
\end{lemma}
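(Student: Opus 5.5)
The plan is to deduce the classification from the structure of algebraic subgroups of $\PGL_2(\C)$ already used in the proof of Lemma~\ref{lemma: psi(PGL_2(C))=3}, together with the orbit-stabiliser argument of Lemma~\ref{lemma: finite subgroups of PGL2 fixing a point are cyclic}. Let $H\subseteq\PGL_2(\C)$ be finite. Then $H$ is an algebraic subgroup, so up to conjugation it falls into one of the four cases of \cite[Theorem 1]{Nguyen-vanDerPut-Top2008}. Case (i) is impossible since $H$ is finite.

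\textbf{Case (ii).} Suppose $H$ lies in the upper-triangular group. Then $H$ fixes the point $\infty\in\p^1$, so $H$ is cyclic by Lemma~\ref{lemma: finite subgroups of PGL2 fixing a point are cyclic}. Conjugating by a translation, I would diagonalise a generator. A finite cyclic subgroup of order $n$ is then conjugate to $\langle z\mapsto \zeta_n z\rangle$, since any element of order $n$ is diagonalisable with eigenvalue ratio a primitive $n$-th root of unity. Replacing the generator by a suitable power, I may take the ratio to be $\zeta_n$ itself, which gives uniqueness of the conjugacy class.

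\textbf{Case (iii).} Suppose $H$ lies in the normaliser $N$ of the diagonal torus $T$. Then $H\cap T$ is a finite subgroup of $T\simeq\C^*$, hence cyclic, equal to $\mu_n$ acting by $z\mapsto\zeta z$. If $H\subseteq T$ we are back in the cyclic case. Otherwise $H$ contains an element $z\mapsto c/z$. Conjugating by a diagonal matrix normalises $c=1$, and $H=\langle z\mapsto\zeta_n z,\ z\mapsto 1/z\rangle\simeq D_{2n}$. This gives a unique class for each $n$. The small cases $D_2\simeq\Z/2$ and $D_4\simeq(\Z/2)^2$ need a remark: for $n=1$, $H\simeq\Z/2$ and all involutions are conjugate.

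\textbf{Case (iv).} Here the cited theorem already lists $\Alt_4$, $\Sym_4$, $\Alt_5$ (and $D_{2n}$), each presumably as one group up to conjugacy. This is the main obstacle: confirming uniqueness of the conjugacy class if the reference only gives the isomorphism types. To settle it I would argue via the Riemann–Hurwitz formula for $\p^1\to\p^1/H\simeq\p^1$. The possible signatures of branch data are $(n,n)$, $(2,2,n)$, $(2,3,3)$, $(2,3,4)$, $(2,3,5)$. The cover is determined up to isomorphism by the branch points, which can be normalised to $0,1,\infty$ by $\PGL_2$, together with a generating triple in $H$ up to automorphism. One then checks that these generating triples form a single $\Aut(H)$-orbit for each group. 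Compatibility with Lemma~\ref{lemma: size orbits for the actions of A4, S4, and A5 on P1} provides a consistency check on the orbit sizes.

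Alternatively, I would lift to $\SL_2(\C)$ and use uniqueness of the 2-dimensional faithful representations of the binary polyhedral groups up to the outer automorphism. I expect this uniqueness verification to be the only non-routine step, and the simplest route is to invoke the classical literature (\cite{Beauville2010}).
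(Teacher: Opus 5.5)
Your proposal is correct, but there is no proof in the paper to compare it with. The paper only quotes the lemma as classical and refers to Beauville, so your derivation takes the place of a citation.

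\emph{First layer.} You start from the classification of algebraic subgroups of $\PGL_2(\C)$ used for Lemma~\ref{lemma: psi(PGL_2(C))=3}. That theorem already lists its finite non-cyclic groups up to conjugation, by explicit representatives. So uniqueness of the conjugacy class for $D_{2n}$, $\Alt_4$, $\Sym_4$, $\Alt_5$ is part of what you are citing, and your \emph{main obstacle} only requires reading the reference precisely. In this layer your argument is close to a restatement. The only real work is the cyclic case, via Lemma~\ref{lemma: finite subgroups of PGL2 fixing a point are cyclic} and diagonalisation.

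Note also that case (iii) of that theorem is the infinite group of diagonal and antidiagonal matrices itself, so a finite $H$ never falls into it. Your Case (iii) is best read as a direct normalisation of finite subgroups of the normaliser of the torus. That normalisation is correct, since conjugating $z\mapsto c/z$ by $z\mapsto\lambda z$ gives $z\mapsto\lambda^2c/z$. Formally, though, the dihedral groups reach you through case (iv).

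\emph{Second layer.} Your Riemann-existence argument is a genuinely independent proof of uniqueness, and the step you left as \emph{one then checks} does go through. Two subgroups $H,H'\subseteq\PGL_2(\C)$ are conjugate exactly when the quotient maps $\p^1\to\p^1/H$ and $\p^1\to\p^1/H'$ are isomorphic covers, because such an isomorphism conjugates one deck group onto the other. Since $\Aut(H)$ acts freely on generating tuples, it suffices to show that the ordered generating triples $(a,b,c)$ with $abc=1$ and the prescribed orders number exactly $|\Aut(H)|$. This holds in every case:
\begin{itemize}
\item for $\Alt_4$ with signature $(2,3,3)$ there are $8\cdot 3=24$ triples;
\item for $\Sym_4$ with $(2,3,4)$ there are $6\cdot 4=24$;
\item for $\Alt_5$ with $(2,3,5)$ there are $2\cdot 60=120$, by the character-sum formula for each of the two classes of $5$-cycles;
\item for $D_{2n}$ with $(2,2,n)$ and $n\geq 3$ there are $n\,|(\Z/n\Z)^\times|$;
\item for the Klein group with $(2,2,2)$ there are $6$.
\end{itemize}
In each case the count equals the order of the automorphism group. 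The paper's citation buys brevity. Your second layer buys a proof of the uniqueness part that does not depend on how a classification stated up to conjugacy is worded.
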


\begin{lemma} \label{lemma: A solvable subgroup of derived length 4 of GL_2(C) contains the subgroup {I,-I}}
A solvable subgroup of derived length $4$ of $\GL_2 (\C)$ contains the subgroup $\{ \pm I \}$, where $I$ denotes the identity matrix of $\GL_2 (\C)$.
\end{lemma}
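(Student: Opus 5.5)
Let $G\subseteq\GL_2(\C)$ be solvable with $\ell(G)=4$, and let $\pi\colon\GL_2(\C)\to\PGL_2(\C)$ be the projection, whose kernel is the group of scalar matrices $\C^*I$. The plan is to push $G$ down to $\PGL_2(\C)$, use Lemma~\ref{lemma: psi(PGL_2(C))=3} to identify $\pi(G)$, and then show that an element of $G\cap\C^*I$ of order divisible by $2$ lies in $G$, so that $-I\in G$.

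First, $G\cap\C^*I$ is central and abelian, so $\ell(G)\le\ell(\pi(G))+1$. Since $\ell(G)=4$, this forces $\ell(\pi(G))\ge 3$. By Lemma~\ref{lemma: psi(PGL_2(C))=3}, $\ell(\pi(G))=3$ and $\pi(G)\simeq\Sym_4$, the octahedral group. Moreover $D^3G\neq 1$, while $\pi(D^3G)=D^3\pi(G)=1$. Hence $D^3G$ is a nontrivial subgroup of scalars.

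Next, $D^3G\subseteq D^1G\subseteq\SL_2(\C)$, because commutators have determinant $1$. The only scalar matrices in $\SL_2(\C)$ are $\pm I$. Therefore $D^3G$ is a nontrivial subgroup of $\{\pm I\}$, so $D^3G=\{\pm I\}$, and in particular $-I\in G$.

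The whole argument is short. The only step needing any care is the determinant observation that turns ``nontrivial scalar'' into ``equal to $-I$''. There is no genuine obstacle beyond invoking Lemma~\ref{lemma: psi(PGL_2(C))=3} correctly.
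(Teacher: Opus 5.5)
Your proof is correct and is essentially the paper's argument: project to $\PGL_2(\C)$, then use that commutators lie in $\SL_2(\C)$, whose only scalar matrices are $\pm I$. The one difference is that you look directly at $D^3G$, a nontrivial group of scalars inside $\SL_2(\C)$, instead of at the kernel of $G'\to\Alt_4$. This is slightly more economical, since it only needs $\ell(\pi(G))\le 3$ rather than the identification $\pi(G)\simeq\Sym_4$, and it gives the sharper conclusion $D^3G=\{\pm I\}$.
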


\begin{proof}
Let $G$ be this subgroup. Let $\pi \colon \GL_2 (\C) \to \PGL_2 (\C)$ be the canonical surjection. Since the kernel of $G \to \pi (G)$ consists of scalars, it is abelian, and hence we get $\length ( \pi (G) ) \ge 4-1 = 3$. By Lemma~\ref{lemma: psi(PGL_2(C))=3}, we have $ \pi (G) = \Sym_4$. Since the determinant of a commutator is $1$, the derived subgroup $G'$ of $G$ lies in $\SL_2 (\C)$. Let $\tau \colon \SL_2 (\C) \to \PSL_2 (\C) = \PGL_2 (\C)$ be the canonical surjection. We have $\tau (G') =\pi (G) ' = (\Sym_4)' = \Alt_4$. The kernel of $G' \to \Alt_4$ lies in $\{ \pm I \}$. If it were trivial, then we would have $G' \simeq \Alt_4$, hence $\length (G') = 2$, hence $\length (G) = 3$. A contradiction. Hence the kernel of $G' \to \Alt_4$ is $\{ \pm I \}$ and this concludes the proof.
\end{proof}

\subsection{Maximal algebraic subgroups of the Cremona group}

We shall use the following two results of Blanc, which will play a major role in our arguments. They can be found in \cite[Theorems~1 and~2]{Blanc2009}. We first recall two definitions from \cite[\S2.4 and \S2.5]{Blanc2009} that will be needed to state Theorem~\ref{theorem: the eleven families of maximal algebraic subgroups of Bir(P2)}.

\begin{definition} \label{definition: exceptional conic bundle}
A conic bundle $(S, \pi)$ with $2n$ singular fibres is said to be exceptional if it admits at least two sections with self-intersection $-n$ for some $n \ge 1$. It is then a fact that the number of these sections is exactly two, and that these two sections are disjoint. Moreover, for each singular fibre $E_1 + E_2$, the component $E_1$ meets one of the sections, and the component $E_2$ the other one. These properties are for example stated in \cite[Lemma 4.3.1]{Blanc2009}.
\end{definition}

\begin{definition} \label{definition: (Z/2Z)^2-conic bundle}
A conic bundle $\pi \colon S \to \p^1$ is called a
$(\Z /2 \Z)^2$-conic bundle if
\[ \Aut(S/ \p^1)\simeq (\Z / 2 \Z)^2 \]
and each of its three nontrivial involutions $\sigma$ fixes pointwise an irreducible curve $C_{\sigma}$ which is a double cover of $\p^1$, via $\pi$, branched over a nonempty set $A_{\sigma}$ of even cardinality.
\end{definition}

\begin{theorem}[Blanc] \label{theorem: each algebraic subgroup is contained in a maximal algebraic subgroup}
Every algebraic subgroup of the Cremona group is contained in a maximal algebraic subgroup.
\end{theorem}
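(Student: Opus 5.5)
The statement is Blanc's theorem (\cite[Theorem~1]{Blanc2009}). The plan is to reduce to automorphism groups of minimal rational surfaces via regularisation, and then to climb to maximal groups using a finiteness argument coming from the equivariant minimal model program.

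\textbf{Regularisation.} Let $G\subseteq\bp$ be an algebraic subgroup, that is, a subgroup of bounded degree that is closed in the Zariski topology. By Weil's regularisation theorem (in the form used by Blanc), there is a smooth projective rational surface $S$ and a birational map $\phi\colon S\dashrightarrow\p^2$ such that $\phi^{-1}G\phi\subseteq\Aut(S)$ acts biregularly.

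\textbf{Equivariant minimal model program.} We contract $G$-orbits of disjoint $(-1)$-curves until the pair $(S,G)$ is $G$-minimal. By the equivariant version of Manin--Iskovskikh, the result is one of the following:
\begin{enumerate}[$($i$)$]
\item a del Pezzo surface with $\rk\Pic(S)^G=1$;
\item a conic bundle $S\to\p^1$ with $\rk\Pic(S)^G=2$.
\end{enumerate}
Hence $G$ is conjugate to a subgroup of $\Aut(S)$ with $S$ $G$-minimal. Since $\Aut(S)$ is itself an algebraic subgroup, it is enough to show that $\Aut(S)$ lies in a maximal algebraic subgroup.

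\textbf{Climbing to a maximal group.} Pass from $G$ to $G_1=\Aut(S)$, which is again algebraic. Apply the same procedure to $G_1$ to obtain a $G_1$-minimal surface $S_1$ with $G_1\subseteq\Aut(S_1)$, and iterate. This produces an increasing chain $G\subseteq\Aut(S)\subseteq\Aut(S_1)\subseteq\cdots$ of algebraic subgroups, each conjugate to the full automorphism group of a minimal pair.

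\textbf{Main obstacle: termination.} The hardest step is to show that this chain stabilises at a group that is maximal among all algebraic subgroups, not merely among those obtained this way. The strategy has two parts.
\begin{enumerate}[$($a$)$]
\item Use the equivariant Sarkisov program. Whenever $\Aut(S)\subsetneq H$ with $H$ algebraic, the birational map relating $S$ to a minimal model of $H$ decomposes into $\Aut(S)$-equivariant Sarkisov links.
\item Study these links case by case for del Pezzo surfaces and conic bundles. This mirrors Blanc's analysis: a link from $S$ to $S'$ making $\Aut(S)$ strictly smaller than $\Aut(S')$ must strictly increase a suitable invariant. For del Pezzo surfaces, the invariant is $K_S^2$ or the size of the orbits of base points. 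For conic bundles, it is the decrease in the number of singular fibres, or moving to a Hirzebruch surface with smaller $n$, or to $\p^2$ or $\p^1\times\p^1$.
\end{enumerate}
The invariant takes values in a well-ordered bounded set (we have $K^2\le 9$ and the number of singular fibres is nonnegative), so any chain is finite. Its final member $\Aut(S_k)$ admits no equivariant link to a surface with strictly larger automorphism group. By part (a), it is therefore maximal.

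Finally, the difficulty also includes checking that the chain cannot be infinite through non-minimal enlargements. This follows because every algebraic overgroup is itself regularisable, hence of the above form.
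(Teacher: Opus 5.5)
The paper gives no proof of this statement; it quotes it from \cite[Theorem~1]{Blanc2009}. Your first two steps (regularisation, then the equivariant minimal model program producing a $G$-minimal del Pezzo surface or conic bundle) are the standard start. The rest of the sketch has a genuine gap, and it sits exactly where the content of Blanc's theorem lies.

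To begin with, your climbing iteration is empty. If $S$ is a $G$-minimal del Pezzo surface, then $\rk\Pic(S)^{\Aut(S)}\le\rk\Pic(S)^G=1$, so $S$ is already $\Aut(S)$-minimal and $S_1=S$. If $S$ is a conic bundle, $\Aut(S)$ need not be algebraic at all. For example, rational elliptic surfaces carry conic bundle structures and can have automorphism groups of unbounded degree. So you must use $\Aut(S,\pi)$, for which $S$ is again already minimal. Your chain therefore stabilises after at most one step for purely formal reasons, and nothing about maximality has been gained.

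The termination paragraph thus carries the whole proof, and it is an assertion rather than an argument. You never define a single invariant. The claim that a group-enlarging link must increase it is not proved; its content is precisely the case-by-case analysis of minimal pairs that makes up most of Blanc's paper.

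Even granting that claim, your final inference fails. Knowing that $\Aut(S_k)$ admits no equivariant \emph{link} to a surface with strictly larger automorphism group does not give maximality. A Sarkisov factorisation towards a model of a hypothetical overgroup $H$ may start with links $S_k\dashrightarrow S'$ under which the image of $\Aut(S_k)$ is all of $\Aut(S')$. Your hypothesis says nothing about the invariant along such links, and only a later link need enlarge the group.

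Part (a) is also inaccurate as stated. A minimal model for $H$ need not be a Mori fibre space for the smaller group $\Aut(S)$. For instance, $S_6$ is a minimal del Pezzo surface for $\Aut(S_6)$, but has invariant Picard rank $2$ for the stabiliser of one of its conic bundle structures. One must therefore first run the $\Aut(S)$-equivariant MMP on that model, after which $H$ no longer acts.

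What is really needed is control of all equivariant birational maps out of each candidate pair, not of single links. For every minimal pair outside the list, one must exhibit an explicit equivariant birational map conjugating its group into a strictly larger group from the list. For every pair in the list, one must show that no equivariant birational map, through any sequence of links and after handling the non-minimality issue above, conjugates its group onto a proper subgroup of an algebraic group. Blanc carries this out family by family using the classification of equivariant links; your monotone-invariant heuristic does not replace it.
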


\begin{theorem}[Blanc's classification] \label{theorem: the eleven families of maximal algebraic subgroups of Bir(P2)}

Up to conjugacy in $\bp$, the maximal algebraic subgroups are
of the following eleven types:
\begin{enumerate}
\item[(1)] $\Aut(\p^2)$;

\item[(2)] $\Aut(\p^1\times\p^1)$;

\item[(3)] $\Aut(S_6)$, where $S_6$ is the del Pezzo surface of degree $6$;

\item[(4)] $\Aut(\F_n)$, where $\F_n$ is the $n$-th Hirzebruch surface and $n \geq 2$;

\item[(5)] $\Aut(S,\pi)$, where $(S,\pi)$ is an exceptional conic bundle
with at least four singular fibres;

\item[(6)] $\Aut(S_5)$, where $S_5$ is the del Pezzo surface of degree $5$;

\item[(7)] $\Aut(S)$, where $S$ is a del Pezzo surface of degree $4$;

\item[(8)] $\Aut(S)$, where $S$ is either a cyclic cubic surface, the
Clebsch cubic surface, or belongs to a one-parameter family of smooth
cubic surfaces whose automorphism groups are isomorphic to $\Sym_4$;

\item[(9)] $\Aut(S)$, where $S$ is a del Pezzo surface of degree $2$,
a double cover of $\p^2$ branched over a smooth quartic $Q_S$, such
that $\Aut(Q_S)$ acts without fixed points on the complement of the
bitangency points;

\item[(10)] $\Aut(S)$, where $S$ is a del Pezzo surface of degree $1$;

\item[(11)] $\Aut(S,\pi)$, where $(S,\pi)$ is a
$(\Z/2\Z)^2$-conic bundle and $S$ is not a del Pezzo surface.
\end{enumerate}
\end{theorem}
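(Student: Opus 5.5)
Since this is Blanc's theorem \cite[Theorem~2]{Blanc2009}, we would follow his strategy, which rests on regularisation and the equivariant minimal model program. Let $G\subseteq\bp$ be a maximal algebraic subgroup. First, an algebraic subgroup of $\bp$ is regularisable. Concretely, there is a smooth projective rational surface $X$ and a birational map $\phi\colon X\dashrightarrow\p^2$ such that $\phi^{-1}G\phi\subseteq\Aut(X)$ and acts regularly. One gets this by resolving the indeterminacies of a bounded family and using Weil's regularisation theorem. Next we run the $G$-equivariant minimal model program on $X$, contracting $G$-orbits of disjoint $(-1)$-curves. The result is a $G$-minimal surface, which by the Manin--Iskovskikh dichotomy is one of two kinds: either a del Pezzo surface with $\rk\Pic(S)^G=1$, or a conic bundle $\pi\colon S\to\p^1$ with $\rk\Pic(S)^G=2$ and $G\subseteq\Aut(S,\pi)$. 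By maximality, $G$ equals $\Aut(S)$ in the first case and $\Aut(S,\pi)$ in the second. Both groups are algebraic; in the conic bundle case this holds because $\Aut(S,\pi)$ is an extension of a subgroup of $\PGL_2$ by $\Aut(S/\p^1)$, which has bounded degree.

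The core of the proof is deciding which of these groups are actually maximal. This means deciding when $\Aut(S)$ or $\Aut(S,\pi)$ is conjugate to a \emph{proper} subgroup of another such group. For this we use the equivariant Sarkisov program: any $G$-equivariant birational map between $G$-minimal surfaces factors into elementary $G$-links. One must then show two things.
\begin{enumerate}
\item[(a)] Groups not on the list are non-maximal. The group always has an orbit whose blow-up or contraction gives a link to a surface with a strictly larger automorphism group. Examples are $\F_1$ and $\F_0$ (inside $\p^2$ and $\p^1\times\p^1$), del Pezzo surfaces of degree $\geq 7$ other than $\p^2$ and $\p^1\times\p^1$, cubic surfaces with a fixed point, and degree-$2$ surfaces whose group fixes a point off the bitangency points. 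For conic bundles with few singular fibres, or conic bundles that are del Pezzo, one finds a link to one of the families (1)--(10).
\item[(b)] Groups on the list are maximal. For each family, the full group $\Aut(S)$ (respectively $\Aut(S,\pi)$) admits no link to a surface where it sits inside a bigger group. We check this by computing orbit sizes of $\Aut(S)$, since a link would require an orbit of fewer than $K_S^2$ points in general position, and by using the rigidity of exceptional and $(\Z/2\Z)^2$-conic bundles, whose invariant sections or fixed curves are preserved by every link.
\end{enumerate}

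We expect step (b) to be the main obstacle, and within it the degree-$3$ and degree-$2$ cases. There one has to pin down exactly which automorphism groups have orbits small enough to start a Sarkisov link. This gives the explicit conditions in (8) and (9): the cyclic cubic, the Clebsch cubic, the $\Sym_4$-family, and the requirement that $\Aut(Q_S)$ acts without fixed points off the bitangency points. Settling these requires an explicit case-by-case analysis of the automorphism groups of cubic and quartic del Pezzo surfaces using Dolgachev's tables. We would handle this by listing, for each group, its orbits of length $\leq 2$ (respectively $\leq 1$) in general position, and verifying that every such orbit yields either a link back to an isomorphic surface or a strictly larger group.
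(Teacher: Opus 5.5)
The paper gives no proof of this statement: it is quoted from Blanc \cite{Blanc2009}. Your framework (regularisation, equivariant MMP, Sarkisov links) is Blanc's framework. However, two parts of your outline would not go through as written.

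\textbf{Degree $2$.} Your step (a) rests on a link that does not exist.
Since $\Aut(S)$ always contains the Geiser involution, every $\Aut(S)$-fixed point $p$ lies on the ramification curve $R$ of the anticanonical double cover $S\to\p^2$. For $p\in R$, the preimage of the tangent line to $Q_S$ at the image of $p$ is an anticanonical curve singular at $p$. Its strict transform in $\Bl_pS$ therefore has anticanonical degree $0$, so $\Bl_pS$ is not del Pezzo. This holds whether or not $p$ lies over a bitangency point.

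Every link from a minimal del Pezzo surface begins with the blow-up of an orbit $\Omega$ such that $\Bl_\Omega S$ is del Pezzo; in particular $|\Omega|<K_S^2=2$. Hence no link at all starts from $(S,\Aut(S))$. Your own criterion in (b) then gives superrigidity, and hence maximality, for every del Pezzo surface of degree $2$. This includes a general one, where $\Aut(S)=\Z/2\Z$ and $\Aut(Q_S)$ is trivial. So the restriction stated in (9) cannot come out of your argument; it is actually in tension with it. You should check that item against Blanc's original formulation before trying to prove it.

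\textbf{Conic bundles.} Here a step is missing entirely.
Your (a) only disposes of conic bundles with few singular fibres or with del Pezzo total space, and (b) only examines families (5) and (11). You still need to show the following: if $(S,\pi)$ is $\Aut(S,\pi)$-minimal, $S$ is not del Pezzo, and $\Aut(S,\pi)$ is maximal, then $(S,\pi)$ is $\F_n$, exceptional, or a $(\Z/2\Z)^2$-conic bundle. Minimal conic bundles with many singular fibres and small fibrewise group $\Aut(S/\p^1)$ are not treated anywhere in your outline. In Blanc's proof this is where the work lies. One analyses $\Aut(S/\p^1)$ (the infinite case leads to Hirzebruch surfaces and exceptional bundles) and which elements can twist a singular fibre. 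It is this structural analysis, not orbit counting, that produces families (5) and (11).

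Two smaller points:
\begin{itemize}
\item The reason these two families admit no links is not that sections are preserved. It is that every finite orbit contains two points of one fibre, because of the fibrewise involution exchanging the two sections, resp.\ the fibrewise Klein group. So no elementary transformation exists, while links of type III or IV are excluded since $S$ is not del Pezzo.
\item Step (b) uses without proof that the absence of links from $(S,\Aut(S))$ implies maximality. This needs an argument: regularise a hypothetical larger algebraic group $G'$ on a $G'$-Mori fibre space, run the $\Aut(S)$-equivariant MMP there, and apply the Sarkisov program.
\end{itemize}
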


\begin{remark}\label{remark: which automorphism groups from the theorem of J. Blanc are finite}
The algebraic groups in
Theorem~\ref{theorem: the eleven families of maximal algebraic subgroups of Bir(P2)}
are infinite in cases~(1)--(5) and finite in cases~(6)--(11).
This follows directly from the descriptions in Blanc's classification.
For cases~(6)--(10), one may also use the classical fact that the
automorphism group of a smooth complex del Pezzo surface of degree $d$
is finite if and only if $d\leq 5$; see for instance
\cite[p.~2]{DolMar2024}.
In case~(11), finiteness follows from the structure of
$\Aut(S,\pi)$ recalled in Case~(11) below.
\end{remark}

\subsection{Finite solvable subgroups}

The goal of this section is to prove
Theorem~\ref{theorem: psi_{finite}(Bir(P2)) =4}.
Blanc's classification reduces the finite case to the eleven automorphism groups listed in Theorem~\ref{theorem: the eleven families of maximal algebraic subgroups of Bir(P2)}. For the first of these, $\Aut(\p^2)=\PGL_3(\C)$, we will use the classical classification of finite subgroups of $\PGL_3(\C)$ due to Blichfeldt \cite{Blichfeldt1917}. We begin by recalling the terminology that we need.

A finite subgroup $F\subseteq\PGL_3(\C)$ is called
\textit{intransitive} if it fixes a point of $\p^2$, and
\textit{transitive} otherwise. A transitive subgroup is called
\textit{imprimitive} if it preserves a triangle, i.e.\ a set of three
non-collinear points of $\p^2$, and \textit{primitive} otherwise.

If $F$ is intransitive and fixes $p\in\p^2$, let $\widetilde F\subseteq \SL_3(\C)$ be the inverse image of $F$ by the canonical surjection $\SL_3(\C) \to \PSL_3 (\C) = \PGL_3 (\C)$.
The point $p$ corresponds to a one-dimensional $\widetilde F$-invariant subspace of $\C^3$. Since $\widetilde F$ is finite, complete reducibility gives a $\widetilde F$-invariant complement, and hence an $F$-invariant line $D \subseteq \p^2$ not containing $p$. Therefore, after conjugation, we may assume that
\[ p=[1:0:0], \qquad D =\{x=0\}, \]
and identify $F$ with a finite subgroup $G\subseteq\GL_2(\C)$ via
\[ A \longmapsto \bigl([x:y:z]\longmapsto[x:A(y,z)]\bigr). \]

\begin{proof}[Proof of Theorem~\ref{theorem: psi_{finite}(Bir(P2)) =4}]
Let $F\subseteq\bp$ be a finite solvable subgroup. By Theorems~\ref{theorem: each algebraic subgroup is contained in a maximal algebraic subgroup} and \ref{theorem: the eleven families of maximal algebraic subgroups of Bir(P2)},  after birational conjugation we may assume that $F$ is contained in one of the eleven maximal algebraic groups listed there. 

\smallskip

\noindent\textbf{Case~(1): $\Aut(\p^2)=\PGL_3(\C)$.}

If $F$ is intransitive, the discussion above identifies $F$, after
conjugation, with a finite subgroup of $\GL_2(\C)$. Hence
\[ \ell(F)\leq\psi(\GL_2(\C))=4. \]

Suppose next that $F$ is transitive and imprimitive. Then $F$ preserves
a triangle, and there is a short exact sequence
\[
1\longrightarrow D\longrightarrow F\longrightarrow H\longrightarrow1,
\]
where $D$ is contained in the diagonal torus $(\C^*)^2$ and
$H\subseteq\Sym_3$. Thus
\[ \ell(F) \leq  \psi \bigl( (\C^*)^2 ) + \psi ( \Sym_3) = 1+2=3.
\]

It remains to consider the primitive case. By the classification of
finite primitive subgroups of $\PGL_3(\C)$, the group $F$ is isomorphic
to one of
\[ \Alt_5, \qquad \Alt_6, \qquad \PSL_2(\mathbb \FFin_7),\qquad H_{216}, \]
or to one of two proper subgroups of $H_{216}$; see
\cite[Theorem~4.8, p.~465]{DolgachevIskovskikh2009}. The first three groups are not solvable, so it remains to consider subgroups of the Hessian group
\[ H_{216} =V\rtimes L, \qquad V=(\FFin_3)^2, \qquad L= \SL_2(\FFin_3)\simeq 2T, \] where $L$ acts naturally and irreducibly on $V$ and $2T$ denotes the binary tetrahedral group; see \cite[Proposition~4.1]{ArtebaniDolgachev2009}.

We will prove the following result.

\begin{lemma} \label{lemma: length-four subgroup of H_{216}}
We have $\length( H_{216} )=4$. Moreover, if $F \subseteq H_{216} $ is a subgroup of derived length $4$, then $F=H_{216}$.
\end{lemma}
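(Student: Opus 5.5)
We compute the derived series of $H_{216}=V\rtimes L$ directly, with $V=(\FFin_3)^2$ and $L=\SL_2(\FFin_3)\simeq 2T$. The binary tetrahedral group has derived series
\[ 2T \supset Q_8 \supset \{\pm I\} \supset 1, \]
so $\length(L)=3$. Since $L$ acts irreducibly on $V$, the subgroup $[V,L]$ is a nonzero $L$-submodule of $V$, hence equals $V$. (Indeed, $-I\in L$ acts as $-1$, so $[v,-I]=v^{-2}=v$ in additive notation.) Thus $D^1H_{216}=V\rtimes Q_8$. Repeating with $Q_8$, which also contains $-I$, gives $D^2H_{216}=V\rtimes\{\pm I\}$. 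Then $D^3H_{216}=[V,-I]=V$ and $D^4H_{216}=1$. Hence $\length(H_{216})=4$.

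For the second statement, let $F\subseteq H_{216}$ have derived length $4$, and let $\bar F$ be its image in $L$, with $F\cap V$ abelian. Then $\length(F)\le \length(F\cap V)+\length(\bar F)\le 1+\length(\bar F)$, so $\length(\bar F)\ge 3$. Every proper subgroup of $2T$ is cyclic or $Q_8$, of derived length at most $2$, so $\bar F=L$.

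Next I would show $F\cap V=V$. The subgroup $F\cap V$ is $L$-stable, since $F$ normalizes it and $V$ acts trivially on itself. By irreducibility it is therefore $0$ or $V$. If it were $0$, then $F\simeq \bar F=L$ would have derived length $3$, a contradiction. Hence $V\subseteq F$, and with $\bar F=L$ this gives $F=H_{216}$.

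The only step that needs care is irreducibility. I would take it from the cited description of $H_{216}$, or check it directly: $\SL_2(\FFin_3)$ acts transitively on $V\setminus\{0\}$, so it has no invariant line. The subgroup lists for $2T$ are standard, so nothing else poses a real obstacle.
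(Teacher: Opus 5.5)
Your proof is correct and follows the paper's argument. You compute the derived series via $(V\rtimes H)'=[V,H]\rtimes H'$ using that $-I$ acts as $-1$ on $V$, and for the second part you show $F\to L$ is surjective and use irreducibility of $V$ to force $F\cap V=V$. The only difference is cosmetic: you obtain $\bar F=L$ from the list of proper subgroups of $2T$ (cyclic or $Q_8$), while the paper derives it from the extension $1\to\{\pm I\}\to L\to\Alt_4\to 1$, which amounts to the same fact.
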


\begin{proof}
1) We begin by proving that $\length ( H_{216} ) =4$. Recall that $L \simeq 2T = Q_8 \rtimes \Z / 3 \Z$, where $Q_8$ is the Quaternion group, and that the derived series of $L$ is
\[ L  \; \triangleright \;  Q_8  \;  \triangleright \;  \{ \pm I \}  \;  \triangleright  \; 1. \]
Let us prove that the derived series of $H_{216} =  V \rtimes L$ is
\begin{equation} H_{216} = V \rtimes L \; \triangleright \;  V \rtimes Q_8 \; \triangleright \;  V \rtimes  \{ \pm I \}  \;  \triangleright \;  V \;   \triangleright \;  1. \label{equation: derived series of V rtimes L} \end{equation}
Since $V$ is abelian, for every subgroup $H \subseteq L$, we have
\[ (V \rtimes H)' = [V,H] \rtimes H'\]
where $G'$ denotes the derived subgroup of a group $G$ and where $[V, H]$ denotes the subgroup of $V$ generated by the elements of the form $h(v) -v$, $h \in H$, $v \in V$. Therefore, for showing that \eqref{equation: derived series of V rtimes L} is the derived series of $V \rtimes L$ it is enough to check that we have $[V, L] = [V, Q_8] = [V, \{ \pm I \} ] = V$. Since we have $[V, \{ \pm I \} ]  \subseteq [V, Q_8] \subseteq [V, L] \subseteq V$, it is enough to check that we have $[V, \{ \pm I \}] =V$. The central element $-I$ belongs to $Q_8 \subseteq L = \SL_2 (\FFin_3)$ and acts on $V$ by multiplication by $-1$. Hence, for every $v \in V$,
\[ (-I) (v) - v = -2 v = v,\]
showing that we have $[V, \{ \pm I \}] =V$. This shows that \eqref{equation: derived series of V rtimes L} is the derived series of $V \rtimes L$. Hence we have actually proven that $\ell(H_{216})=4$.

2) We finally prove that if $F \subseteq H_{216}$ satisfies $\ell(F)=4$, then we have $F = H_{216}$. Let $K\subseteq L$ be the projection of $F$. Since $F \cap V$ is abelian, we have $\ell(K)=3$.
This forces $K=L$: indeed, the short exact sequence
\[ 1 \longrightarrow \{ \pm I \} \longrightarrow L \xrightarrow{\hspace{1.5mm} \pi \hspace{1.5mm}  } \Alt_4  \longrightarrow 1 \]
induces the short exact sequence
\[ 1 \longrightarrow \{ \pm I \} \cap K \longrightarrow K \longrightarrow  \pi (K)   \longrightarrow 1 \]
from which we get $\pi (K) = \Alt_4$, $\{ \pm I \} \cap K = \{ \pm I \}$, and finally $K=L$.

Thus $F \cap V$ is an $L$-submodule of the irreducible
module $V$. If $F \cap V=0$, then $F \simeq L$ has derived length $3$, a contradiction. Hence $F \cap V=V$, and therefore $F = H_{216}$.
\end{proof}

We summarize the conclusion of Case~(1) in the following lemma.

\begin{lemma} \label{lemma: finite solvable subgroups of Aut P2}
Let $F\subseteq \Aut(\p^2)$ be a finite solvable subgroup. Then we have
$\ell(F)\leq 4$, and  this bound is attained. Moreover, if
$\ell(F)=4$, then exactly one of the following occurs:
\begin{enumerate}
\item[(1)] $F$ is intransitive, i.e.\ under conjugation
in $\Aut(\p^2)$, it is the image of a finite subgroup of $\GL_2 (\C)$ under
\[ A \mapsto  [ x : A(y,z)] .\]
\item[(2)] Up to conjugation in $\Aut(\p^2)$, we have $F=H_{216}$.
\end{enumerate}
\end{lemma}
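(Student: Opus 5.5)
The proof assembles the three subcases of Case~(1) treated just above, following Blichfeldt's trichotomy: a finite subgroup $F\subseteq\PGL_3(\C)$ is intransitive, transitive imprimitive, or primitive, and I handle each in turn.

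\emph{Intransitive case.} After conjugation, $F$ is the image of a finite subgroup $G\subseteq\GL_2(\C)$ under $A\mapsto[x:A(y,z)]$. This map is injective, so $\ell(F)=\ell(G)\leq\psi(\GL_2(\C))=4$ by the value tabulated from Newman.

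\emph{Imprimitive case.} The exact sequence $1\to D\to F\to H\to 1$, with $D$ diagonal and $H\subseteq\Sym_3$, gives $\ell(F)\leq 3$. So this case never reaches length $4$.

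\emph{Primitive case.} The only solvable primitive groups are $H_{216}$ and two proper subgroups of it. Lemma~\ref{lemma: length-four subgroup of H_{216}} gives $\ell\leq 4$, with equality only for $H_{216}$ itself. Combining the three cases yields $\ell(F)\leq 4$, and if $\ell(F)=4$ then $F$ is either intransitive, giving alternative~(1), or conjugate to $H_{216}$, giving alternative~(2).

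For attainment, $\ell(H_{216})=4$ by Lemma~\ref{lemma: length-four subgroup of H_{216}}, so the bound is realised by $H_{216}\subseteq\Aut(\p^2)$.

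To show that exactly one alternative occurs, I must check that (1) and (2) are mutually exclusive. This is because $H_{216}$ is primitive, hence transitive, so it fixes no point of $\p^2$. Transitivity is preserved under conjugation in $\Aut(\p^2)$, so no conjugate of $H_{216}$ is intransitive.

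The only point needing care is that ``intransitive'' really does mean ``image of a finite subgroup of $\GL_2(\C)$''. This is exactly the complete-reducibility discussion preceding the proof, which I would cite directly. I expect no serious obstacle, since every ingredient has already been established.
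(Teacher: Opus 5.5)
Your proposal is correct and follows the paper's own argument: the intransitive/imprimitive/primitive trichotomy, the bounds $\psi(\GL_2(\C))=4$ and $1+2=3$, and Lemma~\ref{lemma: length-four subgroup of H_{216}} for the solvable primitive groups, with $H_{216}$ giving attainment. Your explicit check that the two alternatives are mutually exclusive (a primitive group is transitive by definition, so it fixes no point) is a small point the paper leaves implicit.
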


\noindent\textbf{Case~(2): $\Aut(\p^1\times\p^1)$.} We have
$\Aut(\p^1\times\p^1)
 =\bigl(\PGL_2(\C)\times\PGL_2(\C)\bigr)\rtimes\Z/2\Z$, where the nontrivial element of $\Z/2\Z$ exchanges the two factors. Hence we have
\[ \psi  \bigl(   \Aut(\p^1\times\p^1)   \bigr)  \le \psi \Big(  \bigl( \PGL_2 ( \C) \bigr)^2  \Big) + \psi ( \Z / 2 \Z) = 3 + 1 = 4,\] 
and thus $\ell(F) \leq 4$.

\smallskip
\noindent\textbf{Case~(3): $\Aut(S_6)$.}
Blanc's description gives
\[ \Aut(S_6)\simeq (\C^*)^2 \rtimes (\Sym_3 \times \Z/2\Z). \]
The first factor is abelian and the second has derived length $2$.
Thus $\ell(F)\leq3$.

\smallskip
\noindent \textbf{Case~(4): $\Aut(\F_n)$, $n\geq2$.} We have
\[ \Aut(\F_n)\simeq \C^{n+1} \rtimes \bigl( \GL_2(\C)/\mu_n \bigr). \]
Since $\C^{n+1}$ has no nontrivial finite subgroup, $F$ embeds in $\GL_2(\C)/\mu_n$. Its inverse image in $\GL_2(\C)$ is solvable, and therefore $\ell(F) \leq \psi \bigl( \GL_2(\C) \bigr) =4$.

\smallskip
\noindent \textbf{Case~(5): exceptional conic bundles.} Let $\Delta \subseteq \p^1$ be the set of points over which $\pi$ has a
singular fibre, and let $H_\Delta \subseteq \PGL_2(\C)$ be the subgroup of automorphisms leaving $\Delta$ invariant. If $s_1,s_2$ are the two exceptional sections, the actions on the base and on the set $\{s_1,s_2\}$ define homomorphisms
\[ \rho\colon\Aut(S,\pi)\longrightarrow H_\Delta, \qquad \sigma\colon\Aut(S,\pi)\longrightarrow\Z/2\Z. \]
By \cite[Lemmas~4.3.1 and~4.3.3]{Blanc2009}, the map
$(\rho, \sigma)$ is surjective and its kernel is a one-dimensional
torus. Thus there is a short exact sequence
\begin{equation} \label{equation: rho and sigma}
1 \longrightarrow T \longrightarrow \Aut(S,\pi) \xrightarrow{(\rho,\sigma)} H_\Delta \times \Z/2\Z \longrightarrow 1, \qquad T \simeq\C^*. \end{equation}
This gives
\[ \ell(F) \le \psi \bigl( \Aut(S,\pi) \bigr) \le \psi (T) + \psi \bigl( H_\Delta \times \Z/2\Z \bigr) \le  \psi (T) + \psi \bigl(  \PGL_2 (\C)  \times \Z/2\Z \bigr) =4. \]

\smallskip
\noindent\textbf{Case~(6): $\Aut(S_5)$.} We have $\Aut(S_5) \simeq \Sym_5$, and therefore
$\ell(F)\leq \psi(\Sym_5)=3$; cf. the table given in the introduction.

\smallskip
\noindent\textbf{Case~(7): del Pezzo surfaces of degree $4$.}
Such a surface is the blow-up of a set $\Delta\subseteq\p^2$ of five points in general position (i.e.\ such that no three of them are collinear), and Blanc's description gives
\[ \Aut(S)\simeq (\Z/2\Z)^4 \rtimes H_S, \]
where $H_S$ is the subgroup of $\PGL_3(\C)$ preserving $\Delta$.
The five points lie on a unique smooth conic, so $H_S$ embeds in $\Aut(\p^1) = \PGL_2(\C)$ and preserves a set of five points on $\p^1$. By Lemma~\ref{lemma: size orbits for the actions of A4, S4, and A5 on P1}, the groups $\Alt_4$, $\Sym_4$, and $\Alt_5$ cannot occur. Hence $H_S$ is cyclic or dihedral  (see Lemma~\ref{lemma: finite subgroups of PGL_2(C)}), and therefore $\ell(H_S)\leq2$. It follows that
\[ \ell(F) \leq \ell \bigl( \Aut(S) \bigr) \leq1+2=3. \]
\smallskip
\noindent\textbf{Case~(8): cubic surfaces.}
We use Blanc's description of the automorphism groups occurring in this family, distinguishing the three types of cubic surfaces listed in Theorem~\ref{theorem: the eleven families of maximal algebraic subgroups of Bir(P2)}.

If $S$ is the Fermat cubic, then
\[ \Aut(S)\simeq(\Z/3\Z)^3\rtimes\Sym_4, \]
and hence every solvable subgroup has derived length at most $4$.

Suppose next that $S$ is a cyclic cubic surface. If $S$ is not the
Fermat cubic, Blanc gives a short exact sequence
\[ 1 \longrightarrow\Z/3\Z\longrightarrow\Aut(S) \longrightarrow H_\Gamma \longrightarrow 1, \]
where $H_\Gamma$ embeds in the automorphism group of a smooth elliptic curve $\Gamma$. Since $\Aut(\Gamma)$ has derived length at most $2$, we obtain $\ell(\Aut(S))\leq3$.

Finally, the Clebsch cubic has automorphism group $\Sym_5$, while the remaining cubic surfaces in family~(8) have automorphism group $\Sym_4$. Thus $\ell(F)\leq 3$ in these last two cases, and hence $\ell(F)\leq 4$ in all cases.

\smallskip
\noindent\textbf{Case~(9): del Pezzo surfaces of degree $2$.}
Let $Q_S\subseteq\p^2$ be the smooth quartic branch curve of the anticanonical double cover $S \to \p^2$. Blanc's description gives
\[ \Aut(S) \simeq \Z/2\Z \times H_S, \qquad H_S=\Aut(\p^2,Q_S)\subseteq\PGL_3(\C). \]
We claim that every solvable subgroup of $H_S$ has derived length at most $3$. Otherwise, let $H\subseteq H_S$ have derived length $4$. By Case~(1), either $H$ is intransitive or $H\simeq H_{216}$.

The latter is impossible: the curve $Q_S$ has genus $3$, so the Hurwitz bound gives
\[ |\Aut(Q_S)|\leq84(3-1)=168<216. \]
Suppose therefore that $H$ is intransitive. After conjugation, $H$ is the image of a finite subgroup $G \subseteq \GL_2(\C)$ under
\[ A \longmapsto [x : A (y,z)]. \]
The image of $G$ in $\PGL_2(\C)$ has derived length $3$, and is therefore isomorphic to $\Sym_4$ by Lemma~\ref{lemma: psi(PGL_2(C))=3}.
The line $D =\{x=0\}$ is $H$-invariant, so $Q_S \cap D$ is an $H$-invariant effective divisor of degree $4$ on $D \simeq \p^1$. A contradiction, because every $\Sym_4$-orbit on $\p^1$ has cardinality at least $6$ by Lemma~\ref{lemma: size orbits for the actions of A4, S4, and A5 on P1}.

Thus every solvable subgroup of $H_S$ has derived length at most $3$. Since $\Aut(S)\simeq\Z/2\Z\times H_S$, it follows that
$\ell(F)\leq3$.
\smallskip

\noindent\textbf{Case~(10): del Pezzo surfaces of degree $1$.} The bi-anticanonical morphism maps $S$ two-to-one onto a quadric cone. Let $p\in S$ be the unique point lying over its vertex. Equivalently, $p$ is the unique zero-dimensional component of the fixed locus of the Bertini involution; see \cite[p.~12]{BayleBeauville2000}. Since the Bertini involution is central, every automorphism of $S$ fixes $p$.

The tangent representation
\[ \Aut(S)\longrightarrow\GL(T_pS) \simeq \GL_2(\C) \]
is faithful: indeed, a finite-order automorphism fixing $p$ and acting trivially on $T_pS$ is the identity. Hence $\ell(F) \leq \psi \bigl( \GL_2 (\C) \bigr) =4$.

\smallskip
\noindent\textbf{Case~(11): $(\Z/2\Z)^2$-conic bundles.} The action on the base gives a short exact sequence
\begin{equation} \label{equation: short exact sequence for Aut(S,pi) for a Z/2Z-conic bundle}
1\longrightarrow V \longrightarrow\Aut(S,\pi) \xrightarrow{\rho}H_V \longrightarrow1, \qquad V \simeq (\Z/2\Z)^2, \quad H_V \subseteq \PGL_2(\C). \end{equation}
By Lemma~\ref{lemma: psi(PGL_2(C))=3}, we obtain $\ell(F)\leq4$.

We have proved that every finite solvable subgroup of $\bp$ has derived length at most $4$, and  that this bound is attained (for example by $H_{216}$).
\end{proof}

\begin{remark}\label{remark: remark on cases 3,6,7,9}
Note that the proof moreover shows that a finite solvable subgroup $F\subseteq\bp$ with $\ell(F)=4$ cannot occur in cases~(3), (6), (7) and~(9).
\end{remark}

\subsection{Bounded solvable subgroups}

The upper bound in the following proposition was established by Urech \cite[Lemma~7.3]{Urech2021}. We include a short proof for completeness, and also to make explicit a minor adjustment needed in the cubic-surface case of the proof in loc.\ cit.

\begin{proposition} \label{proposition: psi_{bounded}(Bir(P2)) =5}
We have $ \psi_{\mathrm{bounded}}(\bp)=5$.
\end{proposition}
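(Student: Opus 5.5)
The lower bound is immediate: $\operatorname{Aff}_2(\C)=\C^2\rtimes\GL_2(\C)$ embeds in $\Aut(\p^2)\subseteq\bp$ as a bounded subgroup (all its elements have degree $1$). It contains a solvable subgroup of derived length $5$, for instance $\C^2\rtimes G$ with $G\subseteq\GL_2(\C)$ of derived length $4$. Hence $\psi_{\mathrm{bounded}}(\bp)\geq 5$.

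For the upper bound, let $G\subseteq\bp$ be bounded and solvable. I would first replace $G$ by its Zariski closure $\overline G$ in $\bp$. A bounded subgroup is contained in an algebraic subgroup, and the closure of a solvable group is solvable with the same derived length. By Theorems~\ref{theorem: each algebraic subgroup is contained in a maximal algebraic subgroup} and~\ref{theorem: the eleven families of maximal algebraic subgroups of Bir(P2)}, after conjugation we may then assume $G$ lies in one of the eleven maximal algebraic groups. It then suffices to bound $\psi$ of each of these groups by $5$, which I would do case by case, reusing the short exact sequence inequality $\psi(G)\leq\psi(N)+\psi(Q)$.

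\textbf{Cases (1)--(5).}
\begin{enumerate}
\item[(1)] $\psi(\PGL_3(\C))\leq\psi(\GL_3(\C))=5$ by Newman's table.
\item[(2)] $\psi(\Aut(\p^1\times\p^1))\leq 3+1=4$.
\item[(3)] $\psi(\Aut(S_6))\leq 1+2=3$.
\item[(4)] From $\Aut(\F_n)\simeq\C^{n+1}\rtimes(\GL_2(\C)/\mu_n)$ we get $\psi\leq 1+\psi(\GL_2(\C)/\mu_n)$. The preimage in $\GL_2(\C)$ of a solvable subgroup is solvable, so this is at most $1+4=5$.
\item[(5)] From~\eqref{equation: rho and sigma}, $\psi\leq\psi(\C^*)+\psi(\PGL_2(\C)\times\Z/2\Z)=1+3=4$.
\end{enumerate}

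\textbf{Cases (6)--(11).} These groups are finite, so Theorem~\ref{theorem: psi_{finite}(Bir(P2)) =4} already gives a bound of $4$. I would double-check the cubic surface case here, where the statement warns that Urech's argument needs a minor adjustment. For the Fermat cubic, $(\Z/3\Z)^3\rtimes\Sym_4$ gives $1+3=4$. For a cyclic cubic, the extension by $\Z/3\Z$ of a subgroup of $\Aut(\Gamma)$ gives $1+2=3$. Presumably Urech used a cruder bound for $\Aut(\Gamma)$, or missed the fact that $H_\Gamma$ fixes the point structure. Since our finite theorem already covers this, nothing further is needed.

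The main obstacle is justifying the reduction to algebraic subgroups. One must check that a bounded subgroup has Zariski closure an algebraic subgroup of $\bp$ (a standard fact: bounded subgroups are contained in algebraic subgroups, via regularization à la Weil). One must also check that solvability and derived length pass to the closure, which holds because commutator subgroups of the closure are contained in the closures of the commutator subgroups. Everything else is a bookkeeping check of the eleven cases.
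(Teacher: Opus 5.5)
Your proposal is correct and follows essentially the same route as the paper: put the bounded group inside an algebraic subgroup (the paper cites Blanc--Furter, Corollary~2.18), pass to one of Blanc's eleven maximal algebraic groups, and bound $\psi$ case by case, with the value $5$ reached only via $\GL_3(\C)$ in case~(1) and via $1+\psi(\GL_2(\C))$ in case~(4). The differences are cosmetic. Your Zariski-closure step is unnecessary, because every bound used is on $\psi$ of the whole ambient group. For the lower bound, the paper cites $\psi(\Aff_2(\C))=5$, whereas your explicit $\C^2\rtimes G$ needs a short check. Take $G$ to be the binary octahedral group; then $-I\in D^3G$, so $[\C^2,\{\pm I\}]=\C^2$ gives $D^4(\C^2\rtimes G)=\C^2\neq 1$.
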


\begin{proof}
The lower bound follows from
$\psi(\Aff_2(\C))=5$ \cite[Proposition~3.16]{FurterPoloni2018},
since $\Aff_2(\C)\subseteq\bp$ is bounded.

For the upper bound, every bounded subgroup of $\bp$ is contained in an algebraic subgroup \cite[Corollary~2.18]{BlancFurter2013}, hence in one of the maximal algebraic subgroups of Theorem~\ref{theorem: the eleven families of maximal algebraic subgroups of Bir(P2)}. In cases (2), (3), and (5), the estimates in the proof of Theorem~\ref{theorem: psi_{finite}(Bir(P2)) =4} apply verbatim to arbitrary solvable subgroups; in cases (6)--(11), the maximal algebraic group is
finite, so the same theorem gives the bound $4$. Finally,
\[ \psi(\PGL_3(\C)) \leq \psi(\GL_3(\C))=5 \]
and
\[ \psi(\Aut(\F_n)) \leq 1+\psi(\GL_2(\C)/\mu_n) \leq 1+\psi(\GL_2(\C))=5. \]
Thus $\psi_{\mathrm{bounded}}(\bp) \leq 5$, and the result follows.
\end{proof}

\section{Geometric tools for the centraliser theorem}
\label{section: geometric tools}

The proof of the centraliser theorem uses three main mechanisms to exclude loxodromic elements: superrigidity, preservation of a rational fibration, and preservation of a curve of genus at least two. We collect in this section the geometric tools needed for these arguments.

\subsection{Criteria for excluding loxodromic elements}

The following result is well known.

\begin{lemma}[Rational fibrations] \label{lemma: rational fibration}
Let $f\in\bp$.  If $f$ preserves a rational fibration, then $f$ is not loxodromic.
\end{lemma}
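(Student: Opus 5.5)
We argue by contradiction, using the action of $\bp$ on the Picard--Manin space $\H^\infty$. Suppose $f$ preserves a rational fibration, say the pencil of lines through a point, after conjugating $f$ by a birational map. This conjugation does not change the dynamical type, because conjugation in $\bp$ induces conjugation of isometries of $\H^\infty$. So we may assume that $f$ lies in the Jonquières group $\Jonq$ of transformations preserving the pencil $\pi\colon\p^2\dashrightarrow\p^1$ of lines through $p_0=[1:0:0]$. The goal is then to show that the dynamical degree $\lambda(f)=\lim\deg(f^n)^{1/n}$ equals $1$. This suffices, since $f$ is loxodromic if and only if $\lambda(f)>1$, the translation length of $f$ on $\H^\infty$ being $\log\lambda(f)$.

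The key step is the degree growth of Jonquières maps. In affine coordinates $(x,y)$ with $\pi(x,y)=x$, write
\[ f(x,y)=\Bigl(\frac{ax+b}{cx+d},\ \frac{\alpha(x)y+\beta(x)}{\gamma(x)y+\delta(x)}\Bigr). \]
Let $\deg_y$ and $\deg_x$ denote the degrees of the second component in $y$ and in $x$. Then $\deg_y f=1$, and under composition this degree stays equal to $1$.

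The $x$-degree is additive under composition, up to cancellation:
\[ \deg_x(g\circ h)\leq\deg_x(g)+\deg_x(h). \]
Explicitly, composing gives the product of the $2\times 2$ matrices over $\C(x)$, after substituting the base Möbius map into the coefficients of $g$; such a substitution does not increase the degree of a coefficient. Since the degree of a Jonquières map is comparable to $\deg_x+\deg_y$ up to bounded constants, it follows that
\[ \deg(f^n)\leq C\,n \]
for some constant $C$. Hence $\lambda(f)=\lim\deg(f^n)^{1/n}=1$, and $f$ is not loxodromic.

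An alternative, more geometric route: $f$ fixes the class of the fibre $[F]$, a nef isotropic class in the Picard--Manin space, that is, a point of $\partial\H^\infty$. A loxodromic isometry fixes exactly two boundary points, namely the endpoints of its axis, and satisfies $f_*v=\lambda^{\pm1}v$ on the corresponding isotropic vectors. Preservation of the fibration gives $f_*[F]=[F]$ exactly, with eigenvalue $1$ rather than merely projective invariance. This contradicts $\lambda(f)>1$.

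The main obstacle is the bookkeeping for the eigenvalue, or for the linear degree bound. The degree estimate route above is the most elementary, and I would write that one, checking only that substituting a Möbius transformation in $x$ does not increase the degree of the coefficients beyond the additive bound.
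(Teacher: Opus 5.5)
Your proposal is correct. The paper gives no proof of this lemma: it is quoted as ``well known'', so there is no argument to match yours against. Your degree-growth route is one of the standard proofs.

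Two ingredients you leave implicit are standard and fine:
\begin{enumerate}
\item The reduction to $\Jonq_{p_0}$ uses the Noether--Enriques theorem: every rational fibration on $\p^2$ is birationally equivalent to the pencil of lines through a point.
\item The last step uses Cantat's identification of the translation length of $f$ on $\H^\infty$ with $\log\lambda(f)$.
\end{enumerate}

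The bookkeeping you flagged works as follows. Normalise the matrix $\smat{\alpha}{\beta}{\gamma}{\delta}$ to have coprime polynomial entries of degree at most $m$. Substituting $x\mapsto (ax+b)/(cx+d)$ and multiplying through by $(cx+d)^m$ keeps the entries polynomial of degree at most $m$. Removing a common factor after taking the product never raises degrees, so $\deg_x(f^n)\le n\deg_x(f)$. Homogenising the two components gives $\deg(f)\le\deg_x(f)+2$, hence $\deg(f^n)\le n\deg_x(f)+2$.

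Your alternative route is also correct: the class $e_0-e_{p_0}$ of the pencil is isotropic and fixed by $f$ with eigenvalue exactly $1$. This route is closer in spirit to how the paper uses $\H^\infty$ elsewhere, and it avoids degree bookkeeping entirely. It needs only that a loxodromic isometry fixes exactly the two endpoints of its axis, with isotropic eigenvalues $\lambda^{\pm1}\neq 1$. The degree route instead gives the stronger statement that $\deg(f^n)$ grows at most linearly, consistent with the Diller--Favre classification.
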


\begin{lemma}[Invariant curves of high genus] \label{lemma: preserved  curve of genus at least 2 implies not loxodromic}
Let $X$ be a smooth projective rational surface and let $f \colon X \dashrightarrow X$ be birational.  Suppose that $f$ preserves an irreducible curve whose normalization has genus at least two.  Then $f$ is not loxodromic.
\end{lemma}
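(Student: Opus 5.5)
The plan is to compare the growth of the genus of the invariant curve with the growth of degrees under iteration. Let $C\subseteq X$ be the irreducible curve with $f(C)=C$, in the sense that $f$ restricts to a birational map of $C$ onto itself. Its normalization $\widetilde C$ has genus $g\geq 2$. Suppose, for contradiction, that $f$ is loxodromic, with dynamical degree $\lambda(f)>1$.

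\textbf{Step 1: a finite-order consequence on the curve.} The birational map $f|_C$ lifts to an automorphism of $\widetilde C$. Since $g\geq2$, Hurwitz's theorem makes $\Aut(\widetilde C)$ finite. So some power $f^N$ induces the identity on $\widetilde C$, and hence fixes $C$ pointwise wherever it is defined. Since $\lambda(f^N)=\lambda(f)^N$, we may replace $f$ by $f^N$.

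\textbf{Step 2: use the Picard--Manin space.} Now $f$ fixes the curve $C$ pointwise and is loxodromic. I would use the theorem of Diller--Favre and Cantat on invariant curves of loxodromic maps. Pass to a model on which $f$ is algebraically stable. After further blow-ups, the strict transform of $C$ is still a curve pointwise fixed by a loxodromic map. For a loxodromic map, every invariant curve has arithmetic genus at most $1$. More precisely, the union of the periodic curves of $f$ can be contracted, or else forms a configuration of rational curves or of genus-one curves (Diller--Jackson--Sommese, Cantat). This contradicts the assumption that the normalization of $C$ has genus $g\geq2$.

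\textbf{Alternative for Step 2.} If I wanted a more self-contained argument, I would use the following route instead. Let $E\in\H^\infty$ be the class of the strict transform of $C$ in the Picard--Manin space. Its self-intersection can be made arbitrarily negative by blowing up points on $C$. The key point is that the adjunction quantity $C^2+K\cdot C=2p_a-2$ is bounded below by $2g-2>0$ on every model. Invariance of $C$ gives $f_*[C]=[C]+(\text{exceptional classes})$. Combining this with the fact that the eigenvector classes $\theta^\pm$ of a loxodromic map are nef on the limit and satisfy $\theta^\pm\cdot K\leq0$, one derives that $[C]\cdot\theta^+=0$. Then $C$ would lie in the kernel of the Zariski decomposition of $\theta^+$, and such curves are known to have genus at most $1$.

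\textbf{Main obstacle.} The main difficulty is this genus bound for curves orthogonal to $\theta^+$. Since the paper calls the lemma well known, I would simply cite Diller--Jackson--Sommese, or Blanc--Cantat, for the fact that a curve invariant under a loxodromic birational map of a rational surface has geometric genus at most $1$.
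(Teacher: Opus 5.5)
Your committed route is exactly the paper's proof: pass to an algebraically stable model via Diller--Favre, note that the strict transform of $C$ is still invariant with the same geometric genus (a positive-genus curve cannot be contracted), and invoke Diller--Jackson--Sommese's bound $p_a\le 1$ for invariant curves of algebraically stable maps with $\lambda>1$. The reduction in Step~1 to a power fixing $C$ pointwise and the alternative Picard--Manin sketch are both unnecessary; the latter would essentially amount to reproving the Diller--Jackson--Sommese theorem.
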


\begin{proof}
By \cite[Theorem~0.1]{DillerFavre2001}, after a birational modification we may assume that $f$ is algebraically stable. A curve of positive genus cannot be contracted by a birational morphism between smooth surfaces, so the strict transform of the given curve remains invariant and has the same geometric genus.  Diller--Jackson--Sommese prove that an invariant connected curve for an algebraically stable birational map of dynamical degree $>1$ has arithmetic genus zero or one \cite[Theorem~1.1]{DJS07}.  This is impossible for an irreducible curve of geometric genus at least two.
\end{proof}

\begin{corollary}[Central involutions with a high-genus fixed curve]\label{corollary: central involution with a high-genus fixed curve}
Let $\sigma\in\Aut(X)$ be an involution of a smooth projective rational surface.  Assume that
$\Fix(\sigma)$ has a unique irreducible curve component $C$ of positive genus and
that $g(C)\geq2$. If $f\in\Bir(X)$ commutes with $\sigma$, then $f$ is not loxodromic.
\end{corollary}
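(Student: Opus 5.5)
The plan is to reduce to Lemma~\ref{lemma: preserved  curve of genus at least 2 implies not loxodromic} by showing that $f$ preserves the curve $C$, or at least a curve birational to it. The point is that $f$ is only birational, so it need not preserve $\Fix(\sigma)$ as a set. We therefore work with the function field and the quotient.

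The first step passes to the quotient by $\sigma$. Since $f$ commutes with $\sigma$, it descends to a birational self-map $\bar f$ of the quotient surface $Y = X/\langle\sigma\rangle$, or of a smooth model of it. The double cover $X \to Y$ is branched along the image of the divisorial part of $\Fix(\sigma)$. This branch curve is a birational invariant of the pair, in the following sense. The quadratic extension $\C(X)/\C(Y)$ is equivariant for $\bar f$, and a divisorial valuation of $\C(Y)$ ramifies in $\C(X)$ precisely when it is the image of a curve pointwise fixed by $\sigma$. Equivalently, one can argue directly on $X$: an irreducible curve $E\subseteq X$ is pointwise fixed by $\sigma$ if and only if $\sigma$ acts trivially on the residue field of the valuation $v_E$ and fixes $v_E$. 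Both conditions are preserved under conjugation by $f$, because $f\sigma=\sigma f$. Hence $f$ permutes the divisorial valuations $v_E$ of $\C(X)$ coming from curve components $E$ of $\Fix(\sigma)$, and it also permutes the divisorial valuations of exceptional curves on blow-ups of $X$ that are pointwise fixed by the lift of $\sigma$.

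The second step controls the genus of these curves. Any curve pointwise fixed by $\sigma$ on a blow-up of $X$ that is not a strict transform of a component of $\Fix(\sigma)$ lies over a point. It is therefore an exceptional curve, and so it is rational. Consequently the set of $\sigma$-fixed divisorial valuations whose centre has positive genus is exactly $\{v_C\}$, using the uniqueness hypothesis. The map $f$ preserves this set, and it preserves the genus of the residue field, since $f$ induces an isomorphism of residue fields. It follows that $f^*v_C = v_C$. The centre of $v_C$ on $X$ is then $C$ itself, so $f$ maps $C$ birationally onto $C$. In other words, $C$ is not contracted by $f$ and $f(C)=C$.

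The final step applies Lemma~\ref{lemma: preserved  curve of genus at least 2 implies not loxodromic} to the invariant curve $C$, whose genus is at least two. This shows that $f$ is not loxodromic.

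The main obstacle is the precise statement that $f$ sends $\sigma$-pointwise-fixed valuations to $\sigma$-pointwise-fixed valuations. To settle it, I would resolve $f$ as $X\xleftarrow{\eta}Z\xrightarrow{\mu}X$ with $\sigma$ lifting to $Z$. This is possible because we can take $\sigma$-equivariant resolutions: $\sigma$ lifts to $Z$ as $\eta^{-1}\sigma\eta=\mu^{-1}\sigma\mu$. On $Z$ the fixed curves of the lifted $\sigma$ are the strict transforms of fixed curves from either side, together with rational exceptional curves. The unique positive-genus fixed curve on $Z$ must therefore be the strict transform of $C$ under both $\eta$ and $\mu$, and this gives $f(C)=C$.
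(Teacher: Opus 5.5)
Your proposal is correct and follows essentially the same route as the paper: show that $f$ maps $C$ birationally onto a $\sigma$-pointwise-fixed curve of the same positive genus, conclude $f(C)=C$ from the uniqueness hypothesis, and apply the lemma on invariant curves of genus at least two. The paper argues directly, noting that $\sigma(f(x))=f(\sigma(x))=f(x)$ for a general point $x\in C$ and that $C$ is not contracted since it is not rational, so your quotient and valuation discussion is unnecessary; there, also, $f$ only permutes the whole set of $\sigma$-fixed divisorial valuations, not separately those with curve centres on $X$, since fixed rational curves may be contracted. Your last paragraph, with the $\sigma$-equivariant resolution $X\xleftarrow{\eta}Z\xrightarrow{\mu}X$, is a rigorous version of the same idea.
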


\begin{proof}
For a general point $x\in C$ at which $f$ is defined,
\[ \sigma(f(x))=f(\sigma(x))=f(x). \]
The curve $C$ cannot be contracted by $f$, because every curve contracted by a birational map between smooth surfaces is rational.  Hence the closure of $f(C)$ is a curve contained in $\Fix(\sigma)$.  The induced map from the normalization of
$C$ to the normalization of its image is birational, so the image has the same positive genus as $C$.  By uniqueness it is~$C$.  Apply Lemma~\ref{lemma: preserved  curve of genus at least 2 implies not loxodromic}.
\end{proof}

\subsection{Equivariant Sarkisov theory and superrigidity}
We recall the part of the two-dimensional equivariant Sarkisov program that will be used below.  Our main references are \cite{Iskovskikh1996}, \cite[\S7]{DolgachevIskovskikh2009}, \cite[\S2]{CheltsovTschinkelZhang2025}  and \cite[Chapter~16]{LamyCremonaBook}. Unless explicitly stated otherwise, throughout this subsection $G$ is a finite group acting biregularly on a smooth projective rational surface. 

If $X$ and $X'$ are $G$-surfaces, a birational map
$\varphi\colon X\dashrightarrow X'$ is called $G$-equivariant if
\[ \varphi\circ g=g\circ\varphi \qquad\text{for every }g\in G. \]
In particular, every element of $\Cent_{\Bir(X)}(G)$ is $G$-equivariant.

\begin{definition}\label{definition: G-Mori fibre space}
A \textit{$G$-Mori fibre space} on a smooth projective rational surface is a $G$-equivariant morphism $\pi\colon S\to B$ of one of the following two forms:
\begin{enumerate}[$($i$)$]
\item $B$ is a point, $S$ is a del Pezzo surface, and
      $\operatorname{rk}\Pic(S)^G=1$;
\item $B\simeq\p^1$, the morphism $\pi$ is a conic bundle, and
      $\operatorname{rk}\Pic(S)^G=2$.
\end{enumerate}
In the first case we call $S$ a minimal $G$-del Pezzo surface, and in the second we call $(S,\pi)$ a minimal $G$-conic bundle.
\end{definition}

By the two-dimensional equivariant Sarkisov program, every
$G$-equivariant birational map between $G$-Mori fibre spaces factors as a composition of $G$-equivariant isomorphisms and $G$-Sarkisov links; see \cite[Theorem~16.28(1)]{LamyCremonaBook}. We recall the four types of links in Figure~\ref{fig:sarkisov-links}; see also \cite[p.~276]{Blanc2009}.

\begin{figure}[ht]
\centering

\begin{minipage}[t]{0.21\textwidth}
\centering
\textbf{Type I}
\[
\begin{tikzcd}[row sep=1.7em,column sep=1.8em]
S \arrow[d,"\pi"'] &
S' \arrow[l,"\sigma"'] \arrow[d,"\pi'"] \\
\mathrm{pt} & \p^1 \arrow[l]
\end{tikzcd}
\]
\end{minipage}
\hfill
\begin{minipage}[t]{0.28\textwidth}
\centering
\textbf{Type II}
\[
\begin{tikzcd}[row sep=1.7em,column sep=1.5em]
S \arrow[d,"\pi"'] &
X \arrow[l,"\sigma"'] \arrow[r,"\tau"] &
S' \arrow[d,"\pi'"] \\
Y & &
Y' \arrow[ll,"\sim"']
\end{tikzcd}
\]
\end{minipage}
\hfill
\begin{minipage}[t]{0.21\textwidth}
\centering
\textbf{Type III}
\[
\begin{tikzcd}[row sep=1.7em,column sep=1.8em]
S \arrow[r,"\sigma"] \arrow[d,"\pi"'] &
S' \arrow[d,"\pi'"] \\
\p^1 \arrow[r] &
\mathrm{pt}
\end{tikzcd}
\]
\end{minipage}
\hfill
\begin{minipage}[t]{0.21\textwidth}
\centering
\textbf{Type IV}
\[
\begin{tikzcd}[row sep=1.7em,column sep=1.8em]
S \arrow[r,"\sim"] \arrow[d,"\pi"'] &
S' \arrow[d,"\pi'"] \\
\p^1 &
\p^1
\end{tikzcd}
\]
\end{minipage}

\caption{The four types of $G$-Sarkisov links between $G$-Mori fibre spaces.}
\label{fig:sarkisov-links}
\end{figure}

In Type~I one passes from a minimal $G$-del Pezzo surface to a
minimal $G$-conic bundle, and Type~III is the inverse operation.
A link of Type~II joins two minimal $G$-del Pezzo surfaces or two
minimal $G$-conic bundles; in the latter case the bases are identified. Here $\sigma$ and $\tau$ are $G$-equivariant birational morphisms contracting $G$-orbits. A link of Type~IV identifies the underlying surfaces but changes the $G$-equivariant conic-bundle structure.

In particular, a Type~II link between conic bundles is an elementary transformation: $\sigma$ blows up a $G$-orbit of points lying on pairwise distinct smooth fibres, and then $\tau$ contracts the strict transforms of these fibres. 

A Sarkisov factorization is called reduced if no two consecutive links are inverse to one another, up to $G$-equivariant isomorphisms of Mori fibre spaces.

For conic bundles we will also use the following standard terminology (see e.g.\ \cite[Definition~16.10]{LamyCremonaBook}).

\begin{definition} \label{definition: G-twisted and untwisted singular fibre} Let $\pi \colon S \to \p^1$ be a $G$-conic bundle. A singular fibre $E_1+E_2$ is called \textit{$G$-twisted} if there exists $g\in G$ such that
\[ g(E_1)=E_2. \]
Otherwise it is called \textit{$G$-untwisted}.
\end{definition}

The following characterization of minimality is classical.

\begin{lemma} \label{lemma: characterization of minimal G-conic bundles via twistedness}
A $G$-conic bundle $\pi\colon S\to\p^1$ is $G$-minimal if and only if all its singular fibres are $G$-twisted.
\end{lemma}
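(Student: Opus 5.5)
We argue in terms of $\operatorname{rk}\Pic(S)^G$, where $\pi\colon S\to\p^1$ is the given $G$-conic bundle. Since the fibre class $F$ is $G$-invariant, and so is $-K_S$, which is not proportional to $F$ (it has degree $2$ on $F$ while $F^2=0$), we always have $\operatorname{rk}\Pic(S)^G\geq 2$. So minimality in the sense of Definition~\ref{definition: G-Mori fibre space}(ii) means exactly $\operatorname{rk}\Pic(S)^G=2$. The aim is to show that this happens precisely when no singular fibre is $G$-untwisted.

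First we describe $\Pic(S)\otimes\Q$ explicitly. Contract one component of each singular fibre to obtain a $\p^1$-bundle $\F_m$. This shows that $\Pic(S)\otimes\Q$ is spanned by $K_S$, $F$, and the classes of the components $E_1^{(i)},E_2^{(i)}$ of the singular fibres $F_i$, subject only to the relations $E_1^{(i)}+E_2^{(i)}=F$. Hence
\[
\Pic(S)\otimes\Q=\langle K_S,F\rangle\oplus\bigoplus_i\Q\,(E_1^{(i)}-E_2^{(i)}).
\]
The set of singular fibres is permuted by $G$, since $G$ preserves the fibration. On the second summand, $G$ acts by signed permutations of the vectors $e_i=E_1^{(i)}-E_2^{(i)}$: if $g(F_i)=F_j$, then $g(e_i)=\pm e_j$, with the sign recording whether $g$ swaps the labelling of the components.

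Next we compute the invariants of this signed permutation representation. Decompose it into $G$-orbits of singular fibres. For one orbit $O$, fix a fibre $F_i\in O$ with stabilizer $G_i$. The summand $\bigoplus_{j\in O}\Q e_j$ is the representation induced from the character $\chi_i\colon G_i\to\{\pm1\}$ given by $g(e_i)=\chi_i(g)e_i$. By Frobenius reciprocity, its $G$-invariants have dimension $1$ if $\chi_i$ is trivial and $0$ otherwise. The character $\chi_i$ is trivial exactly when no element of $G$ sending $F_i$ to itself swaps $E_1^{(i)}$ and $E_2^{(i)}$. Elements sending $F_i$ to a different fibre cannot map $E_1^{(i)}$ to $E_2^{(i)}$, so $\chi_i$ is trivial exactly when $F_i$ is $G$-untwisted in the sense of Definition~\ref{definition: G-twisted and untwisted singular fibre}. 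Twistedness is constant on $G$-orbits, so
\[
\operatorname{rk}\Pic(S)^G=2+\#\{G\text{-orbits of untwisted singular fibres}\}.
\]
This rank equals $2$ if and only if every singular fibre is twisted.

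As a sanity check, we can also see geometrically why an untwisted orbit obstructs minimality. For such an orbit, the components $E_1^{(j)}$ chosen consistently along it form a $G$-invariant set of disjoint $(-1)$-curves. These can be $G$-equivariantly contracted to another conic bundle, whereas twisted fibres admit no such contraction.

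The main point needing care is the description of $\Pic(S)$ together with the claim that the $e_i$ are linearly independent modulo $\langle K_S,F\rangle$. This follows from the blow-up description: the $e_i$ correspond, up to $F$, to the independent exceptional classes of the contraction to $\F_m$. The rest is formal.
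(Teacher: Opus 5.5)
Your proof is correct. The paper gives no argument for this lemma and simply quotes it as classical; your computation $\operatorname{rk}\Pic(S)^G=2+\#\{G\text{-orbits of }G\text{-untwisted singular fibres}\}$ is the standard proof behind it. It rests on the $\Q$-basis $K_S,F,E_1^{(i)}-E_2^{(i)}$ of $\Pic(S)\otimes\Q$ and on the induced-character description of the signed permutation action. This formula also gives both implications the paper later uses: minimal implies all fibres twisted in the proof of Lemma~\ref{lemma: minimal exceptional conic bundle centraliser}, and the converse in Reduction~3 of Lemma~\ref{lemma: exceptional conic bundle centraliser}.
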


\begin{definition} \label{definition: superrigid G-Mori fibre space}
$($see e.g.\ \cite[Proposition 7.1.2, p.~275]{Blanc2009}, \cite[Definition~7.10, p.~530]{DolgachevIskovskikh2009}, \cite[\S2, p.~10]{CheltsovTschinkelZhang2025}$)$
A $G$-Mori fibre space $\pi \colon S \to B$ is called superrigid if for any $G$-Mori fibre space $\pi' \colon S' \to B'$ and any birational $G$-map $\varphi \colon S \dasharrow S'$, the map $\varphi$ is an isomorphism of $G$-Mori fibre spaces. Equivalently, there are no $G$-Sarkisov links starting from the $G$-Mori fibre space $\pi \colon S \to B$.
\end{definition}

\begin{lemma}[Standard orbit bound] \label{lemma: orbit bound for del Pezzo links}
Let $S$ be a minimal $G$-del Pezzo surface, and suppose that a $G$-Sarkisov link starting from $S$ begins by blowing up
a $G$-orbit $\Omega\subseteq S$. Then
\[ |\Omega|<K_S^2. \]
\end{lemma}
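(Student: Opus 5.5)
The plan is to unwind the definition of a Sarkisov link starting from a del Pezzo surface, that is, a link of Type~I or Type~II with $S$ on the left, and to use the fact that the blown-up surface must stay del Pezzo-like. Let $\sigma\colon X\to S$ be the blow-up of the $G$-orbit $\Omega$; it is the first step of the link. In the two-dimensional Sarkisov program (see \cite[Chapter~16]{LamyCremonaBook}, \cite[\S7]{DolgachevIskovskikh2009}), the defining condition for a link starting at a minimal $G$-del Pezzo surface is that $-K_X$ is still relatively ample over the new base. In the case of a Type~II link between del Pezzo surfaces, this means $X$ is a weak del Pezzo surface, in fact del Pezzo except for the contracted curves. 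In the case of Type~I, it means $X$ admits a conic bundle structure with $-K_X$ relatively ample. I would recall that in either case the points of $\Omega$ are in general position, so that $X$ is smooth, and that $-K_X$ is nef and big; the cited references state this precisely via the $2$-ray game.

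The key computation is then immediate. Since $\sigma$ blows up $|\Omega|$ distinct points,
\[ K_X^2 = K_S^2 - |\Omega|. \]
So it suffices to show $K_X^2>0$. For a Type~II link, $X$ is obtained by the $2$-ray game. The Mori cone $\overline{NE}(X)^G$ has exactly two extremal rays, both $K_X$-negative: one contracted by $\sigma$, the other by $\tau$ onto a del Pezzo surface $S'$. Hence $-K_X$ is positive on both rays and therefore ample on $X$ in the $G$-invariant sense. Because the ample cone of $X$ meets $\Pic(X)^G$ in an open set containing the class of $-K_X$, we get $-K_X$ ample, hence $K_X^2>0$. For a Type~I link, $X$ carries a conic bundle $X\to\p^1$ with $\operatorname{rk}\Pic(X)^G=2$, and $-K_X$ is again positive on both extremal rays of the $G$-invariant cone: the exceptional ray and the fibre class. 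So $-K_X$ is ample, giving $K_X^2>0$. Strictly, ampleness of $-K_X$ for $G$-invariant classes only gives positivity on $G$-invariant curves. However, $K_X^2$ is computed from the invariant class itself, and an invariant class that is positive on the invariant cone is ample by averaging. So $K_X^2>0$ holds in both cases, and therefore $|\Omega|<K_S^2$.

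The main obstacle is justifying that $-K_X$ is ample, or at least nef and big, rather than merely that the link exists. This is exactly where one must invoke the precise statement of the two-ray game from \cite[Theorem~16.28]{LamyCremonaBook} or Iskovskikh's classification \cite{Iskovskikh1996}. That statement guarantees that both extremal contractions from $X$ are $K_X$-negative, and it excludes flops in dimension two. If only nefness and bigness were available, $K_X^2>0$ would still follow, since $(-K_X)^2>0$ for a nef and big divisor, so the inequality is strict in either case. I would also remark that the same bound is often stated as $|\Omega|\le K_S^2-1$, for example in \cite[\S7]{DolgachevIskovskikh2009}, and could simply be cited from there.
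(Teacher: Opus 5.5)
Your proof is correct and follows the same route as the paper: blow up $\Omega$ to get $X$ with $K_X^2=K_S^2-|\Omega|$, and use that $-K_X$ is ample, so $K_X^2>0$. The only difference is that the paper cites \cite[\S2]{CheltsovTschinkelZhang2025} for the fact that $X$ is del Pezzo, whereas you derive it yourself: both extremal rays of the two-dimensional cone $\overline{NE}(X)^G$ are $K_X$-negative, and averaging over $G$ together with Kleiman's criterion then gives ampleness of $-K_X$. That is a valid and slightly more self-contained justification.
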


\begin{proof}
By the classification of two-dimensional equivariant Sarkisov links,
the blow-up
\[ \widehat S\longrightarrow S \]
of the centre $\Omega$ is again a del Pezzo surface; see \cite[\S2]{CheltsovTschinkelZhang2025}. Hence
\[ 0<K_{\widehat S}^2=K_S^2-|\Omega|, \]
and the result follows.
\end{proof}

In particular, since $K_{\p^2}^2=9$, the centre of a first $G$-Sarkisov link from $\p^2$ has cardinality less
than $9$. Similarly, since $K_{\p^1\times\p^1}^2=8$, the centre
of a first $G$-Sarkisov link from the del Pezzo $G$-surface $\p^1\times\p^1$ has cardinality less than $8$.

The following classical result (see for example \cite[Corollary 7.11]{DolgachevIskovskikh2009}) is a direct consequence of Lemma~\ref{lemma: orbit bound for del Pezzo links}.

\begin{lemma} \label{lemma: a minimal del Pezzo G-surface of degree d with no orbit of size <d is G-superrigid}
Let $S$ be a minimal del Pezzo $G$-surface of degree $d =K_S^2$. If $S$ has no $G$-orbit of size $< d$, then $S$ is $G$-superrigid.
\end{lemma}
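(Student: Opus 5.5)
The plan is to argue by contradiction, using the equivalent formulation of superrigidity in Definition~\ref{definition: superrigid G-Mori fibre space}: $S$ is $G$-superrigid exactly when no $G$-Sarkisov link starts from the $G$-Mori fibre space $S\to\mathrm{pt}$. So I will assume that some $G$-Sarkisov link $\chi$ starts from $S$ and derive a contradiction. The goal is to show that every such link must begin by blowing up a $G$-orbit, after which Lemma~\ref{lemma: orbit bound for del Pezzo links} finishes the proof.

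\textbf{Step 1: identify the first step of the link.} The source $S$ is a minimal $G$-del Pezzo surface, so its base is a point. From Figure~\ref{fig:sarkisov-links}, a link starting over a point is of Type~I or Type~II, since Types~III and~IV start from conic bundles. In either type the link begins with a $G$-equivariant birational morphism $\sigma\colon S'\to S$ (Type~I) or $\sigma\colon X\to S$ (Type~II) that contracts $G$-orbits. Its centre in $S$ is therefore a $G$-invariant finite set. For links from del Pezzo surfaces, the standard classification says this centre is a single $G$-orbit $\Omega$ of points in general position, blown up simultaneously. This is the same situation described in Lemma~\ref{lemma: orbit bound for del Pezzo links}, which uses \cite[\S2]{CheltsovTschinkelZhang2025}.

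\textbf{Step 2: apply the orbit bound.} Lemma~\ref{lemma: orbit bound for del Pezzo links} gives $|\Omega|<K_S^2=d$. This contradicts the hypothesis that $S$ has no $G$-orbit of size $<d$. Hence no link starts from $S$, and $S$ is $G$-superrigid.

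The only delicate point is Step~1: making sure that every link from a minimal del Pezzo surface really begins by blowing up one $G$-orbit, rather than a set of infinitely near points or a union of several orbits. I would handle this by citing the classification of links exactly as Lemma~\ref{lemma: orbit bound for del Pezzo links} already does. If the centre could be infinitely near, the first-level points would still form a $G$-orbit in $S$ of size at most the total number of blown-up points. The same positivity argument, $K^2>0$ for the blown-up surface, bounds that orbit by a number $<d$, so the contradiction survives.
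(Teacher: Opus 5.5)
Your proposal is correct and is essentially the paper's argument: any non-isomorphic $G$-birational map from $S$ (equivalently, any $G$-Sarkisov link starting from $S$) begins by blowing up a $G$-orbit, and Lemma~\ref{lemma: orbit bound for del Pezzo links} forces that orbit to have size $<d$, contradicting the hypothesis. Your fallback discussion of infinitely near centres is unnecessary but harmless. The first blow-up of a link has relative $G$-invariant Picard rank one and produces a del Pezzo surface, so its centre is a single orbit of actual points of $S$.
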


\begin{proof}
Otherwise a reduced Sarkisov factorization of a non-isomorphic $G$-equivariant birational map starting from $S$ would have a first link, contradicting Lemma~\ref{lemma: orbit bound for del Pezzo links}.
\end{proof}

\subsection{Equivariant birational maps of Hirzebruch surfaces}

We now record two consequences of the preceding Sarkisov description. The first controls the possible passage from $\p^2$ to $\F_1$, and the second applies it to birational self-maps of odd Hirzebruch surfaces.

\begin{lemma}[Unique first link]
\label{lemma: unique first link in the intransitive linear case}
Let $F\subseteq\PGL_3(\C)$ be a finite, intransitive, solvable subgroup of derived length $4$, and let $p\in\p^2$ be a fixed point of $F$. Then $p$ is the unique fixed point of $F$, and the only possible first $F$-Sarkisov link from $\p^2$ is the link obtained by blowing up $p$, namely
\[ \pi^{-1}\colon \p^2\dashrightarrow \F_1=\Bl_p(\p^2), \]
where $\pi\colon\F_1\to\p^2$ is the blow-up morphism.
\end{lemma}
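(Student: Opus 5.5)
We identify $F$, after conjugation, with a finite subgroup $G\subseteq\GL_2(\C)$ acting by $[x:y:z]\mapsto[x:A(y,z)]$, with $p=[1:0:0]$ and invariant line $D=\{x=0\}$. Since $\ell(F)=4$, the proof of Lemma~\ref{lemma: A solvable subgroup of derived length 4 of GL_2(C) contains the subgroup {I,-I}} shows two things. The image $\bar G$ of $G$ in $\PGL_2(\C)$ is $\Sym_4$, and $-I\in G$.

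\emph{Uniqueness of the fixed point.} We first take a point $q\neq p$ fixed by $F$.
\begin{itemize}
\item If $q\in D$, then $q$ corresponds to a line in $\C^2$ preserved by $G$. Hence $\bar G$ fixes a point of $\p^1$, and is therefore cyclic by Lemma~\ref{lemma: finite subgroups of PGL2 fixing a point are cyclic}. This is a contradiction.
\item If $q\notin D$, the line $pq$ is $F$-invariant and meets $D$ in a point fixed by $F$. We are then back in the previous case.
\end{itemize}
So $p$ is the unique fixed point.

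\emph{Small orbits.} Next we classify the $F$-orbits $\Omega\subseteq\p^2$ with $|\Omega|<9$, using Lemma~\ref{lemma: orbit bound for del Pezzo links}. Let $\Omega$ be such an orbit with $\Omega\neq\{p\}$.
\begin{itemize}
\item Suppose $\Omega\subseteq D$. Its image is a $\Sym_4$-orbit on $\p^1$, so $|\Omega|\in\{6,8\}$ by Lemma~\ref{lemma: size orbits for the actions of A4, S4, and A5 on P1}.
\item Suppose $\Omega\not\subseteq D$. Projection from $p$ maps $\Omega$ onto a $\Sym_4$-orbit in $D$ of size at least $6$.

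The element $-I$ acts as $[x:y:z]\mapsto[x:-y:-z]=[-x:y:z]$. This is the homology with centre $p$ and axis $D$. On each line through $p$ it fixes only $p$ and the point on $D$, and it moves every other point. So each fibre of the projection $\Omega\to D$ has even size, at least $2$, and all fibres have the same size by transitivity.

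It follows that $|\Omega|\geq 12>8$.
\end{itemize}
Hence the only possible centres are $\{p\}$ and the orbits of size $6$ or $8$ lying on $D$.

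\emph{Excluding orbits on $D$.} The orbits of size $6$ or $8$ on $D$ consist of collinear points. The blow-up of three or more collinear points is not a del Pezzo surface, since the strict transform of $D$ has self-intersection $1-|\Omega|\leq-2$. By the proof of Lemma~\ref{lemma: orbit bound for del Pezzo links}, which uses that the blow-up of the centre must be del Pezzo, these centres are excluded.

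\emph{The link at $p$.} There remains only the centre $\{p\}$. The blow-up $\F_1\to\p^2$ at $p$ is $F$-equivariant. The $F$-Sarkisov link with centre $p$ is the Type~I link to the conic bundle $\F_1\to\p^1$ given by the lines through $p$. This conic bundle is $F$-minimal, since $\rk\Pic(\F_1)^F=2$. We should also check that no Type~II link from $\p^2$ starts at $p$: the blow-up $\F_1$ has $\rk\Pic^F=2$, and its only other extremal contraction is the conic bundle, not a birational contraction. The $2$-ray game thus yields exactly the Type~I link $\pi^{-1}$.

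We expect the main obstacle to be the fibre-size argument, in particular the correct use of $-I$ as a homology. The rest is routine verification against the classification of links.
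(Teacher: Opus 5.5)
Your proposal is correct and follows essentially the same route as the paper. It uses the same normalisation with $\bar G\simeq\Sym_4$ and $-I\in G$, the same orbit bounds ($\geq 6$ on $D$, and $\geq 12$ off $D\cup\{p\}$ via the homology $-I$), and the same exclusion of centres on $D$; your $\widehat D^{2}=1-|\Omega|\leq -2$ is equivalent by adjunction to the paper's $(-K_{\widehat S})\cdot\widehat D=3-|\Omega|<0$.

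The differences are cosmetic. You prove uniqueness of the fixed point directly (a fixed point on $D$ would force $\Sym_4$ to be cyclic, and a fixed point off $D$ gives one on $D$ via the line $pq$), whereas the paper reads it off from the orbit bounds. You also spell out why the centre $\{p\}$ yields the Type~I link, which the paper leaves implicit.
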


\begin{proof}
Using the notation above, after conjugation we may assume that
\[ p=[1:0:0], \qquad D= \{x=0\}, \]
and identify $F$ with a finite subgroup $G\subseteq\GL_2(\C)$ via
\[ A \longmapsto\bigl( [x:y:z] \mapsto [x:A(y,z)] \bigr). \]
Let
\[ \rho \colon \GL_2(\C) \longrightarrow \PGL_2(\C) \]
be the canonical projection and put $P:=\rho(G)$. Since
$\ker(\rho|_G)$ consists of scalar matrices, it is abelian. Hence
$\length(P)\geq3$, and Lemma~\ref{lemma: psi(PGL_2(C))=3} gives
\[ P \simeq \Sym_4. \]
Moreover, by Lemma~\ref{lemma: A solvable subgroup of derived length 4 of GL_2(C) contains the subgroup {I,-I}}, we have $-I\in G$.

Projection from $p$ identifies $D$ with the base of the pencil of
lines through $p$, and the induced action on this base is precisely $P \simeq \Sym_4$. In particular, by Lemma~\ref{lemma: size orbits for the actions of A4, S4, and A5 on P1}, every $F$-orbit contained in $D$ has cardinality at least $6$.

Now let
\[ q=[1:v] \in \p^2 \setminus(D \cup \{p\}), \qquad 0 \neq v \in \C^2. \]
The natural map
\[ F \cdot q \longrightarrow P \cdot [v] \]
is surjective. Since $-I\in G$, every fibre contains the two distinct points $[1:Av]$ and $[1:-Av]$. Therefore
\[ | F\cdot q | \;  \geq  \; 2 \, | P\cdot[v] |  \; \geq  \; 12. \]
Thus no point of $D$ or of $\p^2 \setminus(D \cup \{p\})$ is fixed by $F$, and hence $p$ is the unique fixed point of $F$.

Since $\{p\}$ and $D$ are $F$-invariant, every $F$-orbit is
contained in one of
\[ \{p\},\qquad D,\qquad \p^2 \setminus (D \cup \{p\}). \]

Let $\Omega\subseteq \p^2$ be the centre of a first $F$-Sarkisov link. By Lemma~\ref{lemma: orbit bound for del Pezzo links},
\[ | \Omega | <9. \]
The preceding estimate excludes $\Omega \subseteq \p^2 \setminus(  D \cup \{p\} )$.

Suppose that $\Omega\subseteq D$. Then $|\Omega| \geq 6$. Let
\[ \eta \colon \widehat S\longrightarrow \p^2 \]
be the blow-up of $\Omega$, and let $\widehat D$ be the strict
transform of $D$. Since $\widehat S$ is a del Pezzo surface,
$-K_{\widehat S}$ is ample, whereas
\[ (-K_{\widehat S}) \cdot \widehat D =3-|\Omega|<0, \]
a contradiction. Hence $\Omega$ is not contained in $D$.

The only remaining possibility is $\Omega= \{p \}$. Therefore the first link is
\[ \pi^{-1}\colon\p^2 \dashrightarrow \F_1.\qedhere \]
\end{proof}

We now use Lemma~\ref{lemma: unique first link in the intransitive linear case} to prove the following proposition.

\begin{proposition}[Equivariant self-maps of odd Hirzebruch surfaces]
\label{proposition: equivariant self-maps of odd Hirzebruch surfaces}
Let $n\geq 1$ be odd, let
\[ \pi_n \colon \F_n \longrightarrow \p^1 \]
be the standard ruling, and let $F \subseteq\Aut(\F_n)$ be a finite
solvable subgroup of derived length $4$. Then every
$F$-equivariant birational self-map
\[ \varphi\colon \F_n\dashrightarrow\F_n \]
preserves the ruling $\pi_n$. In particular, $\varphi$ is not
loxodromic.
\end{proposition}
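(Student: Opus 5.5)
The plan is to run the $F$-equivariant Sarkisov program starting from the $F$-Mori fibre space $\pi_n\colon\F_n\to\p^1$, and to show that every reduced factorization of $\varphi$ consists only of Type~II links between $\p^1$-bundles over the same base. Such links preserve the fibration, so $\varphi$ preserves $\pi_n$, and Lemma~\ref{lemma: rational fibration} then shows that $\varphi$ is not loxodromic.

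\textbf{Preliminaries.} For $n\geq1$ the ruling is $\Aut(\F_n)$-invariant, since it is the unique ruling. Moreover $\F_n$ has no singular fibres, and $\rk\Pic(\F_n)=2$. Hence $\pi_n$ is a minimal $F$-conic bundle in the sense of Definition~\ref{definition: G-Mori fibre space}.

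Next I determine the action of $F$ on the base. As in Case~(4) of the proof of Theorem~\ref{theorem: psi_{finite}(Bir(P2)) =4}, $F$ embeds in $\GL_2(\C)/\mu_n$. Let $\widetilde F\subseteq\GL_2(\C)$ be its inverse image; it is solvable of derived length at least $4$, hence exactly $4$. The action of $F$ on the base $\p^1$ is the image $P$ of $\widetilde F$ in $\PGL_2(\C)$. The kernel of $\widetilde F\to P$ consists of scalars, so it is abelian, and therefore $\ell(P)\geq 3$. By Lemma~\ref{lemma: psi(PGL_2(C))=3} this gives $P\simeq\Sym_4$. By Lemma~\ref{lemma: size orbits for the actions of A4, S4, and A5 on P1}, every $P$-orbit on the base has even cardinality, namely $6$, $8$, $12$ or $24$. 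This base action is unchanged by any link that identifies the bases.

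\textbf{Invariant along the factorization.} I will show by induction along a reduced factorization of $\varphi$ that every intermediate Mori fibre space is some $\F_m\to\p^1$ with $m$ odd, with the same $\Sym_4$-action on the base, or else is $\p^2$ reached from $\F_1$.

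Consider first a Type~II link from $\F_m$. It blows up an $F$-orbit $\Omega$ lying on pairwise distinct fibres and then contracts the strict transforms of these fibres. The result is again a $\p^1$-bundle $\F_{m'}$. Each elementary transformation at a single point changes the index by $\pm1$, so $m'\equiv m+|\Omega| \pmod 2$. The orbit $\Omega$ maps onto a $P$-orbit in the base, and each fibre over that orbit contains the same number of points of $\Omega$. Hence $|\Omega|$ is a multiple of an even number, so it is even, and $m'$ is again odd.

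The other link types are handled as follows. Since $m$ is never $0$, no Type~IV link can occur. A Type~III link from $\F_m$ exists only when $m=1$: it contracts the $(-1)$-section and lands on $\p^2$ with $F$ fixing the image point $p$. In that case $F\subseteq\PGL_3(\C)$ is intransitive of derived length $4$. Since $\varphi$ ends on $\F_n\to\p^1$ and not on $\p^2$, some further link must start from $\p^2$. By Lemma~\ref{lemma: unique first link in the intransitive linear case}, the only possible link from $\p^2$ is the blow-up of $p$. That link is exactly the inverse of the Type~III link just performed, contradicting reducedness. Hence no Type~III link occurs, and every link in the factorization is of Type~II.

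\textbf{Conclusion and main obstacle.} All links are Type~II elementary transformations that are compatible with the identified base. Therefore $\varphi$ maps fibres of $\pi_n$ to fibres of $\pi_n$, which proves the proposition.

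I expect the main care to be needed in two places. The first is justifying the parity formula for the index after a simultaneous elementary transformation at an orbit. The plan is to reduce it to the single-point case, by performing the transformations one point at a time, non-equivariantly. The second is checking that the hypotheses of Lemma~\ref{lemma: unique first link in the intransitive linear case} still hold on $\p^2$. This holds because $F$ is unchanged, and on $\p^2$ it fixes the point $p$ obtained by contracting the $(-1)$-section.
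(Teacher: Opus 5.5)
Your proposal is correct and follows essentially the same route as the paper's proof: you identify the base action as $\Sym_4$, use the evenness of $\Sym_4$-orbits on $\p^1$ to show that type~II links preserve the parity of the Hirzebruch index (so $\F_0$, and hence any type~IV link, never occurs), and you exclude type~III links by combining the unique-first-link lemma on $\p^2$ with reducedness of the factorization. Your extra details are harmless variants of steps the paper states more briefly: passing through the inverse image $\widetilde F\subseteq\GL_2(\C)$ to get $\ell(P)\geq 3$, and performing the elementary transformations one point at a time to justify the parity count.
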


\begin{proof}
1) Let us begin by showing that the image of $F$ in $\Aut(\p^1)$ is isomorphic to $\Sym_4$. We have $\Aut(\F_n)\simeq
\C^{n+1}\rtimes\bigl(\GL_2(\C)/\mu_n\bigr)$.
Since $\{0\}$ is the only finite subgroup of $\C^{n+1}$, the projection $F\longrightarrow\GL_2(\C)/\mu_n$ is injective. The natural action on the base fits into an exact sequence
\[ 1 \longrightarrow \C^*/\mu_n \longrightarrow \GL_2(\C)/\mu_n
\longrightarrow \PGL_2(\C) \longrightarrow 1. \]
Thus the kernel of the induced action of $F$ on the base is abelian. Since $F$ has derived length $4$, its image on the base has derived length at least $3$. By Lemma~\ref{lemma: psi(PGL_2(C))=3}, it has derived length exactly $3$ and is therefore isomorphic to $\Sym_4$.

2) Factor $\varphi$ into a reduced sequence of $F$-equivariant Sarkisov links and isomorphisms of Mori fibre spaces.

Consider first a type-II link between Hirzebruch surfaces. Such a link is an elementary transformation whose centre is an $F$-orbit
$\Omega$ consisting of points lying on pairwise distinct fibres. Hence projection to the base induces a bijection from $\Omega$ onto an orbit of the image of $F$ in $\Aut(\p^1)$. Since this image is isomorphic to $\Sym_4 $, Lemma~\ref{lemma: size orbits for the actions of A4, S4, and A5 on P1} shows that $|\Omega|$ is even.

An elementary transformation at one point changes the Hirzebruch index by one, up to sign. Consequently, a type-II link centred at $\Omega$ preserves the parity of the Hirzebruch index. Since $n$ is odd, every Hirzebruch surface occurring in the factorization has odd index. In particular, the factorization never reaches $\F_0$. Therefore no link of type~IV can occur.

We next exclude links of type~III. Among Hirzebruch surfaces, such a link can only be the contraction
\[ \F_1\longrightarrow \p^2 \]
of the $(-1)$-section. Suppose that such a link occurs. It cannot be the last link of the factorization, since the target of $\varphi$ is $\F_n$. The induced action of $F$ on the resulting $\p^2$ is faithful and still has derived length $4$. Moreover, the point obtained by contracting the $(-1)$-section is fixed by $F$. Hence Lemma~\ref{lemma: unique first link in the intransitive linear case} shows that the next link must be the blow-up of this point, that is, the inverse type-I link
\[ \p^2\dashrightarrow\F_1. \]
Thus two consecutive links are inverse to each other, up to the intervening isomorphisms of Mori fibre spaces. This contradicts the reducedness of the factorization. Hence no link of type~III occurs.

Since the factorization starts from a conic bundle, a link of type~I could occur only after a link of type~III had first reached a del Pezzo surface. Hence the absence of type-III links also excludes type-I links. We have therefore shown that every link in the factorization is of type~II. All these links are elementary transformations over the same base, and consequently $\varphi$ preserves the ruling $\pi_n$.

The last assertion follows from Lemma~\ref{lemma: rational fibration}.
\end{proof}

\section{The centraliser theorem}
\label{sec:centraliser-proof}

Let $F \subseteq \Bir(\p^2)$ be a finite solvable subgroup with
$\ell(F)=4$. Blanc's classification allows us to assume that $F$ is
contained in one of the eleven maximal algebraic groups listed there.

The proof of Theorem~\ref{theorem: psi_{finite}(Bir(P2)) =4} shows that the families (3), (6), (7), and (9) cannot contain a finite solvable subgroup of derived length $4$. It therefore remains to treat the families (1), (2), (4), (5), (8), (10), and (11). We treat these families below in the order of Blanc's classification.

\subsection{The projective plane}
By Lemma~\ref{lemma: finite solvable subgroups of Aut P2}, there are two possibilities: either $F$ is intransitive (i.e. admits at least one fixed point in $\p^2$), or, up to conjugation, we have $F=H_{216}$.

\vspace{2mm}

\noindent \emph{The intransitive case.}
Let $p \in\p^2$ be a fixed point of $F$, and let
\[ \pi \colon \F_1=\Bl_p(\p^2)\longrightarrow\p^2 \]
be the blow-up of $p$.

Let $\varphi\in\Cent_{\bp}(F)$. Since $p$ is $F$-fixed, the action of $F$ lifts to $\F_1$, and
\[ \widetilde\varphi :=\pi^{-1}\circ\varphi\circ\pi \colon\F_1\dashrightarrow\F_1 \]
is $F$-equivariant. By Proposition~\ref{proposition: equivariant self-maps of odd Hirzebruch surfaces}, applied with $n=1$, the map $\widetilde\varphi$ preserves the ruling of $\F_1$. Hence $\varphi$ preserves the pencil of lines through $p$, and therefore is not loxodromic by Lemma~\ref{lemma: rational fibration}.

\vspace{2mm}

\noindent\emph{The primitive case.}
Up to conjugation, we have $F=H_{216}$. Since every $H_{216}$-orbit in $\p^2$ has cardinality at least~$9$ (see \cite[Proposition~3.17]{Sakovics2019}), Lemma~\ref{lemma: a minimal del Pezzo G-surface of degree d with no orbit of size <d is G-superrigid} shows that $\p^2$ is $H_{216}$-superrigid. It follows that $\Cent_{\bp}(H_{216})$ is contained in $\Aut(\p^2)$ and in particular it contains no loxodromic element.

\subsection{The product of two projective lines}

We now consider family~(2) of Theorem~\ref{theorem: the eleven families of maximal algebraic subgroups of Bir(P2)}, and begin by determining the subgroups of derived length $4$ in this family.

\begin{lemma} \label{lemma: length-four subgroups of Aut(P1xP1)}
Let $\tau=(y,x)$ be the standard involution of $\p^1\times\p^1$. Up to conjugation, there is a unique solvable subgroup of derived length $4$ of
\[ \Aut(\p^1\times\p^1) =\bigl(\PGL_2(\C)\times\PGL_2(\C)\bigr) \rtimes \langle \tau \rangle, \]
namely the group
\[ (\Sym_4\times\Sym_4) \rtimes \langle \tau \rangle. \]
of order $24^2 \times 2 = 1152$.
\end{lemma}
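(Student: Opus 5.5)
Let $G\subseteq \Aut(\p^1\times\p^1)$ be solvable with $\ell(G)=4$, and put $G_0:=G\cap\bigl(\PGL_2(\C)\times\PGL_2(\C)\bigr)$, of index at most $2$ in $G$. Let $p_1,p_2$ be the two projections to $\PGL_2(\C)$. The plan is to show that $G_0$ has derived length $3$, that both projections of $G_0$ are octahedral, that $G$ is not contained in the identity component's normaliser without $\tau$, and finally that $G_0$ is all of $\Sym_4\times\Sym_4$.

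\textbf{Step 1: the structure of $G_0$.} First, $\ell(G_0)\geq 3$, since $G/G_0$ is abelian. Also $G_0\subseteq p_1(G_0)\times p_2(G_0)$, and each factor has derived length at most $3$ by Lemma~\ref{lemma: psi(PGL_2(C))=3}, so $\ell(G_0)\le 3$ and hence $\ell(G_0)=3$. Moreover $G\neq G_0$, since otherwise $\ell(G)\le 3$. So $G$ contains an element $\tau h$ with $h\in \PGL_2(\C)^2$. Conjugation by this element exchanges the two factors, so it exchanges $p_1(G_0)$ and $p_2(G_0)$. These two groups are therefore isomorphic. Since $\ell(G_0)=3$, at least one of them has derived length $3$, so both are isomorphic to $\Sym_4$ by Lemma~\ref{lemma: psi(PGL_2(C))=3}. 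By Lemma~\ref{lemma: finite subgroups of PGL_2(C)}, after conjugating in $\PGL_2(\C)^2$ we may assume $p_1(G_0)=p_2(G_0)=\Sym_4$ for one fixed copy of $\Sym_4\subseteq\PGL_2(\C)$.

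\textbf{Step 2 (the main obstacle): ruling out a proper subdirect product.} I would use Goursat's lemma. Write $N_i:=\ker(p_{3-i}|_{G_0})$, viewed as a normal subgroup of $\Sym_4$. Then $G_0$ is the graph of an isomorphism $\Sym_4/N_1\simeq\Sym_4/N_2$, and $N_1\simeq N_2$ via the swapping element. The normal subgroups of $\Sym_4$ are $1$, $V_4$, $\Alt_4$ and $\Sym_4$, so I would check $\ell(G)$ in each case.

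\begin{itemize}
\item If $N_i=1$, then $G_0\simeq\Sym_4$, and $G$ is an extension of $\Z/2\Z$ by $\Sym_4$. Such an extension has $\ell\le 4$, so this case needs a real argument. I would show $D^3G=1$ as follows. Since $[G,G]\subseteq G_0$ and $G_0\simeq \Sym_4$ is diagonal, $[G,G]$ is either $\Sym_4$ or $\Alt_4$. If it is $\Sym_4$, that is impossible: $\tau h$ acts on $G_0\simeq\Sym_4$ by an automorphism, automatically inner, so $G\simeq\Sym_4\times\Z/2\Z$ and $D^1G=\Alt_4$, giving $\ell(G)=3$. So in either case $\ell(G)=3$.
\item If $N_i=V_4$ or $\Alt_4$, then $G_0$ has index $6$ or $2$ in $\Sym_4^2$, and by the same reasoning $G$ is an extension of an abelian $2$-group by a group of the form $N\times N$ extended by a quotient of $\Sym_3$ or by $\Z/2$. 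I would compute the derived series directly: in each case $D^1G\subseteq \Alt_4\times\Alt_4$ up to diagonal parts, and $D^3G=1$.
\item The only surviving case is $N_i=\Sym_4$, so $G_0=\Sym_4\times\Sym_4$.
\end{itemize}

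The key check, which I expect to be the main obstacle, is the direct derived-series computation in these cases: the derived series of $\Sym_4$ drops through $\Alt_4$ and $V_4$, and the $\tau$-extension adds at most one step only when the full product is present.

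\textbf{Step 3: conclusion.} Now $G=(\Sym_4\times\Sym_4)\rtimes\langle\tau h\rangle$, where $h=(a,b)$ normalises the product. Since $\Sym_4$ is self-normalising in $\PGL_2(\C)$, we have $a,b\in\Sym_4$, so we may take $h=1$. Finally, this group does have derived length $4$. Its commutator subgroup contains $(\Alt_4\times\Alt_4)$ together with the antidiagonal elements $(s,s^{-1})$, giving $\{(x,y):\sgn x=\sgn y\}$. Continuing, we get $\Alt_4\times\Alt_4$, then $V_4\times V_4$, then $1$, which is four steps. The order is $24^2\cdot 2=1152$.
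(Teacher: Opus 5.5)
The overall strategy is the paper's: Goursat's lemma for $G_0$, then the self-normalising property of $\Sym_4$ to put $\tau$ in $G$, then a commutator computation. Steps~1 and~3 and your treatment of $N_i=1$ are correct. In Step~3, add one line justifying $N_{\PGL_2(\C)}(\Sym_4)=\Sym_4$: a nontrivial element centralising $\Sym_4$ would have a $\Sym_4$-invariant fixed set of at most two points, whereas $\Sym_4$-orbits on $\p^1$ have at least $6$ points; and $\Sym_4$ is centreless with only inner automorphisms.

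The genuine gap is Step~2 for $N_i\in\{V_4,\Alt_4\}$, which is the whole uniqueness content of the lemma. There you only announce that a direct computation would give $D^3G=1$. ``The same reasoning'' as for $N_i=1$ does not transfer: it used that $G_0\simeq\Sym_4$ has trivial centre and only inner automorphisms, and for $G_0=\{(a,b):\sgn a=\sgn b\}$ the swap is an outer automorphism. The naive bounds along $N_1\times N_2\subseteq G_0\subseteq G$ only give $\ell(G)\leq 4$. Your hedge ``$D^1G\subseteq\Alt_4\times\Alt_4$ up to diagonal parts'' sits exactly on the critical point. In the full group, it is the elements $(s,s^{-1})$ with $s$ odd lying in $D^1G$ that raise the length to $4$. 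So you must prove that, when $G_0$ is proper, no element of $D^1G$ has odd coordinates.

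The missing idea is the sign character. If $G_0\neq\Sym_4\times\Sym_4$, the Goursat quotient $\Sym_4/N_1$ is $\Sym_4$, $\Sym_3$ or $\Z/2\Z$. Each has a unique subgroup of index $2$ and $\sgn$ factors through it, so the Goursat isomorphism respects signs and $G_0\subseteq E:=\{(a,b):\sgn a=\sgn b\}$. Next, the swapping element $t=(g,h)\tau$ conjugates $p_2(G_0)=\Sym_4$ onto $p_1(G_0)=\Sym_4$. So already after Step~1 you have $g,h\in N_{\PGL_2(\C)}(\Sym_4)=\Sym_4$, i.e.\ $G\subseteq W:=(\Sym_4\times\Sym_4)\rtimes\langle\tau\rangle$. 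The quotient $W/(\Alt_4\times\Alt_4)\simeq(\Z/2\Z)^2\rtimes\langle\tau\rangle$ is dihedral of order $8$, and its centre is the image of $E$. Hence the image of $G=\langle G_0,t\rangle$ is generated by central elements and one further element, so it is abelian. Thus $D^1G\subseteq\Alt_4\times\Alt_4$ and $\ell(G)\leq 3$.

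This is the paper's argument. With your $G_0$ in place of the paper's $N$, it reads: $G_0'\subseteq\Alt_4\times\Alt_4$, and $[t,n]\in\Alt_4\times\Alt_4$ for $n\in G_0$ because conjugation by $g,h$ preserves $\sgn$. It disposes of all three proper cases, including $N_i=1$, at once, so no case-by-case derived-series computation is needed.
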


\begin{proof}
Let $F \subseteq \Aut (\p^1 \times \p^1)$ be solvable of derived length $4$. Put
\[ B := \PGL_2(\C) \times \PGL_2(\C),\]
and set  $N:=F\cap B$. Every solvable subgroup of $B$ has derived length at most $3$, by
Lemma~\ref{lemma: psi(PGL_2(C))=3} applied to the two projections. Hence $F\not\subseteq B$, and therefore
\[ F/N\simeq\Z/2\Z. \]
Since $F'\subseteq N$ and $\length(F)=4$, it follows that
\[ \length(N)=3. \]

Let
\[ P_i:=\operatorname{pr}_i(N)\subseteq\PGL_2(\C), \qquad i=1,2. \]
Choose an element
\[ t=(g,h)\tau\in F\setminus N. \]
Since $N$ is normal in $F$, conjugation by $t$ gives
\[ P_1=gP_2g^{-1}, \qquad P_2=hP_1h^{-1}. \]
Thus $P_1$ and $P_2$ have the same derived length. Since
$N\subseteq P_1\times P_2$ has derived length $3$, both $P_i$ have derived length $3$. By Lemma~\ref{lemma: psi(PGL_2(C))=3}, they are isomorphic to $\Sym_4$. After conjugating independently in the two factors, we may therefore
assume that
\[ P_1=P_2=\Sym_4. \]
Thus $N$ is a subdirect product of $\Sym_4\times\Sym_4$, i.e. $ N \subseteq \Sym_4\times\Sym_4$ and $N$ maps surjectively on the two factors $\Sym_4$.

\vspace{2mm}

1) Let us prove that we have $N = \Sym_4\times\Sym_4$.

Suppose for contradiction that it is not the case.  By Goursat's lemma there exist two normal subgroups $N_1,N_2$ of $\Sym_4$ and an isomorphism $\varphi \colon \Sym_4 / N_1 \to \Sym_4 / N_2$,  such that if $\pi_i \colon \Sym_4 \to \Sym_4 / N_i$, $1 \le i \le 2$, denote the canonical surjections, then $N$ is the fibre product of the morphisms $ \varphi \circ \pi_1$ and $\pi_2 \colon \Sym_4 \to \Sym_4 / N_2$, i.e.
\[ N = \bigl\{ (a,b) \in \Sym_4 \times \Sym_4 \mid  \varphi \circ \pi_1 (a) = \pi_2 (b) \bigr\}. \]
Since the normal subgroups of $\Sym_4$ are
\[ 1,\qquad (\Z/2\Z)^2,\qquad \Alt_4,\qquad \Sym_4, \]
the fibre product is over one of the groups
\[ \Sym_4,\qquad \Sym_3,\qquad \Z/2\Z. \]

Each of these groups has a unique subgroup of index $2$. Consequently, the isomorphism $\varphi$ between the two quotients preserves the sign character, and hence we have
\[ N \subseteq \bigl\{ (a,b) \in \Sym_4 \times \Sym_4 \mid \operatorname{sgn}(a) = \operatorname{sgn}(b) \bigr\}. \]
Let us prove that this inclusion yields $F' \subseteq  \Alt_4 \times \Alt_4$.

We have $ N' \subseteq \Alt_4\times\Alt_4$.

For $n=(a,b)\in N$, we have
\begin{equation}  tnt^{-1} = \bigl( gbg^{-1}, hah^{-1} \bigr). \label{equation: tnt^{-1}} \end{equation}
Conjugation by $g$ and $h$ preserves the sign character, and
$\operatorname{sgn}(a)=\operatorname{sgn}(b)$. Hence
\[ [t,n] \in \Alt_4\times\Alt_4. \]
Since $F$ is generated by $N$ and $t$, it follows that we have actually shown the inclusion 
\[ F ' \subseteq \Alt_4 \times \Alt_4. \]
The latter group has derived length $2$, so $\length(F)\leq3$, a
contradiction. Therefore we have proven the equality $ N=\Sym_4\times\Sym_4$.

\vspace{2mm}

2) Let us now prove that $\tau$ belongs to $F$.

We first claim that we have
\[ N_{\PGL_2(\C)}(\Sym_4)=\Sym_4. \]
Indeed, the centraliser of $\Sym_4$ in $\PGL_2(\C)$ is trivial. For if a non-trivial element of $\PGL_2(\C)$ commuted with $\Sym_4$, its set of fixed points in $\p^1$, which has cardinality at most $2$, would be $\Sym_4$-invariant. This is impossible, since every $\Sym_4$-orbit in $\p^1$ has cardinality at least $6$ by Lemma~\ref{lemma: size orbits for the actions of A4, S4, and A5 on P1}. Now conjugation gives a homomorphism
\[ N_{\PGL_2(\C)}(\Sym_4)\longrightarrow\Aut(\Sym_4) \]
with trivial kernel. Since
\[ \Aut(\Sym_4)=\operatorname{Inn} (\Sym_4), \]
every element of the normalizer differs from an element of $\Sym_4$ by an element centralizing $\Sym_4$. Since $\Sym_4$ is centerless the claim follows.

Since $t$ normalizes $N$, the formula \eqref{equation: tnt^{-1}} above shows that $g,h$ belong to $N_{\PGL_2(\C)}(\Sym_4)$. Hence, they belong to $\Sym_4$. So $(g,h)\in N$ and $\tau=(g,h)^{-1}t$ belongs to $F$.

\vspace{2mm}

Consequently, we have proven the equality
\[ F=(\Sym_4 \times\Sym_4) \rtimes \langle\tau\rangle, \]
as claimed.

\vspace{2mm}

3) Let us now prove that we have $\ell (F) =4$.

The group $F$ is solvable of derived length at most $3+1 = 4$. Hence, it remains to prove that we actually have $\length (F) = 4$. For every $g\in \Sym_4$ the commutator
\[ [(g,1),\tau]=(g,g^{-1}) \]
belongs to the derived subgroup. Hence the derived subgroup projects surjectively onto $\Sym_4$, so it has derived length at least $3$. Therefore the whole group has derived length at least $4$.
\end{proof}

We now conclude the proof in this case. By Lemma~\ref{lemma: length-four subgroups of Aut(P1xP1)}, after conjugation, we have
\[ F=(\Sym_4 \times \Sym_4) \rtimes \langle\tau\rangle. \]
Since $\tau$ exchanges the two rulings, we have
\[ \rk\Pic(\p^1\times\p^1)^F=1, \]
so $\p^1\times\p^1$ is a minimal del Pezzo $F$-surface. By Lemma~\ref{lemma: size orbits for the actions of A4, S4, and A5 on P1}, every $\Sym_4$-orbit in $\p^1$ has cardinality at least $6$. Hence, every $(\Sym_4\times\Sym_4)$-orbit in $\p^1\times\p^1$ has cardinality at least
$6 \times 6=36$. This shows a fortiori that every $F$-orbit in $\p^1\times\p^1$ has cardinality at least~$36$. Since $K_{\p^1\times\p^1}^2=8$, Lemma~\ref{lemma: a minimal del Pezzo G-surface of degree d with no orbit of size <d is G-superrigid} shows that $\p^1\times\p^1$ is $F$-superrigid. Hence we have
\[ \Cent_{\Bir(\p^1\times\p^1)}(F) \subseteq \Aut(\p^1\times\p^1), \]
and in particular $\Cent_{\bp}(F)$ contains no loxodromic element.

\subsection{Hirzebruch surfaces}

We now consider family~(4) of Theorem~\ref{theorem: the eleven families of maximal algebraic subgroups of Bir(P2)}.

\begin{lemma} \label{lemma: Hirzebruch centraliser}
Let $n\geq 1$, and let $F\subseteq\Aut(\F_n)$ be a finite solvable
subgroup of derived length $4$. Then $n$ is odd and every element of $\Cent_{\bp}(F)$ preserves the standard ruling of $\F_n$. In particular, $\Cent_{\bp}(F)$ contains no loxodromic element.
\end{lemma}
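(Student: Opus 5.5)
The proof has two parts: show that $n$ must be odd, then apply Proposition~\ref{proposition: equivariant self-maps of odd Hirzebruch surfaces}.

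\emph{Parity of $n$.} As in step 1) of that proposition, $F$ embeds in $\GL_2(\C)/\mu_n$. Let $\widetilde F\subseteq\GL_2(\C)$ be its inverse image, a finite solvable group. Since $\widetilde F\to F$ has central kernel $\mu_n$, we get $\ell(\widetilde F)\geq 4$, hence $\ell(\widetilde F)=4$ because $\psi(\GL_2(\C))=4$. I will identify $\Aut(\F_n)$ with $\C^{n+1}\rtimes(\GL_2(\C)/\mu_n)$, where a matrix $A$ acts on the fibre coordinate through the weighted action on $\F_n=\bigl((\C^2\setminus 0)\times(\C^2\setminus 0)\bigr)/(\C^*)^2$. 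In this model a scalar $\lambda I$ acts on $\F_n$ by multiplying the fibre coordinate by $\lambda^{n}$, up to the standard normalization, so its image in $\GL_2(\C)/\mu_n$ is trivial exactly when $\lambda\in\mu_n$. By Lemma~\ref{lemma: A solvable subgroup of derived length 4 of GL_2(C) contains the subgroup {I,-I}}, every solvable subgroup of $\GL_2(\C)$ of derived length $4$ contains $-I$. The intended argument for parity is as follows. Suppose $n$ is even. Then $-I\in\mu_n$, so $-I$ becomes trivial in $F$. Consider the derived series of $\widetilde F$: $D^3\widetilde F$ is a nontrivial subgroup of the scalars (the proof of that lemma shows $D^2\widetilde F\supseteq\{\pm I\}$ with $D^3\widetilde F=\{\pm I\}$), and so it is killed in $F$. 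This would force $\ell(F)\leq 3$, a contradiction. The main obstacle is to check $D^3\widetilde F=\{\pm I\}$ exactly. I would argue that $D^1\widetilde F\subseteq\SL_2(\C)$ maps onto $\Alt_4$, so $D^1\widetilde F$ is the binary tetrahedral group $2T$, whose derived series is $2T\triangleright Q_8\triangleright\{\pm I\}\triangleright 1$. Then $D^3\widetilde F=\{\pm I\}\subseteq\mu_n$, so $D^3F=1$ when $n$ is even. Hence $n$ is odd.

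The case $n=0$ is excluded by the hypothesis $n\geq1$. Once $n$ is odd, Proposition~\ref{proposition: equivariant self-maps of odd Hirzebruch surfaces} applies directly. Let $\varphi\in\Cent_{\bp}(F)$ and conjugate it to a birational self-map of $\F_n$. Since $\varphi$ commutes with $F$, it is $F$-equivariant, so the proposition shows that it preserves the ruling $\pi_n$. Lemma~\ref{lemma: rational fibration} then shows that $\varphi$ is not loxodromic.

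The one point needing care is that the centraliser is taken in $\bp$, whereas the proposition concerns $\Bir(\F_n)$. This is handled by fixing a birational map $\F_n\dashrightarrow\p^2$ conjugating $\Aut(\F_n)$ into $\bp$. Centraliser elements then correspond exactly to $F$-equivariant elements of $\Bir(\F_n)$, and being loxodromic is invariant under conjugation.
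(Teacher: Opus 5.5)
Your proof is correct. Its second half, conjugating into $\Bir(\F_n)$ and applying Proposition~\ref{proposition: equivariant self-maps of odd Hirzebruch surfaces} and Lemma~\ref{lemma: rational fibration}, is exactly the paper's. The parity step is argued differently.

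\emph{The paper's route.} For even $n$, the map $A\mapsto([A],\det(A)^{n/2})$ gives an isomorphism $\GL_2(\C)/\mu_n\simeq\PGL_2(\C)\times\C^*$ (the paper cites Urech for this). Every solvable subgroup of the target has derived length at most $3$, so $F$ cannot embed.

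\emph{Your route.} You lift $F$ to $\widetilde F\subseteq\GL_2(\C)$ and use the proof of the lemma that a derived-length-$4$ solvable subgroup of $\GL_2(\C)$ contains $-I$. This identifies $D^1\widetilde F$ with the binary tetrahedral group, so $D^3\widetilde F=\{\pm I\}$. For even $n$ this subgroup lies in $\mu_n$, hence $D^3F=1$, a contradiction.

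\emph{Comparison.} Your version needs only results already proved in the paper plus the structure of $2T$. It also pinpoints why parity matters: the last nontrivial term of the derived series is $\{\pm I\}$. This is the same element $-I$ that drives Lemma~\ref{lemma: unique first link in the intransitive linear case}. The paper's version is shorter and yields the cleaner global statement $\psi(\GL_2(\C)/\mu_n)=3$ for even $n$.

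\emph{Two small imprecisions.} First, the proof of that lemma directly gives $\{\pm I\}\subseteq D^1\widetilde F$, not $D^2\widetilde F$. Your subsequent $2T$ computation supplies what you actually need, so nothing breaks. Second, $\ell(\widetilde F)\geq 4$ follows from the surjectivity of $\widetilde F\to F$; centrality of the kernel plays no role there.
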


\begin{proof}
Let us first prove that $n$ is odd. Suppose for contradiction that $n$ is even. We have
\[ \Aut(\F_n)\simeq \C^{n+1}\rtimes\bigl(\GL_2(\C)/\mu_n\bigr). \]
Since $\C^{n+1}$ has no nontrivial finite subgroup, the projection
\[ F \longrightarrow \GL_2(\C)/\mu_n \]
is injective. Since $n$ is even, we  have
\[ \GL_2(\C)/ \mu_n\simeq \PGL_2(\C) \times \C^* \]
(see \cite[Lemma~3.6]{Urech2021}). A contradiction, because $\psi \bigl( \PGL_2(\C)\times\C^* \bigr) =3$. Thus $n$ is odd.

Let now $\varphi\in\Cent_{\bp}(F)$. Since $\varphi$ commutes with $F$, it is an $F$-equivariant birational self-map of $\F_n$. By
Proposition~\ref{proposition: equivariant self-maps of odd Hirzebruch surfaces}, the map $\varphi$ preserves the standard ruling of $\F_n$, and hence it is not loxodromic (see Lemma~\ref{lemma: rational fibration}).
\end{proof}

\subsection{Exceptional conic bundles}

We now consider family~(5) of Theorem~\ref{theorem: the eleven families of maximal algebraic subgroups of Bir(P2)}, consisting of exceptional conic bundles with at least four singular fibres.

Let $(S,\pi)$ be such an exceptional conic bundle, and let
$\Delta\subseteq\p^1$ be the set of points over which $\pi$ has a singular fibre. Write $|\Delta|=2n$, where $n\geq2$.

We use the notation $\rho$, $\sigma$ and $T$ introduced in Case~(5) of the proof of Theorem~\ref{theorem: psi_{finite}(Bir(P2)) =4}. Thus $\rho \colon \Aut(S,\pi) \to H_\Delta$ is the action on the base, $\sigma \colon \Aut(S,\pi) \to \Z/2\Z$ is the induced action on the pair of exceptional sections $\{s_1,s_2\}$, and $T=\ker(\rho,\sigma)\simeq\C^*$ is a one-dimensional torus.

Recall that we have the following short exact sequence:
\begin{equation} 1 \longrightarrow T \longrightarrow \Aut(S,\pi)  \xrightarrow{(\rho,\sigma)} H_\Delta \times \Z/2\Z \longrightarrow 1. \label{1st short exact sequence} \end{equation}
We will also use the following exact sequence from \cite[Theorem~2]{Blanc2009}:
\begin{equation}  1 \longrightarrow T \rtimes \Z / 2 \Z \longrightarrow \Aut (S,\pi) \longrightarrow H_\Delta \longrightarrow 1. \label{2nd short exact sequence} \end{equation}

\vspace{2mm}

Let $F \subseteq \Aut(S,\pi)$ be a finite solvable group of derived length $4$. We will show that $\Cent_{\bp}(F)$ contains no loxodromic element.

\vspace{2mm}

Taking the intersection with $F$, the short exact sequence \eqref{1st short exact sequence}  shows that $P:= \rho (F)  \subseteq  H_{\Delta} \subseteq  \PGL_2 (\C) $ has derived length $3$, and hence Lemma~\ref{lemma: psi(PGL_2(C))=3} gives
\[ P \simeq \Sym_4. \]
Since $\Delta$ is $P$-invariant, Lemma~\ref{lemma: size orbits for the actions of A4, S4, and A5 on P1} gives $|\Delta|\geq 6$, i.e.
\[ n \ge 3.\]

We first prove the result under the additional assumption that the conic bundle $(S,\pi)$ is $F$-minimal. We will then reduce the general case to this one.

\begin{lemma}[Fibration preservation for $F$-minimal exceptional conic bundles]
\label{lemma: minimal exceptional conic bundle centraliser}
Let $\pi \colon S\to\p^1$ be an exceptional conic bundle with at least $2n \ge 4$ singular fibres, and let $F \subseteq \Aut(S,\pi)$ be a finite solvable subgroup of derived length $4$. Assume that $(S,\pi)$ is $F$-minimal. Then every $F$-equivariant birational self-map
\[ \varphi \colon S \dashrightarrow S \]
preserves the fibration $\pi$. In particular, it is not loxodromic.
\end{lemma}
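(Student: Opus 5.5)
We will argue with the equivariant Sarkisov program, following the pattern of Proposition~\ref{proposition: equivariant self-maps of odd Hirzebruch surfaces}. Since $(S,\pi)$ is $F$-minimal, it is a $G$-Mori fibre space with $G=F$. Factor a non-isomorphic $F$-equivariant $\varphi$ into a reduced sequence of $F$-Sarkisov links. The goal is to show that every link is of type~II, i.e.\ an elementary transformation over the same base, so that $\pi$ is preserved. Lemma~\ref{lemma: rational fibration} then excludes loxodromicity.

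\textbf{Step 1: type~II links stay in a controlled class.} Recall that $P=\rho(F)\simeq\Sym_4$ and $2n=|\Delta|\geq 6$. Consider a type~II link centred at an $F$-orbit $\Omega$ on distinct smooth fibres. By Lemma~\ref{lemma: size orbits for the actions of A4, S4, and A5 on P1}, $\Omega$ maps bijectively to a $\Sym_4$-orbit, so $|\Omega|\geq 6$. Each intermediate conic bundle $S'$ has the same base action $P\simeq\Sym_4$ and the same set $\Delta$ of $2n\geq 6$ singular fibres, all still $F$-twisted, since elementary transformations away from singular fibres do not touch them. Hence every surface occurring along the factorization is an $F$-minimal conic bundle with $K_{S'}^2=8-2n\leq 2$.

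\textbf{Step 2: excluding type~III and type~IV links.} A type~III link contracts to a del Pezzo surface, so $S'$ would have to be a del Pezzo surface; more precisely, the relative Mori cone would need the other extremal ray to give a birational contraction onto a del Pezzo surface of degree $K_{S'}^2+|\text{contracted}|$. The standard criterion (Iskovskikh, cf.\ \cite[\S2]{CheltsovTschinkelZhang2025}) says that a minimal $G$-conic bundle with $K^2\leq 0$ admits no link of type~III or IV, since it is not a del Pezzo surface and $-K$ is not big enough. Here $K_{S'}^2=8-2n\leq 2$. When $K^2\leq 0$, i.e.\ $n\geq 4$, this is immediate. The delicate cases are $n=3$, $K^2=2$, and possibly $K^2=1$ after ... (not arising, because $K^2$ is constant). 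For $K^2=2$, a type~III or IV link would require $S'$ to be a del Pezzo surface of degree $2$, or to carry a second conic-bundle structure exchanged appropriately. I would rule this out as follows. First, the two exceptional sections of self-intersection $-n=-3$ persist up to the modification by elementary transformations. A $(-3)$-curve, or after transformations a negative section, cannot live on a del Pezzo surface. More robustly, a degree $2$ del Pezzo with an $F$-action of derived length $4$ contradicts Remark~\ref{remark: remark on cases 3,6,7,9} (case (9)), although that remark concerns maximal groups, so I would instead use the direct argument from Case~(9): the $\Sym_4$ image must preserve a degree~$4$ divisor on a line, which is impossible. Type~I links cannot occur before a type~III link. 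Hence all links are of type~II.

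\textbf{Main obstacle.} The crux is Step~2 in the borderline case $n=3$, i.e.\ excluding a second conic-bundle structure (type~IV) or a del Pezzo contraction on surfaces of degree $2$ reached by elementary transformations. For type~IV, I would first try to show that the class $-K_{S'}-f$ (with $f$ the fibre) would need to be $F$-invariant and define another pencil. An invariant pencil whose fibres meet $f$ with intersection $2$ would give a second $\Sym_4$-action on a base $\p^1$, whose $6$ singular fibres are permuted compatibly. I would then check this against the twisting of all singular fibres by $-I$-type elements of $F$, using Lemma~\ref{lemma: A solvable subgroup of derived length 4 of GL_2(C) contains the subgroup {I,-I}}-style arguments. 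If this fails, a fallback is to observe that the unique $F$-invariant pair of negative sections forces the other extremal contraction to contract them, which cannot produce a smooth del Pezzo.
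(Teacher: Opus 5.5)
Your framework (a reduced $F$-Sarkisov factorization in which every link is shown to be of type~II) is the paper's. However, the step you yourself flag as the crux is not closed.

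\textbf{The persistence claim.} You assert that the two $(-n)$-sections ``persist'' along the elementary transformations. This is unjustified, and false for general centres. A centre point on $s_1$ lowers $s_1^2$ and raises $s_2^2$ by one, while a point off $s_1\cup s_2$ raises both. So for $n=3$, a link centred at an orbit $\Omega$ avoiding the sections yields sections of self-intersection $-3+|\Omega|\geq 3$, and the new conic bundle need not be exceptional, nor obviously non-del Pezzo.

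The missing idea is the paper's choice of $1\neq t\in D^3F\subseteq T$.
\begin{enumerate}
\item Since $t$ acts trivially on the base and $\Omega$ meets each fibre at most once, $t$ fixes $\Omega$ pointwise. Hence $\Omega\subseteq s_1\cup s_2$.
\item If $\ker(\rho|_F)\not\subseteq T$, a fibrewise element exchanging $s_1$ and $s_2$ would put two points of $\Omega$ in one fibre, so no link exists at all.
\item Otherwise $F$-minimality forces $\sigma|_F=\sgn\circ\rho|_F$. The stabilisers in $P$ of the base points of $\Omega$ then lie in $\Alt_4$, and $\Omega$ is split evenly between $s_1$ and $s_2$.
\end{enumerate}
Hence $s_i^2=-n$ is preserved and $t$ again fixes exactly the new section points. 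The induction gives $(-K)\cdot s_i=2-n<0$ on every intermediate surface. This kills type~IV via Blanc's Lemma~7.1.1, and type~III is excluded by Iskovskikh's list.

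\textbf{Type IV at $n=3$.} You leave this open, and the pencil you propose is the wrong class. If $\rk\Pic(S')^F=2$ and $K_{S'}^2=2$, the only $F$-invariant conic classes in $\Q K_{S'}\oplus\Q f$ are $f$ and $-2K_{S'}-f$, whereas $(-K_{S'}-f)^2=-2$. The fact you need is Blanc's Lemma~7.1.1: an $F$-minimal conic bundle admitting a second $F$-invariant conic-bundle structure is a del Pezzo surface. Pair it with Iskovskikh's list for type~III, which allows such links only from $\F_1$ or when $K^2\in\{3,5,6\}$.

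With these, your ``more robust'' remark actually completes the proof by a different route:
\begin{enumerate}
\item $K^2=8-2n$ is constant along type~II links, and it is $\leq 0$ for $n\geq 4$.
\item For $n=3$, use the Case~(9) argument. It only uses that $Q_S$ is a smooth quartic, so it applies to every del Pezzo surface of degree~$2$, and it shows no such surface carries a finite solvable group of derived length~$4$.
\end{enumerate}
That route avoids tracking the sections, at the price of a separate group-theoretic input for $n=3$. The paper's route is uniform in $n\geq 3$ and proves more: every intermediate surface is again an $F$-minimal exceptional conic bundle with the same invariants.
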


\begin{proof}
The short exact sequence \eqref{1st short exact sequence} shows that $D^3F$ is contained in $T$. Choose
\[ 1\neq t\in D^3F\subseteq T. \]
The element $t$ acts trivially on the base, and its two fixed points
on every smooth fibre are the points lying on $s_1$ and $s_2$.

Choose a reduced $F$-equivariant Sarkisov factorization of $\varphi$. We prove along this factorization that every intermediate conic bundle is again an $F$-minimal exceptional conic bundle with the same number $2n$ of singular fibres, and with two $t$-fixed sections of self-intersection $-n$ whose intersections with each smooth fibre are precisely the two fixed points of $t$.

Suppose that this holds for the current conic bundle, which we again denote by $(S,\pi)$, with distinguished sections $s_1,s_2$. Recall that we have proven above that we actually have $n \geq 3 $. Hence, we have
\[ K_S^2=8-2n \leq 2. \]
A link of type~III starting from a conic bundle can occur only from
$\F_1$ or when $K_S^2\in\{3,5,6\}$; see \cite[Theorem~2.6]{Iskovskikh1996}. Hence no link of type~III can occur. Moreover, by adjunction,
\[ (-K_S)\cdot s_i=s_i^2+2=2-n<0, \]
so $S$ is not a del Pezzo surface. By
\cite[Lemma~7.1.1]{Blanc2009}, an $F$-minimal conic bundle admitting a second $F$-invariant conic-bundle structure is a del Pezzo surface. Thus no link of type~IV can occur. Consequently, any next link is of type~II over the same base.

Such a link is an elementary transformation centred at an $F$-orbit $\Omega$ of points lying on pairwise distinct smooth fibres.

Since $t$ preserves $\Omega$ and acts trivially on the base, every point of $\Omega$ is fixed by $t$. Hence
\[ \Omega\subseteq s_1\cup s_2. \]

Suppose first that $\ker(\rho|_F)$ is not contained in $T$. There exists then an element
\[ \tau \in \ker(\rho|_F) \setminus T,\]
i.e.\ an element of $F$ which acts trivially on the base and exchanges the two sections $s_1$ and $s_2$.  For any $x\in\Omega$, the points $x$ and $\tau(x)$ are distinct points of $\Omega$ lying on the same fibre, a contradiction. Thus no link can occur in this case.

Suppose now that  $\ker(\rho|_F)$ is contained in $T$. Then $\sigma$ factors through a homomorphism
\[ \bar\sigma\colon P\simeq\Sym_4\longrightarrow\Z/2\Z. \]
Since $(S,\pi)$ is $F$-minimal, Lemma~\ref{lemma: characterization of minimal G-conic bundles via twistedness}
shows that all its singular fibres are $F$-twisted. If $\bar\sigma$
were trivial, every element of $F$ would preserve each exceptional section and hence each component of every singular fibre, a contradiction. Therefore $\bar\sigma$ is nontrivial, and hence it is the sign character.

Let $x\in\Omega$ and put $p:=\pi(x)$. If $\Stab_P(p)$ contained an odd permutation, a lift to $F$ would fix $p$ on the base and exchange $s_1$ and $s_2$. It would therefore send $x$ to a second point of $\Omega$ on the same fibre, a contradiction. Hence we have
\[ \Stab_P(p)\subseteq\Alt_4. \]
The projection $\pi \colon S \to \p^1$ induces a bijection $\Omega\to P \cdot p$. Since left multiplication by any fixed odd element of $P$ exchanges the even and odd cosets of $\Stab_P(p)$, exactly half of the points of $\Omega$ lie on $s_1$ and half lie on $s_2$.

An elementary transformation centred at a point of $s_1$ decreases $s_1^2$ by one and increases $s_2^2$ by one, and conversely for a centre point on $s_2$. Since the centre is balanced, the two self-intersections remain equal to $-n$. The number of singular fibres is unchanged, so the target is again an exceptional conic bundle with $2n$ singular fibres.

The transformed sections are $t$-fixed and, on every smooth fibre, their intersection points are again precisely the two fixed points of $t$.  Indeed, near a point of the centre we may choose local coordinates $(u,v)$ such that
\[ \pi(u,v)=u,\qquad t(u,v)=(u,\lambda v),\qquad \lambda\neq1. \]
On the exceptional divisor of the blow-up, $t$ acts by
$[u:v] \mapsto [u:\lambda v]$. Its two fixed points correspond to the directions of the fixed section and of the fibre. After contracting the strict transform of the fibre, these give precisely the intersection points of the two transformed sections with the new fibre.

The actions of $F$ on the base and on the two transformed sections are unchanged. Since the target of a Sarkisov link is $F$-minimal, the induction continues. It follows that every link in the factorization is of type~II over the same base. Hence $\varphi$ preserves the fibration $\pi$.
\end{proof}

We are now ready to state and prove the main result of this section.

\begin{lemma} \label{lemma: exceptional conic bundle centraliser}
Let $(S,\pi)$ be an exceptional conic bundle with $2n \ge 4$ fibres, and let $F\subseteq\Aut(S,\pi)$ be a finite solvable subgroup of derived length $4$. Then $\Cent_{\bp}(F)$ contains no loxodromic element.
\end{lemma}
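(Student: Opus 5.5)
We reduce to Lemma~\ref{lemma: minimal exceptional conic bundle centraliser} by $F$-equivariantly contracting to an $F$-minimal model, and then treat whatever model we land on.

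\textbf{Step 1: run the equivariant MMP.} Let $\varphi\in\Cent_{\bp}(F)$, viewed as an $F$-equivariant birational self-map of $S$. If $(S,\pi)$ is not $F$-minimal, Lemma~\ref{lemma: characterization of minimal G-conic bundles via twistedness} gives an $F$-untwisted singular fibre. Recall from the previous subsection that $P=\rho(F)\simeq\Sym_4$.

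\textbf{Step 2: untwisted fibres come in $F$-orbits and give a conic-bundle contraction.} Consider the $F$-orbit of untwisted singular fibres. In each such fibre $E_1+E_2$, exactly one component meets $s_1$ (Definition~\ref{definition: exceptional conic bundle}). Since the fibre is untwisted, $F$ preserves the set of components meeting $s_1$ together with their images. I would contract one component in each untwisted fibre of the orbit, choosing them consistently so that the choice is $F$-stable; this is possible because the stabiliser of such a fibre fixes each of its components. The result is an $F$-equivariant contraction $S\to S'$ over the same base. Its effect on the sections depends on the choice: contracting components meeting $s_1$ raises $s_1^2$ by one per fibre, while $s_2^2$ is unchanged. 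I would choose the components according to $\sigma$, exactly as in the balancing argument of Lemma~\ref{lemma: minimal exceptional conic bundle centraliser}.

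\textbf{Step 3: identify the possible minimal models.} Iterate Step~2 until all remaining singular fibres are twisted. The $2m$ singular fibres that survive form a $P$-invariant subset of $\Delta$. Since every $\Sym_4$-orbit on $\p^1$ has size at least $6$, either $m\geq 3$ or $m=0$. There are two cases.
\begin{enumerate}
\item[(a)] $m\geq3$. The model is again exceptional, with $-m$ sections when the contraction is balanced, so Lemma~\ref{lemma: minimal exceptional conic bundle centraliser} applies to the conjugated $F$. Hence the conjugate of $\varphi$ preserves a fibration and is not loxodromic by Lemma~\ref{lemma: rational fibration}.
\item[(b)] $m=0$. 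The model is a Hirzebruch surface $\F_k$ carrying $F$. If $k\geq1$, Lemma~\ref{lemma: Hirzebruch centraliser} concludes. If $k=0$, then $F$ acts on $\p^1\times\p^1$ preserving a ruling, so $F\subseteq\PGL_2\times\PGL_2$, which has $\psi=3$ (Lemma~\ref{lemma: length-four subgroups of Aut(P1xP1)}, first step). This contradicts $\ell(F)=4$, so $k=0$ cannot occur.
\end{enumerate}
Conjugating back along the contraction, $\varphi$ is not loxodromic in either case.

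\textbf{Main obstacle.} The delicate point is Step~2: ensuring the contracted surface is still \emph{exceptional}, with equal self-intersections $-m$. When $\ker(\rho|_F)\not\subseteq T$, an element swaps $s_1$ and $s_2$ over every point of the base, so every fibre is twisted; hence $F$ is already minimal. When $\ker(\rho|_F)\subseteq T$ and $\bar\sigma$ is trivial, every fibre is untwisted. We can then contract all components meeting $s_1$ in half the fibres and all components meeting $s_2$ in the other half. The catch is that these halves must be $F$-orbits; if they are not, we instead contract everything to reach some $\F_k$, which Step~3(b) handles. When $\bar\sigma=\operatorname{sgn}$, the odd elements of fibre stabilisers twist that fibre, so the untwisted orbits are exactly those whose base-point stabiliser lies in $\Alt_4$. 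The same coset argument as in Lemma~\ref{lemma: minimal exceptional conic bundle centraliser} then shows the contraction is balanced. I expect checking these subcases carefully, including the $\F_k$ endpoints, to be the main work.
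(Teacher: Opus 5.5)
Your proof is correct. Its backbone is the paper's: run the relative $F$-equivariant MMP, use the sign/coset argument to see that each contracted untwisted orbit is balanced, exclude $\F_0$ via $\psi(\PGL_2(\C)\times\PGL_2(\C))=3$, and conclude with Lemma~\ref{lemma: minimal exceptional conic bundle centraliser} or Lemma~\ref{lemma: Hirzebruch centraliser}.

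The genuine difference is the case $\ker(\rho|_F)\not\subseteq T$. The paper handles it separately, as its Reduction~1. It cites Blanc's result that the fibrewise element $\tau$ exchanging $s_1,s_2$ is an involution fixing a double cover of $\p^1$ branched over $\Delta$, hence of genus $n-1\geq 2$, and then applies Corollary~\ref{corollary: central involution with a high-genus fixed curve}. You instead note that $\tau$ twists every singular fibre, so $(S,\pi)$ is already $F$-minimal. The proof of Lemma~\ref{lemma: minimal exceptional conic bundle centraliser} already rules out every link in exactly this sub-case.

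Your route is more economical: it shows Reduction~1 is logically redundant and needs no extra input from Blanc. The paper's route gives a direct geometric obstruction, and it produces the exact sequence $1\to T\cap F\to F\to P\to 1$ that it uses afterwards.

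One clean-up: for trivial $\bar\sigma$, your discussion of contracting ``halves'' is beside the point. In that case every singular fibre is untwisted, so the MMP contracts a component in every singular fibre. A balanced choice would then land on $\F_0$, which your Step~3(b) already excludes, so only your fallback to $\F_k$ matters. The paper simply contracts the components not meeting $s_1$, which is $F$-equivariant because $F$ fixes $s_1$. It reaches $\F_n$ directly, so no case $k=0$ arises there.
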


\begin{proof}
In view of Lemma~\ref{lemma: minimal exceptional conic bundle centraliser}, it is enough to reduce to the case where the exceptional conic bundle $(S,\pi)$ is an $F$-Mori fibre space. We make this reduction in three steps. We will successively reduce to the following cases:

\vspace{2mm}

1) The kernel $\ker(\rho|_F)$ is contained in $T$ (this is reduction 1 below). Hence, taking the intersection of the second short exact sequence  \eqref{2nd short exact sequence} with $F$ will yield the short exact sequence
\begin{equation} 1 \longrightarrow T \cap F  \longrightarrow F \longrightarrow  P \longrightarrow  1. \label{3rd short exact sequence}  \end{equation}

2) The map $\sigma|_F \colon F \to \Z / 2 \Z$ is surjective (this is reduction 2 below). Hence, we have
\begin{equation} \sigma|_F = \sgn \circ \rho|_F   \label{equation: sigma|_F = sgn o rho|_F} \end{equation}
where $\sgn \colon P \simeq \Sym_4  \to \Z / 2 \Z$ is the sign character.

\vspace{2mm}

3) The exceptional conic bundle $(S,\pi)$ is an $F$-Mori fibre space (this is reduction 3 below).

\vspace{2mm}

\noindent \uline{Reduction 1}. Let us reduce to the case where $\ker(\rho|_F) \subseteq T$.

If $\ker(\rho|_F)$ were not contained in $T$, there would exist an element
\[ \tau \in \ker(\rho|_F) \setminus T,\]
i.e.\ an element of $F$ which acts trivially on the base and exchanges the two sections $s_1$ and $s_2$. By
\cite[Lemma~4.3.3(2), p.~265]{Blanc2009}, the element $\tau$ is an involution fixing an irreducible curve
\[ C\longrightarrow\p^1\]
which is a double cover branched precisely over $\Delta$. Riemann--Hurwitz therefore gives
\[ g(C)=n-1\geq2. \]
Moreover, $\tau$ restricts to a nontrivial involution on a general fibre, whose two fixed points are precisely the points lying on $C$. Thus $C$ is the unique fixed curve dominating $\p^1$, while every other fixed curve is contained in a fibre and hence rational. Therefore Corollary~\ref{corollary: central involution with a high-genus fixed curve} shows that every element of $\Cent_{\bp}(F)$ is non-loxodromic.

We may henceforth assume that we have $\ker(\rho|_F) \subseteq T$, and hence we have the short exact sequence \eqref{3rd short exact sequence}.

\vspace{2mm}

\noindent \uline{Reduction 2}. Let us reduce to the case where $\sigma|_F \colon F \to \Z / 2 \Z$ is surjective.

Otherwise, every element of $F$ would preserve each of the two exceptional sections $s_1$ and $s_2$. The birational morphism
\[ \eta \colon S \longrightarrow \F_n \]
obtained by contracting in every singular fibre the component which does not meet $s_1$ is $F$-equivariant. Hence, for every $\varphi\in\Cent_{\bp}(F)$, the map
\[ \eta \varphi\eta^{-1} \colon \F_n\dashrightarrow\F_n \]
is $F$-equivariant. Lemma~\ref{lemma: Hirzebruch centraliser} shows that it preserves the standard ruling. Thus $\varphi$ preserves a rational fibration and is not loxodromic by Lemma~\ref{lemma: rational fibration}. We may therefore assume that $\sigma|_F \colon F \to \Z / 2 \Z$ is surjective, i.e. the equality \eqref{equation: sigma|_F = sgn o rho|_F} is satisfied.

\vspace{2mm}

\noindent \uline{Reduction 3}. Let us finally reduce to the case where the exceptional conic bundle $(S,\pi)$ is an $F$-Mori fibre space.

Assume that the $F$-conic bundle $\pi \colon S \to \p^1$ is not minimal (otherwise, there is nothing to do). Then, for each $F$-orbit of an $F$-untwisted fibre $E_1 + E_2$ (see Definition~\ref{definition: G-twisted and untwisted singular fibre}), choose one of the two orbits $g.E_1$, $g \in F$, or $g.E_2$, $g \in F$ (both of these orbits consist of disjoint $(-1)$-curves) and blow down each of its $(-1)$-curves. In this way, we obtain a new $F$-conic bundle $(S_0, \pi_0)$ with an $F$-equivariant morphism of conic bundles
\[ \eta \colon (S, \pi) \to (S_0, \pi_0).\]
The $F$-conic bundle $(S_0, \pi_0)$ is minimal by construction. Indeed, we have actually performed the usual relative $F$-equivariant minimal model program for conic bundles.

Let us now check that the conic bundle $(S_0, \pi_0)$ is still exceptional. By induction, it is enough to show that when we contract a single $F$-orbit
\[ \Omega = \{ g.E_i, \, g \in F \}, \] 
where $i=1$ or $2$, and as above $E_1 + E_2$ is an $F$-untwisted fibre, then the new conic bundle $(S' , \pi')$ is again exceptional. But the cardinality of the orbit $\Omega$ is even (because if we set $p:= \pi (E_i) \in \p^1$, then we have $| \Omega | = | F. p |$ which is even by  Lemma~\ref{lemma: size orbits for the actions of A4, S4, and A5 on P1}). Let $2k$ be the cardinality of this orbit.

We claim that half of the curves in the orbit $\Omega$ meet the section $s_1$, and that the other half meet $s_2$. Let us prove it. Since $\ker(\rho|_F)\subseteq T$ preserves each component of every singular fibre, the action of $F$ on these components factors through $P$. Set
\[ \Stab_P(E_i):=\{g\in P\mid g(E_i)=E_i\}. \]
We have a bijection
\[ P / \Stab_P(E_i )  \to \Omega, \quad g. \Stab_P(E_i ) \mapsto g.E_i.\]
Each element of $\Stab_P(E_i)$ is even, i.e.\ belongs to the kernel of the sign character $\sgn \colon P \simeq \Sym_4  \to \Z / 2 \Z$. Now, left multiplication by any fixed odd element of $P$ exchanges the even and odd cosets of $\Stab (E_i )$. This proves the claim.

It follows that $(S',\pi')$ has $2(n-k)$ singular fibres, and that the new sections $s'_1$ and $s'_2$, which are the images of $s_1$ and $s_2$ under the contraction of the curves in $\Omega$, satisfy $(s'_i)^2=-(n-k)$ (because $(s_i)^2=-n$ and $s_i$ meets exactly $k$ curves which are contracted), for $i=1,2$.

We have therefore shown that if $(S_0,\pi_0)$ has $2n_0$ singular fibres, then it admits two sections $s_{0,1}$ and $s_{0,2}$ satisfying
\[ s_{0,1}^2=s_{0,2}^2=-n_0. \]
It remains to show that $n_0\neq0$. Suppose for contradiction that $n_0=0$. Then $\pi_0$ is a $\p^1$-bundle with two disjoint $0$-sections, and hence $S_0\simeq\F_0$. This gives
\[ F\subseteq\Aut(\F_0,\pi_0)\simeq\PGL_2(\C)\times\PGL_2(\C), \]
contradicting $\psi\bigl(\PGL_2(\C)\times\PGL_2(\C)\bigr)=3$ and $\ell(F)=4$.

Thus $n_0\neq0$, and hence $(S_0,\pi_0)$ is exceptional. As above, we have $n_0\ge3$, since the set of singular fibres projects onto a finite union of $\Sym_4$-orbits on $\p^1$, so Lemma~\ref{lemma: size orbits for the actions of A4, S4, and A5 on P1} gives $2n_0\ge6$.

\vspace{2mm}

We are therefore reduced to the case where the exceptional conic bundle is $F$-minimal, and the result follows from Lemma~\ref{lemma: minimal exceptional conic bundle centraliser}.
\end{proof}

\subsection{The Fermat cubic}

By Case~(8) in the proof of Theorem~\ref{theorem: psi_{finite}(Bir(P2)) =4}, a finite solvable subgroup of derived length $4$ can occur in this family only when $S$ is the Fermat cubic
\[ X^3+Y^3+Z^3+W^3=0. \]

In this case
\[ \Aut(S)=V\rtimes\Sym_4, \qquad V\simeq(\Z/3\Z)^3. \]

 We may identify $V$ with
 \[ V = \{ (a_1,a_2,a_3,a_4) \in  (\FFin_3)^4, \; a_1 + a_2 + a_3 +a_4 = 0\},\]
 where each element $(a_1,a_2,a_3,a_4) \in V$ acts on $S$ as the automorphism
 \[ (X,Y,Z,W) \mapsto ( \omega^{a_1} X,  \omega^{a_2} Y,  \omega^{a_3} Z,  \omega^{a_4} W)\]
 of $S$ ($\omega$ being a primitive third root of unity). The group $\Sym_4$ acts on $S$ (and on $V$) by permutation of the coordinates.
 
We begin with the following elementary observation.
 
\begin{lemma}
\label{lemma: The S4-module V is irreducible}
The $\Sym_4$-module $V$ is irreducible.
\end{lemma}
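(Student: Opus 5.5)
We prove that $V$, the sum-zero hyperplane of the permutation module $(\FFin_3)^4$, is an irreducible $\Sym_4$-module. Let $W\subseteq V$ be a nonzero $\Sym_4$-submodule. We will show that $W=V$ by producing a generating set of $V$ inside $W$.

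\textbf{Step 1: $W$ contains some $e_i-e_j$.} Pick $0\neq w=(a_1,a_2,a_3,a_4)\in W$. Since $w\neq 0$ and $\sum a_i=0$ in $\FFin_3$, the coordinates of $w$ are not all equal. Indeed, if they were all equal to some $c$, then $4c=c=0$, so $w=0$. Hence there are indices $i\neq j$ with $a_i\neq a_j$. Let $\tau=(i\,j)$. Then
\[ w-\tau(w)=(a_i-a_j)(e_i-e_j) \in W, \]
and $a_i-a_j$ is invertible in $\FFin_3$. Therefore $e_i-e_j\in W$.

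\textbf{Step 2: all $e_k-e_l$ lie in $W$.} The group $\Sym_4$ acts transitively on ordered pairs of distinct indices. Applying permutations to $e_i-e_j$ therefore gives every vector $e_k-e_l$ with $k\neq l$.

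\textbf{Step 3: these vectors span $V$.} The vectors $e_1-e_4$, $e_2-e_4$, $e_3-e_4$ are linearly independent. $V$ has dimension $3$, so they span $V$. Hence $W=V$.

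The only subtle point is the characteristic. Here $p=3$ does not divide $4$, so the sum-zero hyperplane does not contain the diagonal line spanned by $(1,1,1,1)$. This is exactly what Step 1 uses to rule out a nonzero vector with all coordinates equal.
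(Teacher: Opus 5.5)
Your proof is correct and follows essentially the same route as the paper: choose a nonzero vector with two distinct coordinates $a_i\neq a_j$, subtract its image under the transposition $(i\,j)$ to obtain $(a_i-a_j)(e_i-e_j)$, and use that the $\Sym_4$-orbit of $e_i-e_j$ spans $V$. The only difference is that you spell out details the paper leaves implicit, namely why a nonzero sum-zero vector cannot have all coordinates equal, that $a_i-a_j$ is invertible, and an explicit basis of $V$.
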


\begin{proof}
A nonzero vector $a=(a_1,a_2,a_3,a_4)\in V$ cannot have all four
coordinates equal. Choose coordinates $i,j$ with $a_i\neq a_j$. Then
\[ a-(i,j)a=(a_i-a_j)(e_i-e_j) \]
is nonzero. The $\Sym_4$-orbit of $e_i-e_j$ spans $V$.
\end{proof}

We now prove the following result, which is analogous to Lemma~\ref{lemma: length-four subgroup of H_{216}}, both in its statement and in its proof.

\begin{lemma}
\label{lemma: length-four subgroup of Fermat cubic}
We have $\length(\Aut(S))=4$. Moreover, if
$F\subseteq\Aut(S)$ is a subgroup of derived length $4$, then
$F=\Aut(S)$.
\end{lemma}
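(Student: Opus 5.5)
I will mirror the proof of Lemma~\ref{lemma: length-four subgroup of H_{216}}, with $L=\SL_2(\FFin_3)$ acting on $(\FFin_3)^2$ replaced by $\Sym_4$ acting on $V\simeq(\FFin_3)^3$.

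\emph{Computing $\length(\Aut(S))$.} The derived series of $\Sym_4$ is
\[ \Sym_4 \; \triangleright \; \Alt_4 \; \triangleright \; K \; \triangleright \; 1, \qquad K\simeq(\Z/2\Z)^2 . \]
As in the $H_{216}$ case, since $V$ is abelian we have $(V\rtimes H)'=[V,H]\rtimes H'$ for every $H\subseteq\Sym_4$. It therefore suffices to show $[V,K]=V$. This gives the derived series
\[ V\rtimes\Sym_4 \; \triangleright \; V\rtimes\Alt_4 \; \triangleright \; V\rtimes K \; \triangleright \; V \; \triangleright \; 1, \]
so $\length(\Aut(S))=4$.

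To check $[V,K]=V$, note that $[V,K]$ is a $\Sym_4$-submodule of $V$, because $K$ is normal in $\Sym_4$. By Lemma~\ref{lemma: The S4-module V is irreducible}, it is enough to show $[V,K]\neq0$. Take $k=(12)(34)$ and $v=e_1-e_2\in V$. Then
\[ k(v)-v=(e_2-e_1)-(e_1-e_2)=-2(e_1-e_2)=e_1-e_2\neq0 \]
in characteristic $3$.

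\emph{Rigidity.} Let $F\subseteq V\rtimes\Sym_4$ with $\length(F)=4$, and let $K_F\subseteq\Sym_4$ be its projection. Since $F\cap V$ is abelian, $\length(K_F)\geq3$. Among subgroups of $\Sym_4$, only $\Sym_4$ itself has derived length $3$: the proper subgroups are $\Alt_4$, $D_8$, $\Sym_3$ and smaller ones, all of derived length at most $2$. Hence $K_F=\Sym_4$.

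Next, $F\cap V$ is normalised by $F$, and $F$ acts on $V$ through its projection $K_F=\Sym_4$. So $F\cap V$ is a $\Sym_4$-submodule of $V$, hence equal to $0$ or $V$ by Lemma~\ref{lemma: The S4-module V is irreducible}. If $F\cap V=0$, then $F\simeq\Sym_4$ has derived length $3$, a contradiction. Therefore $F\cap V=V$, and $F=\Aut(S)$.

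The only slightly delicate step is the nonvanishing of $[V,K]$. Unlike the $H_{216}$ case, no element of $K$ acts on $V$ as $-1$, so the trick used there does not apply directly. The explicit computation with $(12)(34)$ together with irreducibility replaces it.
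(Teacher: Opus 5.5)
Your proof is correct and follows the paper's proof essentially verbatim. You use the same formula $(V\rtimes H)'=[V,H]\rtimes H'$, the same reduction of $[V,K]=V$ to non-vanishing via irreducibility with the test pair $(12)(34)$, $e_1-e_2$, and the same rigidity argument via the surjection onto $\Sym_4$ and irreducibility of $F\cap V$; your explicit check that no proper subgroup of $\Sym_4$ has derived length $3$ just fills in a step the paper leaves implicit.
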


\begin{proof}
1) We begin by proving that the derived length of $\Aut(S)=V\rtimes\Sym_4$ is equal to~$4$. The derived series of $\Sym_4$ is
\[ \Sym_4\triangleright\Alt_4\triangleright (\Z/2\Z)^2\triangleright 1. \]
We claim that the derived series of $V\rtimes\Sym_4$ is
\[ V\rtimes\Sym_4 \; \triangleright \; V\rtimes\Alt_4 \;  \triangleright \; V \rtimes (\Z/2\Z)^2 \; \triangleright \;  V \; \triangleright \; 1. \]
Indeed, since $V$ is abelian, for every subgroup $H \subseteq \Sym_4$ we have
\[ (V\rtimes H)'=[V,H]\rtimes H'. \]

Recall that $[V, H]$ denotes the subgroup of $V$ generated by the elements of the form $h(v) -v$, $h \in H$, $v \in V$ (see the proof of Lemma~\ref{lemma: length-four subgroup of H_{216}}).

It is therefore enough to show that $[V,(\Z/2\Z)^2]=V$. Since
$(\Z/2\Z)^2\triangleleft\Sym_4$, the subgroup $[V,(\Z/2\Z)^2]$ is a
$\Sym_4$-submodule of $V$. By Lemma~\ref{lemma: The S4-module V is irreducible}, it is enough to show that $[V,(\Z /2\Z)^2]\neq 0$. Taking
\[ h=(1,2)(3,4)\in (\Z/2\Z)^2,\qquad v=e_1-e_2 \in V, \]
we have $h(v)=-v$, and hence
\[ h(v)-v=-2v=v\neq0. \]
Thus $\length(\Aut(S))=4$.

2) We finally prove that if $F \subseteq \Aut(S)$ satisfies $\ell(F)=4$, then we have $F  = \Aut(S)$.

Let $P$ be the image of $F$ under the projection
$\Aut(S)=V\rtimes\Sym_4\longrightarrow\Sym_4$. Since $F\cap V$ is abelian, the group $P$ has derived length $3$, and hence
\[ P=\Sym_4. \]
The intersection $F\cap V$ is therefore a $\Sym_4$-submodule of $V$. By Lemma~\ref{lemma: The S4-module V is irreducible}, it is either $0$ or $V$. If $F\cap V=0$, then $F\simeq\Sym_4$, and hence $\length(F)=3$, a contradiction. Thus $F\cap V=V$. Since the image of $F$ in $\Sym_4$ is $\Sym_4$, we obtain $ F=V\rtimes\Sym_4=\Aut(S)$.
\end{proof}

We can now prove the main result of this case

\begin{lemma}
Let $S\subseteq\p^3$ be the Fermat cubic and let
$F\subseteq\Aut(S)$ be a finite solvable group of derived length $4$. Then $\Cent_{\bp}(F)$ contains no loxodromic element.
\end{lemma}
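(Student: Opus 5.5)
By Lemma~\ref{lemma: length-four subgroup of Fermat cubic}, the hypothesis $\ell(F)=4$ forces $F=\Aut(S)=V\rtimes\Sym_4$. So it suffices to show that $\Cent_{\bp}(\Aut(S))$ contains no loxodromic element. The plan is to show that $S$ is $\Aut(S)$-superrigid, exactly as in the primitive case on $\p^2$ and in the case of $\p^1\times\p^1$. Superrigidity means every $F$-equivariant birational self-map of $S$ is an automorphism. Hence $\Cent_{\Bir(S)}(F)\subseteq\Aut(S)$, which is finite, so it contains no loxodromic element.

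The first step is to check that $S$ is a minimal del Pezzo $F$-surface, i.e.\ $\rk\Pic(S)^F=1$. For this I will use the subgroup $V\simeq(\Z/3\Z)^3$ (or even the full $\Aut(S)$). The classical fact is that the Fermat cubic with its full automorphism group is minimal, because $\Pic(S)^{\Aut(S)}=\Z K_S$. Concretely, the $27$ lines are permuted transitively by $V\rtimes\Sym_4$, and the invariant part of $\Pic(S)\otimes\Q$ under the Weyl-group image is one-dimensional. I would cite Blanc's work or Dolgachev--Iskovskikh for this.

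The second step is the orbit bound. By Lemma~\ref{lemma: a minimal del Pezzo G-surface of degree d with no orbit of size <d is G-superrigid}, with $K_S^2=3$, it suffices to show that $F$ has no orbit of size $1$ or $2$. Since $F$ contains $\Sym_4$, whose commutator subgroup $\Alt_4$ has no index-$2$ subgroup, an orbit of size $\le 2$ contains a point fixed by $\Alt_4\ltimes V$. Indeed, the stabilizer has index $\le2$, hence contains $D^1F\supseteq V\rtimes\Alt_4$. A point fixed by all of $V$ is a coordinate-type point: $V$ contains $\mathrm{diag}(\omega,\omega^{-1},1,1)$ and its permutations, and the common fixed points of these diagonal elements in $\p^3$ are the coordinate points $e_i$. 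But $e_i\notin S$, since $X^3+\dots+W^3$ does not vanish there. So every $F$-orbit on $S$ has size at least $3$, and Lemma~\ref{lemma: a minimal del Pezzo G-surface of degree d with no orbit of size <d is G-superrigid} gives superrigidity. The bound is only needed as $|\Omega|<3$ fails, so even an orbit of size exactly $3$ is allowed.

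Alternatively, as the introduction indicates, one could simply invoke Blanc's superrigidity result for the Fermat cubic with this group directly. I would present the orbit argument and cite Blanc for the minimality. The main obstacle is the minimality statement $\rk\Pic(S)^F=1$. Computing it directly requires knowing the action on the $27$ lines: $V$ together with $\Sym_4$ permutes the lines transitively, and one shows that the only invariant class in $K_S^\perp\otimes\Q$ is $0$. The invariant subspace of $K_S^\perp$ is contained in the span of the orbit-sum of the lines, which is proportional to $-K_S$, so $\Pic(S)^F\otimes\Q=\Q K_S$. I expect to rely on this averaging argument or on the literature citation.
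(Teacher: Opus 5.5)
Your proof is correct and follows the paper's route. The lemma on length-four subgroups gives $F=\Aut(S)$, and $F$-superrigidity of $S$ then puts $\Cent_{\bp}(F)$ inside the finite group $\Aut(S)$; the paper cites Blanc for this superrigidity and rederives it in its subsequent remark from the orbit bound with $K_S^2=3$, as you do. Your stabilizer argument and your averaging argument for $\rk\Pic(S)^F=1$ are both valid. In the first, an orbit of size at most $2$ would be fixed by $D^1F\supseteq V$, whose only fixed points in $\p^3$ are the coordinate points, none of which lie on $S$. The second makes explicit the minimality the paper leaves to Blanc, and needs only the easy transitivity check: $V$ moves the nine lines $X+\omega^aY=Z+\omega^bW=0$ transitively and $\Sym_4$ permutes the three pairings, so the orbit sum is the sum of all $27$ lines, namely $-9K_S$.
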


\begin{proof}
By Lemma~\ref{lemma: length-four subgroup of Fermat cubic},
we have $F=\Aut(S)$. By \cite[Theorem 1]{Blanc2009}, $S$ is a minimal $F$-del Pezzo surface which is superrigid. Hence, we have $\Cent_{\bp}(F) \subseteq \Aut (S)$, and in particular $\Cent_{\bp}(F)$ contains no loxodromic element.
\end{proof}

\begin{remark}
Let $F = \Aut (S)$. The $F$-superrigidity of $S$ can also be deduced directly from Lemma~\ref{lemma: a minimal del Pezzo G-surface of degree d with no orbit of size <d is G-superrigid} by verifying its orbit condition. Let $p=[X:Y:Z:W] $ be a point of the cubic. At least two of its coordinates are nonzero. Up to permutation of the coordinates, we may assume that $X,Y\neq 0$. The three points
\[ [\omega^aX:Y:Z:W],\qquad a \in \FFin_3, \]
are distinct. Hence every $F$-orbit in $S$ has cardinality at least $3$. Since $K_S^2=3$, the result follows.
\end{remark}

\subsection{Degree-one del Pezzo surfaces}

Let $S$ be a del Pezzo surface of degree $1$, and let $\beta\in\Aut(S)$ be the Bertini involution. Recall from Case~(10) in the proof of Theorem~\ref{theorem: psi_{finite}(Bir(P2)) =4} that $\beta$ is central in $\Aut(S)$, that every automorphism of $S$ fixes the isolated fixed point $p$ of $\beta$, and that the tangent representation
\[ \rho_p\colon\Aut(S)\longrightarrow\GL(T_pS)\simeq\GL_2(\C) \]
is faithful.

\begin{lemma} \label{lemma: A solvable subgroup of derived length 4 of the automorphism group of a del Pezzo surface of degree 1 contains the Bertini involution}
Let $S$ be a del Pezzo surface of degree $1$ and let $F$ be a solvable subgroup of derived length $4$ of $\Aut (S)$. Then $F$ contains the Bertini involution $\beta$.
\end{lemma}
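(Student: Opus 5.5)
The plan is to transport the problem into $\GL_2(\C)$ via the faithful tangent representation $\rho_p\colon\Aut(S)\to\GL(T_pS)$ at the isolated fixed point $p$ of $\beta$. There I will apply Lemma~\ref{lemma: A solvable subgroup of derived length 4 of GL_2(C) contains the subgroup {I,-I}}, and then identify the element $-I$ with $\beta$.

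\textbf{Step 1.} Since $\rho_p$ is faithful, $\rho_p(F)\simeq F$ is a solvable subgroup of $\GL_2(\C)$ of derived length $4$. Lemma~\ref{lemma: A solvable subgroup of derived length 4 of GL_2(C) contains the subgroup {I,-I}} then gives $-I\in\rho_p(F)$. Let $g\in F$ be the element with $\rho_p(g)=-I$.

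\textbf{Step 2.} I will show $\rho_p(\beta)=-I$. The point $p$ is an isolated fixed point of the involution $\beta$. Linearise $\beta$ at $p$: a finite-order automorphism is locally analytically linearisable at a fixed point, for instance by Cartan's averaging argument. The fixed locus near $p$ is then the eigenspace of $d\beta_p$ for the eigenvalue $1$. This eigenspace must be zero because $p$ is isolated. Since $d\beta_p$ has order dividing $2$ and has no eigenvalue $1$, we get $d\beta_p=-I$.

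\textbf{Step 3.} Faithfulness of $\rho_p$ together with $\rho_p(g)=-I=\rho_p(\beta)$ gives $g=\beta$, so $\beta\in F$.

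The only point that needs care is Step 2, where one must justify that an isolated fixed point of an involution has tangent action $-I$. I will handle it by the linearisation argument above. Alternatively, one can use the concrete description of $S$ as a sextic in $\p(1,1,2,3)$, with $\beta\colon w\mapsto -w$ and $p=[0:0:1:1]$ (up to scaling), and compute $d\beta_p$ in the local coordinates $x,y$. The remaining steps are immediate from results already in the paper.
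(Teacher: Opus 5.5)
Your proof is correct. It shares the paper's skeleton: the faithful tangent representation $\rho_p$, then the $\GL_2(\C)$ lemma to get $-I\in\rho_p(F)$, then injectivity of $\rho_p$. The difference lies entirely in how you establish $\rho_p(\beta)=-I$.

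The paper argues group-theoretically. Faithfulness makes $\rho_p(\beta)$ a nontrivial involution, so if it were not $-I$ it would be conjugate to $\smat{1}{0}{0}{-1}$. Since $\beta$ is central in $\Aut(S)$, $\rho_p(F)$ would then lie in the centraliser of that matrix, the diagonal torus. Hence $\rho_p(F)$ would be abelian, contradicting $\ell(F)=4$.

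You argue geometrically instead. You linearise $\beta$ at its isolated fixed point (Cartan averaging), observe that the $1$-eigenspace of $d\beta_p$ must vanish, and conclude $d\beta_p=-I$. The explicit check in $\p(1,1,2,3)$, where $\beta$ becomes $(x,y)\mapsto(-x,-y)$ in local coordinates at $[0:0:1:1]$, also works.

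What each route buys:
\begin{itemize}
\item Your route proves the stronger, $F$-independent fact that $\beta$ acts as $-I$ on $T_pS$, so $\beta$ is identified with the element $-I$ once and for all, and centrality of $\beta$ is not needed at this step. The cost is an appeal to local analytic linearisation or to the weighted-projective model.
\item The paper's route stays purely algebraic and avoids any local analysis at $p$. It does so by reusing the centrality of $\beta$ and the hypothesis $\ell(F)=4$ a second time.
\end{itemize}
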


\begin{proof}
Let $\rho_p \colon \Aut (S) \to \GL (T_pS)$ be the tangent representation.

1) Let us prove that we have $\rho_p ( \beta ) = -I$.
 
Otherwise, $\rho_p ( \beta )$ being an involution of $\GL (T_pS) \simeq \GL_2 (\C)$ it would be conjugate to the matrix $\smat{1}{\hspace{2mm}0}{0}{-1}$. But $\beta$ is a central element of $\Aut (S)$. In particular, it commutes with each element of $F$, and hence $\rho_p (F)$ is contained in
\[  \Cent_{\GL_2 (\C) }(\smat{1}{\hspace{2mm}0}{0}{-1} ) = \{ \smat{a}{0}{0}{b}, \; a,b \in \C^* \}.\]
A contradiction, because then $\rho_p (F)$ would be commutative and we have assumed that its derived length is $4$.

2) By Lemma~\ref{lemma: A solvable subgroup of derived length 4 of GL_2(C) contains the subgroup {I,-I}}, $\rho_p (F)$ contains the matrix $-I$. Since $\rho_p$ is injective, $F$ contains $\beta$ by the first point.
\end{proof}

\begin{lemma}
Let $S$ be a del Pezzo surface of degree $1$ and let $F \subseteq \Aut (S) $ be a finite solvable group of derived length $4$. Then $ \Cent_{\bp}(F)$ contains no loxodromic element.
\end{lemma}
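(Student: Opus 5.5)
The plan is to use the third mechanism announced in the introduction: an involution in $F$ whose fixed locus contains a curve of genus at least two. By Lemma~\ref{lemma: A solvable subgroup of derived length 4 of the automorphism group of a del Pezzo surface of degree 1 contains the Bertini involution}, the Bertini involution $\beta$ belongs to $F$. Every $\varphi\in\Cent_{\bp}(F)$ therefore commutes with $\beta$, after conjugating so that $F$ acts biregularly on $S$. It then suffices to check the hypotheses of Corollary~\ref{corollary: central involution with a high-genus fixed curve} for $\sigma=\beta$ on the smooth projective rational surface $S$.

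To do this, I will describe $\Fix(\beta)$ concretely. The bi-anticanonical map $|-2K_S|$ realises $S$ as a double cover of the quadric cone $Q\subseteq\p^3$, and $\beta$ is its deck involution. The cover is branched over the vertex and over a curve $R$ cut out on $Q$ by a cubic surface not passing through the vertex. Equivalently, $S$ is the hypersurface $w^2=z^3+f_4(x,y)z+f_6(x,y)$ in $\p(1,1,2,3)$ with $\beta\colon w\mapsto -w$. In these terms,
\[ \Fix(\beta)=\{p\}\cup C, \qquad C=\{w=0\}, \]
where $p$ is the isolated fixed point used in Case~(10).

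I will then compute the genus of $C$. The curve $C\simeq R$ is the smooth complete intersection of the cone $Q$ with a cubic, and it is a smooth member of $|-3K_S|$. Adjunction gives
\[ 2g(C)-2=C\cdot(C+K_S)=(-3K_S)\cdot(-2K_S)=6K_S^2=6, \]
so $g(C)=4\geq 2$. Smoothness and irreducibility of $C$ follow from the smoothness of $S$: a singular point of the branch curve away from the vertex would produce a singular point of the double cover. Irreducibility also follows because a smooth curve in the ample class $-3K_S$ is connected. Hence $C$ is the unique curve component of positive genus in $\Fix(\beta)$.

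Corollary~\ref{corollary: central involution with a high-genus fixed curve} then shows that no element of $\Cent_{\bp}(F)$ is loxodromic. The main point requiring care is the description of $\Fix(\beta)$: showing that it consists exactly of $p$ and the smooth irreducible genus-$4$ curve $C$. I would cite this classical fact from \cite{BayleBeauville2000} and use the weighted-hypersurface model only as a check. The remaining steps are formal.
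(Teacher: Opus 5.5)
Your proposal is correct and is essentially the paper's proof: the Bertini involution $\beta$ lies in $F$, so every element of the centraliser commutes with $\beta$, and the corollary on central involutions with a high-genus fixed curve applies to the genus-$4$ component of $\Fix(\beta)$. The only difference is that the paper just cites \cite{BayleBeauville2000} for the genus-$4$ fixed curve, whereas you also check it, correctly, via the model $w^2=z^3+f_4(x,y)z+f_6(x,y)$ in $\p(1,1,2,3)$ and the adjunction computation $2g-2=(-3K_S)\cdot(-2K_S)=6$.
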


\begin{proof}
By Lemma~\ref{lemma: A solvable subgroup of derived length 4 of the automorphism group of a del Pezzo surface of degree 1 contains the Bertini involution}, the group $F$ contains the Bertini involution $\beta$. Hence every element of $ \Cent_{\bp}(F)$ commutes with $\beta$. It is well-known that the unique positive-genus component of $ \Fix (\beta) $ is a smooth curve of genus $4$ (see \cite[p.~12]{BayleBeauville2000}). Therefore Corollary~\ref{corollary: central involution with a high-genus fixed curve} applies and excludes loxodromic elements.
\end{proof}

\subsection{\((\Z / 2 \Z)^2\)-conic bundles}
Finally, we consider family~(11) of Theorem~\ref{theorem: the eleven families of maximal algebraic subgroups of Bir(P2)}. We briefly recall the features of $(\Z/2\Z)^2$-conic bundles that will be used below. Let $\pi\colon S\to\p^1$ be such a conic bundle. The subgroup of automorphisms acting trivially on the base is a Klein four group.

Set
\[ V :=\Aut(S/ \p^1)\simeq(\Z/2\Z)^2. \]
For each nontrivial involution $\sigma\in V$, the fixed locus of
$\sigma$ has a unique irreducible component $C_\sigma$ dominating the base, and
\[ \pi|_{C_\sigma}\colon C_\sigma \to \p^1 \]
is a double cover branched over a nonempty set
$A_\sigma \subset \p^1$ of even cardinality; see Definition~\ref{definition: (Z/2Z)^2-conic bundle}. Moreover, conjugation by an element of $\Aut(S,\pi)$ permutes the three nontrivial elements of $V$, together with their corresponding fixed curves and branch sets.

\begin{lemma}[Central fibrewise involution]
\label{lemma:central fibrewise involution}
Let $\pi\colon S\to\p^1$ be a $(\Z/2\Z)^2$-conic bundle, and let
\[ 1 \longrightarrow V \longrightarrow\Aut(S,\pi) \xrightarrow{\rho}H_V\longrightarrow1, \qquad V\simeq(\Z/2\Z)^2. \]
be the exact sequence induced by the action on the base. Let $ F \subseteq \Aut(S,\pi)$ be a finite solvable subgroup of derived length $4$. Then
\[ \rho(F) \simeq \Sym_4 \qquad \text{and} \qquad V \cap \mathrm Z(F)\neq1. \]
In particular, $F$ contains a nontrivial fibrewise involution which is central in $F$.
\end{lemma}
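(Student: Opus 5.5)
Restricting the exact sequence to $F$ gives
\[ 1 \longrightarrow V\cap F \longrightarrow F \longrightarrow \rho(F) \longrightarrow 1. \]
Since $V\cap F$ is abelian, we get $\ell(\rho(F))\geq 3$. Then Lemma~\ref{lemma: psi(PGL_2(C))=3} gives $\rho(F)\simeq\Sym_4$. It also forces $V\cap F\neq 1$: otherwise $F\simeq\Sym_4$ would have derived length $3$. Thus $W:=V\cap F$ is a nontrivial normal subgroup of $F$, of order $2$ or $4$.

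It remains to find a nontrivial element of $W$ centralised by $F$. Conjugation gives an action of $F$ on the elementary abelian $2$-group $W$. Since $W$ is abelian, this action factors through $F/W\simeq \rho(F)\simeq\Sym_4$, giving a homomorphism $\Sym_4\to\Aut(W)$. There are two cases.

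If $|W|=2$, then $\Aut(W)$ is trivial, so $W$ is central and we are done.

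If $W=V$, then $\Aut(V)\simeq\GL_2(\FFin_2)\simeq\Sym_3$, and we get a homomorphism $\theta\colon\Sym_4\to\Sym_3$. Its kernel is a normal subgroup of $\Sym_4$, namely $1$, $(\Z/2\Z)^2$, $\Alt_4$ or $\Sym_4$. It cannot be trivial, since $|\Sym_4|>|\Sym_3|$. If the image of $\theta$ fixes one of the three nontrivial involutions of $V$, that involution is central in $F$ and we are done. The image of $\theta$ fixes no such involution exactly when it acts transitively on the three involutions, i.e.\ contains a $3$-cycle. Since the kernel is nontrivial, this happens only if the kernel is the Klein group and $\theta$ is the surjection $\Sym_4\to\Sym_3$. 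This is the case to exclude.

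\emph{Main obstacle: excluding the surjective case.} Here the action of $F$ permutes the three involutions $\sigma$, and with them the curves $C_\sigma$ and the branch sets $A_\sigma$, transitively. First I would check that the three branch sets are pairwise disjoint and that their union is the set $\Delta$ of points under singular fibres. This should follow from the local structure of $V$ on a singular fibre, where exactly one involution fixes the node. Next, each $A_\sigma$ is invariant under $\rho(\Stab_F(\sigma))$, a subgroup of index $3$ in $\Sym_4$, so isomorphic to $D_8$. Then $\Delta$, being $\Sym_4$-invariant, is a union of $\Sym_4$-orbits, of sizes $6$, $8$, $12$, $24$. Each orbit must split into three $D_8$-invariant pieces, one inside each $A_\sigma$, permuted transitively; so every orbit size must be divisible by $3$, which rules out orbits of size $8$. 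I would then look for a contradiction by combining three facts: each $A_\sigma$ has nonempty even cardinality; the components of singular fibres are twisted; and the group $\Aut(S,\pi)$ is the specific one from Blanc's description. I would first try to show that some $D_8$-stable splitting of a $\Sym_4$-orbit forces an element of $F$ acting trivially on the base outside $V$. If that fails, I would instead show directly that in this configuration $\Aut(S/\p^1)$ is larger than $V$, against the definition. This last exclusion is the step I expect to require the most care.
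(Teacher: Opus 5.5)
Your reduction is correct and agrees with the paper up to the last case. You get $\rho(F)\simeq\Sym_4$ and $W=F\cap V\neq1$, you settle $|W|=2$, and for $W=V$ you correctly reduce to the case where $\theta\colon\Sym_4\to\Aut(V)\simeq\Sym_3$ is surjective. That case, however, is not proved: you only list strategies, so there is a genuine gap exactly where you expect one.

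The proposed geometric route is also aimed at the wrong target. In the surjective case your plan never uses $\ell(F)=4$ again. Yet this hypothesis is precisely what excludes the case, while the configuration itself is not contradictory. For instance, the six vertices of the octahedral $\Sym_4$-orbit on $\p^1$ split into three antipodal pairs, each stabilised by a $D_8$ and permuted transitively, so your divisibility count gives nothing. Moreover, conic bundles $a_1X_1^2+a_2X_2^2+a_3X_3^2=0$ (with $a_i$ binary forms) whose zero sets $Z(a_i)$ are permuted by an octahedral $\Sym_4$ through $\Sym_4\to\Sym_3$ appear to carry exactly such actions, necessarily with $\ell(F)\leq3$ (see below). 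So you should not expect an extra fibrewise automorphism or a clash with the definition. Your first geometric claim is also inaccurate. Every element of $V$ fixes the node of a singular fibre, and an involution exchanging the two components fixes only the node on that fibre, so its fixed curve is branched there. In the model above, with $\sigma_i\colon X_i\mapsto -X_i$, one finds $A_{\sigma_i}=\Delta\setminus Z(a_i)$, so the branch sets overlap rather than partition $\Delta$.

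The missing idea is a short group-theoretic computation, which is how the paper concludes. Let $K=\ker\theta$ be the normal Klein four subgroup of $\Sym_4$, so $K=D^2\Sym_4$, and put $B:=(\rho|_F)^{-1}(K)$. Since $K$ acts trivially on $V$ by conjugation, $V$ is central in $B$ and $B/V\simeq(\Z/2\Z)^2$. Writing $B=\langle V,x,y\rangle$, one gets $B'=\langle[x,y]\rangle\subseteq V$, of order at most $2$. As $\rho(D^2F)=D^2\rho(F)=K$, we have $D^2F\subseteq B$, hence $D^3F\subseteq B'\subseteq V$. Since $\ell(F)=4$, $D^3F\neq1$, so $D^3F$ has order $2$. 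Being characteristic in $F$, its generator is a nontrivial element of $V$ centralised by $F$. This in fact contradicts the transitivity of $\theta$, i.e.\ the surjective case always has $\ell(F)\leq3$; either way $V\cap\mathrm{Z}(F)\neq1$.
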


\begin{proof}
Put
\[ W:=F\cap V=\ker(\rho|_F), \qquad P:=\rho(F). \]
We have a short exact sequence
\[ 1 \longrightarrow W \longrightarrow F \longrightarrow P
\longrightarrow 1. \]
Since $W$ is abelian and $P \subseteq \PGL_2(\C)$, we obtain
\[ 4=\ell(F) \leq \ell (W)+\ell (P) \leq1+3=4. \]
Thus $W\neq1$ and $\ell(P)=3$, and hence we have $P\simeq\Sym_4$ by 
Lemma~\ref{lemma: psi(PGL_2(C))=3}.

If $|W|=2$, let $\sigma$ be its unique nontrivial element. Since
$W$ is normal in $F$, conjugation by any element of $F$ preserves $W$, and hence fixes $\sigma$. Thus $\sigma\in V\cap\mathrm Z(F)$.

We may therefore assume that $W=V$. In particular $V \subseteq F$ and
\[ F / V \simeq P \simeq \Sym_4. \]
Since $V$ is abelian, conjugation by elements of $F$ on $V$ factors through $F/ V$ and we obtain a homomorphism $\alpha \colon P \simeq F/ V \longrightarrow \Aut( V) \simeq \Sym_3$ which records how conjugation by elements of $F$ permutes the three nontrivial elements of $V$.

Suppose first that $\alpha$ is not surjective. Its image is then
either trivial or of order $2$: it cannot have order $3$, since
$\Sym_4^{\mathrm{ab}}\simeq\Z/2\Z$. In either case, the image of
$\alpha$ fixes one of the three nontrivial elements of $V$. Hence there exists
\[ 1 \neq \sigma \in V \]
which is fixed by conjugation by every element of $F$, and therefore $\sigma \in V \cap \mathrm Z(F)$.

It remains to consider the case where $\alpha$ is surjective. Put
\[ K:=\ker(\alpha). \]
Then $K$ is the normal Klein four subgroup of $P \simeq \Sym_4$, so
\[ K= D^2P \simeq(\Z/2\Z)^2. \]
Let
\[ B := (\rho|_F)^{-1}(K). \]
Since $K$ acts trivially on $V$ by conjugation (because $K =\ker(\alpha)$), we have
\[ V \subseteq \mathrm Z(B), \qquad B/ V \simeq K\simeq(\Z/2\Z)^2. \]
Choose $x,y\in B$ whose images generate $B/V$. Then $B$ is generated by $V, x,y$, and $V$ is central in $B$. Consequently,
\[ B' = \langle[x,y] \rangle \subseteq V, \]
so $|B'| \leq 2$.

Since $\rho|_F \colon F \to P$ is surjective,
\[ \rho(D^2F)=D^2P=K, \]
and hence $D^2F \subseteq B$. It follows that
\[ 1 \neq D^3F = (D^2F)' \subseteq B', \]
where the first inequality follows from $\ell(F)=4$. Thus $D^3F$
has order $2$. Since $D^3F$ is characteristic in $F$, its unique
nontrivial element is central in $F$. Moreover,
\[ D^3F \subseteq B' \subseteq V. \]
Hence $V \cap \mathrm Z(F) \neq1$.
\end{proof}

\begin{lemma}
\label{lemma:  the centraliser of a (Z/2Z)^2-conic bundle contains no loxodromic element}
Let $\pi\colon S\to\p^1$ be a $(\Z/2\Z)^2$-conic bundle such that
$S$ is not a del Pezzo surface, and let $F\subseteq\Aut(S,\pi)$ be a finite solvable subgroup of derived length $4$. Then $\Cent_{\bp}(F)$ contains no loxodromic element.
\end{lemma}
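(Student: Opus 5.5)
By Lemma~\ref{lemma:central fibrewise involution}, $F$ contains a nontrivial fibrewise involution $\sigma\in V$ that is central in $F$, and $\rho(F)\simeq\Sym_4$. Every element of $\Cent_{\bp}(F)$ commutes with $\sigma$. The plan is to apply Corollary~\ref{corollary: central involution with a high-genus fixed curve} to $\sigma$. This requires showing that $C_\sigma$ is the unique positive-genus curve in $\Fix(\sigma)$ and that $g(C_\sigma)\geq 2$.

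\textbf{Uniqueness.} By the recalled description, $C_\sigma$ is the only component of $\Fix(\sigma)$ dominating $\p^1$. Every other fixed curve lies in a fibre, hence is a smooth rational curve or a component of a singular fibre, so it is rational. Thus only the genus bound needs work.

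\textbf{Genus bound.} Since $C_\sigma\to\p^1$ is a double cover branched over $A_\sigma$, Riemann--Hurwitz gives
\[ g(C_\sigma)=\tfrac{|A_\sigma|}{2}-1. \]
So it suffices to show $|A_\sigma|\geq 6$. As $\sigma$ is central in $F$, conjugation by $F$ fixes $\sigma$, and hence fixes $C_\sigma$ and $A_\sigma$. Therefore $A_\sigma$ is a nonempty $\rho(F)$-invariant subset of $\p^1$, with $\rho(F)\simeq\Sym_4$. By Lemma~\ref{lemma: size orbits for the actions of A4, S4, and A5 on P1}, every $\Sym_4$-orbit on $\p^1$ has at least $6$ points. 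Hence $|A_\sigma|\geq 6$ and $g(C_\sigma)\geq 2$. Corollary~\ref{corollary: central involution with a high-genus fixed curve} then shows that no element of $\Cent_{\bp}(F)$ is loxodromic.

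\textbf{Main point to verify.} The delicate step is the structural fact that $\Fix(\sigma)$ has exactly one horizontal component and no other curve of positive genus. This could fail if $\sigma$ fixed a whole fibre, or if $C_\sigma$ were reducible. The fixed locus of $\sigma$ on a general smooth fibre consists of two points. A vertical fixed curve is a fibre or a fibre component, so it is rational. Irreducibility of $C_\sigma$ is part of Definition~\ref{definition: (Z/2Z)^2-conic bundle}. The hypothesis that $S$ is not del Pezzo is not needed here, beyond placing us in family~(11). If any doubt remains about a vertical fixed curve, rationality of every fibre component settles it.
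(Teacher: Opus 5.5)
Your proposal is correct and follows essentially the same route as the paper. You take a central fibrewise involution $\sigma$ from Lemma~\ref{lemma:central fibrewise involution}, use centrality to make $A_\sigma$ invariant under $\rho(F)\simeq\Sym_4$ (so $|A_\sigma|\geq 6$ and $g(C_\sigma)\geq 2$ by Riemann--Hurwitz), note that every other component of $\Fix(\sigma)$ is vertical hence rational, and conclude with Corollary~\ref{corollary: central involution with a high-genus fixed curve}. The only cosmetic difference is that the paper gets $h(C_\sigma)=C_\sigma$ for $h\in F$ from $C_\sigma$ being the unique horizontal component of $\Fix(\sigma)$, whereas you use that conjugation by $h$ sends $C_\sigma$ to $C_{h\sigma h^{-1}}=C_\sigma$; both are fine.
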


\begin{proof}
Let $P:=\rho(F)\subseteq\PGL_2(\C)$.  By Lemma~\ref{lemma:central fibrewise involution}, we have $P\simeq \Sym_4$ and there exists $ 1\neq \sigma \in V \cap \mathrm Z(F)$. Recall that the involution $\sigma$ fixes pointwise an irreducible curve $C_\sigma$ such that
\[ \pi|_{C_\sigma}\colon C_\sigma\longrightarrow\p^1 \]
is a double cover branched over a nonempty set $A_\sigma \subseteq \p^1$ of even cardinality; see Definition~\ref{definition: (Z/2Z)^2-conic bundle}. Since a nontrivial involution of a smooth fibre has exactly two fixed points, $C_\sigma$ is the unique irreducible component of $\Fix(\sigma)$ dominating the base.

Since $\sigma$ is central in $F$, every element of $F$ preserves
$C_\sigma$. Indeed, if $g\in F$, then
\[ g(C_\sigma)\subseteq\Fix(\sigma), \]
and $g(C_\sigma)$ dominates the base. Since $C_\sigma$ is the unique irreducible component of $\Fix(\sigma)$ dominating the base, we have
\[ g(C_\sigma)=C_\sigma. \]
It follows that $P=\rho(F)$ preserves the branch set $A_\sigma$.

Since $P\simeq\Sym_4$ and $A_\sigma$ is nonempty,
Lemma~\ref{lemma: size orbits for the actions of A4, S4, and A5 on P1} gives $ |A_\sigma|\geq 6$. By the Riemann--Hurwitz formula,
\[ g(C_\sigma) =\frac{|A_\sigma|}{2}-1 \geq2. \]
Moreover, every other irreducible curve contained in $\Fix(\sigma)$ is contained in a fibre of $\pi$, and is therefore rational. Thus
$C_\sigma$ is the unique positive-genus component of $\Fix(\sigma)$. Finally, every element of $\Cent_{\bp}(F)$ commutes with $\sigma$. Corollary~\ref{corollary: central involution with a high-genus fixed curve} therefore shows that no such element is loxodromic.
\end{proof}

\section{Derived-length bounds by dynamical type}
\label{sec:dynamical-bounds}

We now establish the remaining estimates needed for the proof of
Theorem~\ref{theorem: main}. We treat successively solvable subgroups containing a loxodromic element, subgroups preserving a rational fibration, and automorphism groups of Halphen surfaces.

\subsection{Solvable groups with loxodromic elements} \label{subsection: Solvable groups with loxodromic elements}

We will use the following theorem of Cantat (see \cite[Theorem A.1]{DelzantPy2012}).

\begin{theorem}[Cantat]
\label{theorem: Cantat-Delzant-Py}
Let $G\subseteq\bp$ be a group containing a loxodromic element and an infinite bounded normal subgroup. Then $G$ is conjugate to a subgroup of
\[ \Aut((\C^*)^2)=(\C^*)^2\rtimes\GL_2(\Z). \]
\end{theorem}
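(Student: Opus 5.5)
The plan is to replace $N$ by a connected algebraic group normalized by $G$, show that this group must be a two-dimensional torus, and then use that the normalizer of the standard torus in $\bp$ is exactly the monomial group.

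Let $N \triangleleft G$ be infinite and bounded. By \cite[Corollary~2.18]{BlancFurter2013}, $N$ lies in an algebraic subgroup, so its Zariski closure $\overline N$ in $\bp$ is an algebraic group. Since $N$ is infinite, $\overline N$ has positive dimension. Conjugation by an element of $G$ is a homeomorphism for the Zariski topology of $\bp$ and preserves $N$, so $G$ normalizes $\overline N$. The identity component $H:=\overline N^{\,0}$ is characteristic in $\overline N$, so $G$ also normalizes $H$. Thus $H$ is a connected algebraic subgroup of positive dimension, normalized by $G$, and in particular by a loxodromic element $f\in G$. The proof then consists in showing that $H$ is conjugate to the standard torus $(\C^*)^2$, and that the normalizer of that torus in $\bp$ is $\Aut((\C^*)^2)=(\C^*)^2\rtimes\GL_2(\Z)$.

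\textbf{Dimension one.} Suppose $H$ contains a one-dimensional closed connected subgroup $H_1$ that is characteristic in $H$ (this includes the case $\dim H=1$). Then $f$ normalizes $H_1$, hence maps $H_1$-orbits to $H_1$-orbits. The rational quotient of the surface by $H_1$ is a rational map to a curve, necessarily $\p^1$, whose general fibres are closures of $H_1$-orbits. This gives an $f$-invariant rational fibration, contradicting Lemma~\ref{lemma: rational fibration}.

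\textbf{Dimension two.} This disposes of any $H$ with a one-dimensional characteristic subgroup. Among the non-abelian such $H$ (the affine group of the line, and solvable groups with nontrivial unipotent radical or derived subgroup), the derived subgroup or the unipotent radical is such a subgroup. In dimension two the surviving cases are $(\C^*)^2$, $\C^2$ and $\C\times\C^*$. For $\C\times\C^*$ the unipotent part $\C$ is characteristic and one-dimensional, so it is excluded.

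\textbf{The case $\C^2$.} A faithful action of $\C^2$ on a rational surface has an open orbit, so up to conjugacy it is the action by translations of $\A^2$. Its normalizer preserves the complement of the open orbit, so it acts on $\A^2$ and normalizes the translations; hence it is contained in $\Aff_2(\C)$. That group is bounded, so it contains no loxodromic element, and this case is excluded.

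\textbf{Dimension at least three.} Here I would use Blanc's classification (Theorem~\ref{theorem: the eleven families of maximal algebraic subgroups of Bir(P2)}) of the possible connected groups. Either there is a characteristic subgroup of smaller dimension, and we recurse to one of the previous cases, or $H$ is semisimple-like, such as $\PGL_3$, $\PGL_2\times\PGL_2$, or $\PGL_2$ acting on $\F_n$. In the latter case I would argue that the normalizer of $H$ in $\bp$ is itself algebraic, for instance because $H$ has a unique minimal model, or because its action preserves a fibration. Either way $f$ cannot be loxodromic.

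\textbf{The torus case.} We are left with $H\simeq(\C^*)^2$, and after conjugation it is the standard torus acting on $(\C^*)^2\subseteq\p^2$. Any element normalizing $H$ maps the open $H$-orbit $(\C^*)^2$ to itself, birationally and $H$-equivariantly up to an automorphism of $H$. Writing this out, it is $x\mapsto c\cdot x^A$ with $A\in\GL_2(\Z)$, that is, an element of $\Aut((\C^*)^2)$. Since $G$ normalizes $H$, this gives $G\subseteq(\C^*)^2\rtimes\GL_2(\Z)$.

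I expect the main obstacle to be the case analysis for $\dim H\ge 3$ and for non-reductive groups. That is where one must check carefully that each candidate either has a characteristic one-dimensional subgroup, giving an invariant fibration, or has an algebraic normalizer. I would try to shortcut this by first passing to a maximal torus $T$ of the radical of $H$, or to the unipotent radical, and repeating the fibration argument on these characteristic pieces.
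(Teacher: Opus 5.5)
The paper does not prove this statement; it quotes it from Cantat's appendix to Delzant--Py, so there is no in-paper argument to compare yours with. Your overall strategy is reasonable: take the Zariski closure of $N$, pass to its identity component $H$ normalized by $G$, rule out orbit fibrations using the loxodromic element, and identify the normalizer of the standard torus with the monomial group. The first step and the torus case are fine. However, two steps fail as written.

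First, it is false that a faithful $\C^2$-action on a rational surface has an open orbit. The group $\{(x,y)\mapsto(x,y+a+bx)\}\simeq\C^2$ acts faithfully on $\A^2$ (and on every $\F_n$, $n\geq1$), and its orbits are the lines $x=\mathrm{const}$. Your own ``dimension one'' argument repairs this. What that argument really uses is not $\dim H_1=1$ but that the general orbits are curves. So state it as: for any connected algebraic subgroup normalized by $f$ whose general orbits are curves, the rational quotient is an $f$-invariant rational fibration, which is impossible. Only after excluding this may you assume that $\C^2$ has an open orbit. It then acts simply transitively there, and its normalizer lies in $\Aff_2(\C)$.

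Second, the case $\dim H\geq3$ is not an argument, for three reasons.
\begin{enumerate}
\item Blanc's theorem classifies maximal algebraic subgroups, not connected subgroups.
\item A proper characteristic subgroup of smaller positive dimension need not exist. For example, $\{(x,y)\mapsto(x,y+p(x)),\ \deg p\leq n\}\simeq\C^{n+1}\subseteq\Aut(\F_n)$ with $n\geq2$ has algebraic automorphism group $\GL_{n+1}(\C)$, which acts irreducibly.
\item The claim that the normalizer of a semisimple $H$ is algebraic is asserted, not proved.
\end{enumerate}

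A uniform way to finish is to use the radical $R(H)$, which is characteristic.
\begin{itemize}
\item If $R(H)\neq1$, let $A$ be the last nontrivial term of its derived series. Then $A$ is connected, commutative and normalized by $G$. By the orbit argument it has an open orbit. A faithful commutative group acts simply transitively on an open orbit, so $\dim A=2$. Hence $A$ is $(\C^*)^2$, $\C\times\C^*$ or $\C^2$, and your dimension-two analysis handles each of these.
\item If $R(H)=1$, then $H$ is semisimple, so $\mathrm{Out}(H)$ is finite. Some power $f^m$ therefore acts on $H$ as an inner automorphism, i.e.\ $f^m=hc$ with $h\in H$ and $c\in\Cent_{\bp}(H)$. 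The element $c$ commutes with a one-dimensional torus of $H$, so it preserves that torus's orbit fibration and is not loxodromic. Since $h$ and $c$ commute and $H$ is bounded by some $K$, we get $\deg(f^{mn})\leq K\deg(c^n)$. Hence $f$ is not loxodromic either, a contradiction.
\end{itemize}

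Finally, the words ``characteristic'' and ``$\mathrm{Out}(H)$ finite'' require that conjugation by $f$ be an automorphism of $H$ as an \emph{algebraic} group. Abstract automorphisms of $\C\times\C^*$ or $\PGL_2(\C)$ need not preserve the unipotent part, nor be inner up to a finite group. The needed fact is true, because conjugation by a birational map sends algebraic families to algebraic families, but you should state it.
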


\begin{lemma}
\label{lemma: second-derived fixes the axis}
Let $f \in \bp$ be loxodromic, and let $H \subseteq \Stab(\Ax(f))$ be a subgroup. Then
\[ D^2H \subseteq \Fix(\Ax(f)). \]
\end{lemma}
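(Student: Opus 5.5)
The plan is to use the restriction homomorphism $r\colon H\to\mathrm{Isom}(\Ax(f))$, which exists because $H$ preserves $\Ax(f)$ as a set. Its kernel is exactly $H\cap\Fix(\Ax(f))$, where $\Fix(\Ax(f))$ denotes the pointwise stabilizer of the axis. It therefore suffices to prove two things: that $\mathrm{Isom}(\Ax(f))$ is metabelian, and that $r$ is a homomorphism into it.

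\emph{Step 1.} The axis $\Ax(f)$ is a geodesic line in $\H^\infty$, so with the induced metric it is isometric to $\R$. Hence $\mathrm{Isom}(\Ax(f))\simeq\mathrm{Isom}(\R)=\R\rtimes\Z/2\Z$, whose elements are the maps $x\mapsto \pm x+c$. The derived subgroup of this group lies in the translation subgroup $\R$, since the sign part is recorded by the homomorphism to $\{\pm1\}$, which is abelian. The translation subgroup is abelian, so $D^2\mathrm{Isom}(\R)=1$.

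\emph{Step 2.} Each $h\in H$ maps the set $\Ax(f)$ onto itself and is an isometry of $\H^\infty$. Its restriction is therefore an isometry of $\Ax(f)$, and $h\mapsto h|_{\Ax(f)}$ is a group homomorphism $r$. Derived subgroups are functorial, so
\[ r(D^2H)=D^2 r(H)\subseteq D^2\mathrm{Isom}(\Ax(f))=1. \]
Thus every element of $D^2H$ acts trivially on $\Ax(f)$, which means $D^2H\subseteq\Fix(\Ax(f))$.

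There is no serious obstacle. The only point needing care is the identification of the geodesic line with $\R$ as a metric space, and this is immediate from the definition of a geodesic in the hyperbolic space $\H^\infty$. Note also that $f$ itself is not required to lie in $H$ for this argument.
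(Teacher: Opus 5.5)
Your proof is correct and follows the paper's argument essentially verbatim. Like the paper, you restrict to the axis to get a homomorphism $H\to\mathrm{Isom}(\Ax(f))\simeq\mathrm{Isom}(\R)\simeq\R\rtimes\Z/2\Z$ with kernel $H\cap\Fix(\Ax(f))$, and then use $D^2\,\mathrm{Isom}(\R)=1$ to conclude $r(D^2H)=D^2r(H)=1$.
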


\begin{proof}
The action of $H$ on the axis defines a homomorphism
\[ \rho \colon H \longrightarrow {\rm Isom}(\Ax(f)) \simeq {\rm Isom}(\R). \]
Its kernel is precisely $H \cap \Fix(\Ax(f))$. We have
${\rm Isom}(\R) \simeq \R \rtimes \Z/2\Z$, and in particular 
$D^2{\rm Isom}(\R)=1$. It follows that
\[ \rho(D^2H) \subseteq D^2\rho(H) =1. \]
Hence $D^2H \subseteq \ker(\rho)=H \cap \Fix(\Ax(f))$, as required.
\end{proof}

\begin{theorem}
\label{theorem: loxodromic case}
Let $H \subseteq \bp$ be a solvable subgroup containing a loxodromic element. Then we have
\[ \length(H) \leq 5. \]
\end{theorem}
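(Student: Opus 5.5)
The proof follows the outline given in the introduction and has three steps.

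\textbf{Step 1: $H$ preserves the axis.} Let $f\in H$ be loxodromic. Since $H$ is solvable, it contains no free subgroup of rank two. By the standard ping-pong dichotomy for groups acting on the hyperbolic space $\H^\infty$ and containing a loxodromic element, either $H$ contains two loxodromic elements with disjoint fixed-point pairs on $\partial\H^\infty$, or every loxodromic element of $H$ has the same pair of fixed points. In the first case, suitable powers of the two elements generate a free group, which is impossible. The same argument applies to $f$ and $hfh^{-1}$ for any $h\in H$, since $hfh^{-1}$ is loxodromic with fixed points $h(\partial \Ax(f))$. Hence $h$ preserves $\{f^+,f^-\}$, and therefore $H\subseteq\Stab(\Ax(f))$.

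By Lemma~\ref{lemma: second-derived fixes the axis}, we get $N:=D^2H\subseteq \Fix(\Ax(f))$. Moreover $N$ is normal in $H$.

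\textbf{Step 2: $N$ is bounded.} Every element of $N$ fixes a point of $\Ax(f)$, hence is elliptic. I expect $N$ to be bounded, because $H\cap\Fix(\Ax(f))$ fixes a point of $\H^\infty$, for instance the class in the axis. An element $g$ fixing a point $x$ satisfies $d(x,g(e_0))=d(x,e_0)$, which bounds $\cosh d(e_0,g(e_0))=\deg g$. So $N$ is bounded. Two cases follow.

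\emph{Case A: $N$ is infinite.} Theorem~\ref{theorem: Cantat-Delzant-Py} conjugates $H$ into $(\C^*)^2\rtimes\GL_2(\Z)$. The image of $H$ in $\GL_2(\Z)$ is solvable and contains the image of a loxodromic element, which is a hyperbolic matrix. Its solvable subgroups are virtually cyclic, and we want them to have derived length at most $2$. A solvable subgroup of $\GL_2(\Z)$ containing a hyperbolic matrix $M$ preserves the pair of eigenlines of $M$, so it embeds into a group of type (diagonal)$\rtimes\Z/2$ and has derived length at most $2$. Hence $\ell(H)\le 1+2\le 5$. (More crudely, $\psi(\GL_2(\Z))\le\psi(\GL_2(\C))=4$ gives $\ell(H)\le 5$ directly.)

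\emph{Case B: $N$ is finite.} By Theorem~\ref{theorem: psi_{finite}(Bir(P2)) =4}, $\ell(N)\le 4$, so $\ell(H)\le 2+\ell(N)\le 6$. We must exclude $\ell(N)=4$. Since $N$ is a finite normal subgroup of $H$, conjugation by $f$ induces an automorphism of the finite group $N$. Hence some power $f^m$, with $m\ge1$, centralises $N$. This power is still loxodromic, contradicting Theorem~\ref{theorem: the centraliser of a solvable group of derived length 4 has no loxodromic element}. Thus $\ell(N)\le 3$ and $\ell(H)\le 5$.

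\textbf{Step 3: the main obstacle.} The delicate part is Step 1 together with the boundedness claim in Step 2. For Step 1, the ping-pong on the infinite-dimensional hyperbolic space needs justification. I would cite the classification of group actions on Gromov-hyperbolic spaces: a group containing two independent loxodromics contains a free subgroup. Alternatively, I would use Cantat's Tits alternative for $\bp$. For boundedness, I would use the equality $\deg(g)=\cosh d(e_0,g e_0)$ for the class $e_0$ of a line, as in \cite{Cantat2011}.
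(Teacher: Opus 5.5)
Your Step~2 and the two cases are correct and essentially match the paper. You split on whether $D^2H$ itself is finite and apply Cantat's theorem directly to $H$. The paper instead splits on whether $\Fix(\Ax(f))$ is finite and applies the theorem to $\Stab(\Ax(f))$. Your variant is harmless and slightly cleaner. Your refined claim about solvable subgroups of $\GL_2(\Z)$ would need its own justification, but the crude bound $\psi(\GL_2(\Z))\leq 4$ suffices, as in the paper.

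The gap is in Step~1. The ``ping-pong dichotomy'' you state is false for isometric actions on hyperbolic spaces in general, because it omits the case of two loxodromic elements whose fixed-point pairs share exactly one point. For instance, take the solvable group $\{z\mapsto az+b\}\subseteq\PSL_2(\R)$ acting on $\H^2$, with $f(z)=2z$ and $h(z)=z+1$. Then $hfh^{-1}(z)=2z-1$ has fixed points $\{1,\infty\}$, while $f$ has $\{0,\infty\}$. So there is no ping-pong and no free subgroup, yet $h$ does not preserve $\Ax(f)$.

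The classification of actions on Gromov-hyperbolic spaces does not rescue the argument. A group with a loxodromic element and no free subgroup may be focal (fixing a single boundary point) rather than lineal (preserving a pair). Excluding the focal case cannot come from the geometry of $\H^\infty$ alone. Your fallback of citing Cantat's Tits alternative does not help by itself either, since ``virtually solvable or containing a free group'' says nothing about axes.

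The missing ingredient is specific to the Cremona group: in $\bp$, two loxodromic elements with distinct axes have no common fixed point on $\partial\H^\infty$ \cite[Lemma~6.3]{Urech2021}. This is exactly what the paper invokes. Applied to $f$ and $g=hfh^{-1}$, it gives two options. Either $\Ax(g)=\Ax(f)$, or the fixed-point pairs are disjoint, in which case ping-pong produces a free subgroup, contradicting solvability. With this lemma inserted, your Step~1 yields $H\subseteq\Stab(\Ax(f))$, and the rest of your proof goes through as written.
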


\begin{proof}
Let $f \in H$ be loxodromic.  We first prove that $H \subseteq \Stab(\Ax(f)).$

Let $h\in H$ and put $g=hfh^{-1}$. Suppose that
$\Ax(g)\neq\Ax(f)$. By \cite[Lemma~6.3]{Urech2021}, the loxodromic elements $f$ and $g$ have no common fixed point on the boundary of the Picard--Manin hyperbolic space. Suitable powers of $f$ and $g$ therefore generate a non-abelian free group by the usual ping-pong argument. This is impossible since $H$ is solvable. Hence $\Ax(g)=\Ax(f)$, and therefore $h$ preserves $\Ax(f)$ as claimed.

By Lemma~\ref{lemma: second-derived fixes the axis}, we have
\[ D^2H\subseteq\Fix(\Ax(f)). \]

Assume first that $\Fix(\Ax(f))$ is infinite. Recall that a subgroup of $\bp$ admitting a fixed point in $\H^\infty$ is bounded (see e.g. \cite[Lemma~2.6]{Urech2021}). Hence  $\Fix(\Ax(f))$ is bounded. Since it is also normal in $\Stab(\Ax(f))$, Theorem~\ref{theorem: Cantat-Delzant-Py} shows that $\Stab(\Ax(f))$ is conjugate to a subgroup of the monomial group
\[ \Aut \bigl( (\C^*)^2 \bigr) =(\C^*)^2\rtimes\GL_2(\Z). \]
Since $H$ is contained in $\Stab(\Ax(f))$, we get
\[ \length(H) \leq \psi \bigl( \Aut \bigl( (\C^*)^2 \bigr) \bigr) \leq \psi\bigl( (\C^*)^2 \bigr) +\psi \bigl( \GL_2(\Z) \bigr) \leq 1+4=5. \]

We may therefore assume that $\Fix(\Ax(f))$ is finite. Then $D^2H$ is finite.
By Theorem~\ref{theorem: psi_{finite}(Bir(P2)) =4}, we have
\[ \length(D^2H) \leq 4, \]
and therefore
\[ \length(H) \leq 2+\length(D^2H) \leq 6. \]
Suppose, for a contradiction, that $\length(H)=6$. Then
\[ F:=D^2H \]
is a finite solvable subgroup of $\bp$ of derived length $4$.
Since $D^2H$ is characteristic in $H$, the loxodromic element $f$ normalizes $F$. Conjugation by $f$ therefore induces an automorphism of the finite group $F$. Since $\Aut(F)$ is finite, some positive power of this automorphism is trivial.
Equivalently, there exists an integer $m \geq 1$ such that
\[ f^m \in \Cent_{\bp}(F). \]
But $f^m$ is still loxodromic, contradicting Theorem~\ref{theorem: the centraliser of a solvable group of derived length 4 has no loxodromic element}. Thus $\length(H) \neq 6$, and consequently $\length(H) \leq 5$.
\end{proof}

\subsection{The Jonqui\`eres group}

We next establish the sharp bound for solvable subgroups preserving a rational fibration. Let $p_0=[1:0:0]$, and denote by
$\Jonq_{p_0}\subseteq\bp$ the subgroup preserving the pencil of lines through $p_0$.

The symmetric group $\Sym_4$ can be realized in $\PGL_2(\C)$ in the following way:

\begin{definition} \label{definition: a realization of S4 in PGL(2,C)}
Set $\Sym_4 := \big\langle y \mapsto iy, y \mapsto \frac{y+1}{y-1} \big\rangle \subseteq \PGL_2 (\C)$.
\end{definition}

Recall that this realization is unique up to conjugation
(see Lemma~\ref{lemma: finite subgroups of PGL_2(C)}). As usual, the group $\Sym_4$ might also be seen as a subgroup of $\bp$ via the natural embedding of $\PGL_2 (\C)$ in $\bp$ given by $(y \mapsto g(y) ) \mapsto (x, g(y) )$.

\begin{lemma} \label{lemma: a subgroup of J whose derived length is 5}
The group
\[ \Aff_1 (\C (y) ) \rtimes \Sym_4 = \{ (a(y)x + b(y), g(y) ), \; a,b \in \C(y),\; a \neq 0, \; g \in \Sym_4 \} \subseteq \JJ \]
has derived length $5$.
\end{lemma}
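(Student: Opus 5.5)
Write $G:=\Aff_1(\C(y))\rtimes\Sym_4$ and $A:=\Aff_1(\C(y))=\C(y)\rtimes\C(y)^*$, the normal subgroup of maps $(a(y)x+b(y),y)$. The plan is to bound $\ell(G)$ from above by the short exact sequence $1\to A\to G\to\Sym_4\to1$, and then to compute enough of the derived series of $G$ to see that $D^4G\neq1$.

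\textbf{Upper bound.} The group $A$ is an extension of the abelian group $\C(y)^*$ by the abelian group $\C(y)$, so $\ell(A)\leq 2$. Since $\ell(\Sym_4)=3$, the elementary inequality gives $\ell(G)\leq 2+3=5$.

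\textbf{Lower bound.} I would track the derived series explicitly. Write $N:=(\Z/2\Z)^2\subseteq\Sym_4$ for the Klein group acting on the variable $y$.

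\begin{enumerate}
\item First check that $D^1G\supseteq \C(y)\rtimes\Alt_4$ and $D^2G\supseteq\C(y)\rtimes N$.
\begin{enumerate}
\item The translation part is obtained from commutators with homotheties. For $(x+b,y)$ and $(-x,y)$ the commutator is $(x\pm 2b,y)$, so all translations $\C(y)$ lie in $D^1G$.
\item The projection of $D^iG$ to $\Sym_4$ is $D^i\Sym_4$.
\item Since the translations already lie in $D^1G$, they are recovered again at the next step by commuting with $(-x,y)$-type elements that are present. More precisely, I would show that $D^iG$ contains both all translations and an element $(c_i x, g(y))$ for each $g\in D^i\Sym_4$. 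Then $[(c x,g),(x+b,y)]$ yields translations by $c\,b\circ g^{-1}-b$ (up to the exact formula).
\end{enumerate}

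\item The key is to keep all of $\C(y)$ inside $D^3G$. Here $D^2G$ projects onto $N$, and the translation parts of $[D^2G,D^2G]$ contain $b\circ h-b$ for $h\in N$ and $b\in\C(y)$.
\begin{enumerate}
\item This is where the sign $-1$ helps, as in Lemma~\ref{lemma: length-four subgroup of H_{216}}.
\item The simplest route is to ensure $(-x,y)\in D^2G$: the commutator of $(-x,y)$ with $(x+b,y)$ gives $2b$, hence all translations.
\item Is $(-x,y)$ in $D^2G$? We get $D^1G\ni[(a x,y),(x,g)]=(a/a\circ g^{-1}\,x,y)$ (roughly). Taking $g$ of order $4$ acting on $y$ and $a$ a suitable rational function, a ratio $a/(a\circ g)=-1$ is achievable: e.g. $g(y)=iy$ and $a=y^2$ gives $a(iy)/a(y)=-1$.
\item To get $(-x,y)$ one level deeper, use $g\in\Alt_4$ of order $2$ (for instance $y\mapsto -y$, which lies in $N$), and an $a$ with $a(-y)/a(y)=-1$, i.e. $a=y$. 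Then $(y x,y)\in D^1G$ follows because $D^1G$ contains all $(u x,y)$ with $u$ of the form $a/(a\circ g)$, $g\in\Sym_4$.
\item One shows $y$ has this form modulo the needed products, or uses the order-$3$ elements of $\Alt_4$. This bookkeeping is the main obstacle, and I would handle it by explicit choices of rational functions and elements of the concrete realization of $\Sym_4$ in Definition~\ref{definition: a realization of S4 in PGL(2,C)}.
\end{enumerate}

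\item Once $\C(y)$ translations lie in $D^3G$, we have $D^3G\supseteq\C(y)\rtimes 1$ together with elements projecting onto $N$. Then $D^4G$ contains the nonzero translations $b\circ h-b$, for example $b=y$ and $h=(y\mapsto -y)$, giving $-2y\neq0$. Hence $D^4G\neq1$ and $\ell(G)=5$.
\end{enumerate}
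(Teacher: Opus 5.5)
Your upper bound is fine, and items 1--2 can be completed. For the bookkeeping in item 2, your formula $[(ax,y),(x,g)]=(a/(a\circ g^{-1})\,x,\,y)$ with $a=y+1$ and $g=y^{-1}$ gives $(yx,y)\in D^1G$. Commuting this with $(x,-y)\in D^2G\subseteq D^1G$ gives $(-x,y)\in D^2G$, so all translations lie in $D^3G$.

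The genuine gap is item 3, which is off by one level. By your own item 1(b), $D^3G$ projects onto $D^3\Sym_4=1$. Hence $D^3G\subseteq\Aff_1(\C(y))$, and it contains no element projecting onto $N$. The translations $b\circ h-b$ with $h\in N$ are commutators of elements of $D^2G$, so they lie in $D^3G$ (this is exactly your item 2), not in $D^4G$. Translations commute with each other, so knowing that $D^3G$ contains all translations says nothing about $D^4G$. To conclude $D^4G\neq1$ you need an element of $D^3G$ whose linear part $c(y)$ is not $1$. Your plan only places a homothety, $(-x,y)$, in $D^2G$.

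The missing idea is to push a homothety down one more level using $(x,y^{-1})$. This element lies in $D^2G$ because $y\mapsto y^{-1}$ belongs to $N=D^2\Sym_4$. Iterating your commutator formula with $g=y^{-1}$ gives $(yx,y)\in D^1G$, then $(y^2x,y)\in D^2G$, then $(y^4x,y)\in D^3G$. For any nonzero translation $(x+b,y)\in D^3G$ one then gets $[(y^4x,y),(x+b,y)]=(x+(y^4-1)b,\,y)\neq\mathrm{id}$, so $D^4G\neq1$.

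This is precisely the paper's argument. It uses the single element $(x,y^{-1})\in D^2G$ to track two things through three commutator steps. The linear parts evolve by $a\mapsto a(y)/a(y^{-1})$, giving $y^4$ at level $3$ from $a=y+1$. The translation parts evolve by $b\mapsto b(y)-b(y^{-1})$, giving $y-y^{-1}$ at level $3$ from $b=y/4$. The paper then observes that these two elements of $D^3G$ do not commute.
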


\begin{proof}
Define the following elements of $\PGL_2(\C)$:
\[ g_1 (y) = iy, \quad g_2(y) = \frac{y+1}{y-1}, \quad g_3 (y) = \frac{\phantom{-}iy+1}{-iy+1}, \quad g_4(y) = - \frac{1}{y}, \quad g_5(y) = \frac{1}{y}.\]
The following computations show that $g_5 \in D^2( \Sym_4 )$:
\[ [g_1,g_2] = g_3, \quad  [(g_1)^2,g_2] = g_4, \quad [g_3,g_4] =g_5.\]
Setting $G:= \Aff_1 (\C (y) ) \rtimes \Sym_4$, we have shown that $(x,y^{-1}) \in D^2G$. For any $a(y) \in \C(y)^*$, $b(y) \in \C(y)$, the following identities are satisfied:
\begin{equation}[ (a(y)x, y) , (x,y^{-1} ) ] = ( a(y) a(y^{-1})^{-1}x, y ) \quad \text{and} \label{equation: with a} \end{equation}
\begin{equation}[ (x+b(y) , y) , (x,y^{-1} ) ] = ( x+b(y) -b(y^{-1}), y ).\label{equation: with b} \end{equation}
Equation \eqref{equation: with a} shows that if $k \in \{0,1,2\}$ and $(a(y)x,y) \in D^k(G)$, then we have
\[ ( a(y) a(y^{-1})^{-1}x, y ) \in D^{k+1} (G).\]
Hence, we get $( a^4(y) a(y^{-1})^{-4}x, y ) \in D^{3} (G)$ for any $a \in \C(y)^*$. Taking $a = y+1$, we obtain $(y^4x,y) \in D^{3} (G)$.

Analogously, equation \eqref{equation: with b} shows that if $k \in \{0,1,2\}$ and $(x + b(y) ,y) \in D^k(G)$, then $(x + b(y) -b(y^{-1}), y ) \in D^{k+1} (G)$. Hence, we get $( x + 4 b(y) -4 b(y^{-1}) , y ) \in D^{3} (G)$ for any $b \in \C(y)$. Taking $b= \frac{y}{4}$, we obtain $(x +y-y^{-1} ,y) \in D^{3} (G)$.

Finally, since the elements $(y^4x,y)$ and $(x +y-y^{-1} ,y)$ of $D^3(G)$ do not commute, their commutator defines a non-trivial element of $D^4(G)$. This shows that $\length (G) \geq 5$. Since $\ell(\Aff_1(\C(y)))=2$ and $\ell(\Sym_4)=3$, we have
$\ell(G)\leq 5$, and therefore $\ell(G)=5$.
\end{proof}

\begin{proposition} \label{proposition: psi(Jonq)=5}
We have $\psi ( \JJ) = 5$.
\end{proposition}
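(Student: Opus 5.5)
The lower bound $\psi(\JJ)\geq 5$ is immediate from Lemma~\ref{lemma: a subgroup of J whose derived length is 5}, since $\Aff_1(\C(y))\rtimes\Sym_4\subseteq\JJ$. So the content of the proposition is the upper bound. For that I would use the standard structure of the Jonquières group: every element of $\JJ$ preserves the pencil of lines through $p_0$, hence induces an automorphism of the base $\p^1$. This gives a split short exact sequence
\[ 1\longrightarrow \PGL_2(\C(y))\longrightarrow \JJ \longrightarrow \PGL_2(\C)\longrightarrow 1, \]
where the kernel consists of the maps $(x,y)\mapsto\bigl(\frac{a(y)x+b(y)}{c(y)x+d(y)},y\bigr)$.

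Let $G\subseteq\JJ$ be solvable, and write $N:=G\cap\PGL_2(\C(y))$ and $Q$ for its image in $\PGL_2(\C)$. Lemma~\ref{lemma: psi(PGL_2(C))=3} gives $\ell(Q)\leq 3$, and Remark~\ref{remark: C(y) embedding in C} gives $\ell(N)\leq 3$. The subadditivity inequality only yields $\ell(G)\leq 6$, so the real work is to exclude $\ell(G)=6$. In that extremal case both $\ell(Q)=3$ and $\ell(N)=3$ are forced. Hence $Q\simeq\Sym_4$ and $N\simeq\Sym_4$ as abstract groups, by the equality clauses of Lemma~\ref{lemma: psi(PGL_2(C))=3} and Remark~\ref{remark: C(y) embedding in C}. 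In particular $N$ is finite and $G$ is a finite extension of $\Sym_4$ by $\Sym_4$, of order $576$. So $G$ is a finite solvable subgroup of $\bp$, and Theorem~\ref{theorem: psi_{finite}(Bir(P2)) =4} gives $\ell(G)\leq 4<6$, a contradiction.

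The case $\ell(G)=6$ is thus ruled out immediately. Subadditivity, applied now with the bounds in hand, gives $\ell(G)\le \ell(N)+\ell(Q)$, so it remains to exclude $\ell(G)=6$ only, and we have done so; hence $\ell(G)\leq 5$. I should double-check the subadditivity step: $D^{\ell(Q)}G\subseteq N$, so $\ell(G)\leq \ell(Q)+\ell(N)$. Equality $6$ requires both summands to equal $3$, which is exactly the case treated above. If either is at most $2$, then $\ell(G)\leq 5$ directly.

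The main obstacle I anticipate is making sure the equality characterisation transfers correctly over $\C(y)$. The argument needs an abstract embedding $\PGL_2(\C(y))\hookrightarrow\PGL_2(\C)$ under which a solvable subgroup of derived length $3$ becomes isomorphic to $\Sym_4$, and hence finite. Remark~\ref{remark: C(y) embedding in C} states exactly this, so I would rely on it. The other point to confirm is that the sequence above really is exact with that kernel, i.e.\ that elements of $\JJ$ act on the base by Möbius transformations. This is standard: in affine coordinates $\JJ\simeq\PGL_2(\C(y))\rtimes\PGL_2(\C)$.
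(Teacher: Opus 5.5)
Your proposal is correct and is essentially the paper's own proof. Both use the exact sequence $1\to\PGL_2(\C(y))\to\JJ\to\PGL_2(\C)\to 1$ and subadditivity to get $\ell(G)\leq 6$. In the equality case the remark on $\C(y)\hookrightarrow\C$ forces both pieces to be $\Sym_4$, so $G$ is finite of order $576$, and $\psi_{\mathrm{finite}}(\bp)=4$ gives the contradiction. The lower bound comes from $\Aff_1(\C(y))\rtimes\Sym_4$. Your third paragraph only restates the subadditivity step and could be dropped.
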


\begin{proof}
Let $H \subseteq \JJ$ be a solvable subgroup. The short exact sequence
\[ 1 \to \PGL_2 ( \C (y) ) \to \JJ \xrightarrow{\pr_2} \PGL_2 (\C ) \to 1 \]
induces the following short exact sequence
\[ 1 \to H_1 \longrightarrow H \xrightarrow{(\pr_2)_{| H}} H_2 \to 1, \]
where $H_1:= H \cap \PGL_2 ( \C (y) )$ and $H_2 := \pr_2 (H)$. This yields
\[ \length (H) \leq \length(H_1 ) + \length ( H_2 ) \leq 2 \,  \psi ( \PGL_2(\C) ) = 6. \]
By Lemma~\ref{lemma: psi(PGL_2(C))=3} and Remark~\ref{remark: C(y) embedding in C}, the case of equality implies that both $H_1$ and $H_2$ are isomorphic to $\Sym_4$. Hence, $H$ is finite (of cardinality $| \Sym_4 |^2 = (4!)^2$). Since $H$ is finite and solvable, Theorem~\ref{theorem: psi_{finite}(Bir(P2)) =4} gives $\ell(H)\leq4$, a contradiction. Hence, we have proven that $\length (H) \leq 5$ and $\psi ( \JJ ) \leq 5$. Finally, Lemma~\ref{lemma: a subgroup of J whose derived length is 5} shows $\psi ( \JJ ) \geq 5$ and the equality follows.
\end{proof}

\subsection{Halphen surfaces} 

For Halphen surfaces, the required bound follows directly from the natural short exact sequence associated with the action of the automorphism group on the base of the Halphen fibration.

\begin{proposition} \label{proposition: psi(Aut(Halphen)) leq 5}
If $Y$ is a Halphen surface, we have $\psi(\Aut(Y))\leq 5$.
\end{proposition}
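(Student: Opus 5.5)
We use the Halphen fibration. A Halphen surface $Y$ (of some index $m\geq1$) carries a unique elliptic fibration $\pi\colon Y\to\p^1$, given by the linear system $|-mK_Y|$. Since this system is canonically attached to $Y$, every automorphism of $Y$ preserves it. Hence we get a homomorphism
\[ \rho\colon \Aut(Y)\longrightarrow \Aut(\p^1)=\PGL_2(\C), \]
and, as observed in the introduction to this subsection, a short exact sequence
\[ 1\longrightarrow \Aut(Y/\p^1)\longrightarrow\Aut(Y)\longrightarrow\rho(\Aut(Y))\longrightarrow1. \]
The image $\rho(\Aut(Y))$ preserves the finite set of points over which $\pi$ has singular fibres. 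A Halphen fibration has at least one singular fibre, and in fact the Euler characteristic count $e(Y)=12$ forces enough singular fibres (at least three, or a non-reduced multiple fibre together with others) that the stabiliser is finite. We only need the weaker statement that $\rho(\Aut(Y))$ is a solvable-or-not subgroup of $\PGL_2(\C)$. For it, Lemma~\ref{lemma: psi(PGL_2(C))=3} gives $\psi\le 3$ for its solvable subgroups.

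Next we bound the kernel $\Aut(Y/\p^1)$. It acts on the generic fibre $Y_\eta$, an elliptic curve over $\C(t)$ (a genus one curve, possibly without rational point when $m>1$), and this action is faithful. Thus $\Aut(Y/\p^1)$ embeds into $\Aut(Y_\eta)$. That group is an extension of a finite cyclic group of order at most $6$ by the abelian group $\mathrm{Pic}^0(Y_\eta)$ of translations. Automorphisms of a genus one curve fixing the class of a point form the cyclic group $\mu_2,\mu_4$ or $\mu_6$, and all translations commute. Hence every subgroup of $\Aut(Y/\p^1)$ has derived length at most $2$.

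Combining the two steps with the subadditivity $\psi(G)\le\psi(N)+\psi(Q)$ recalled at the start of Section~\ref{section: finite bounded solvable subgroups} gives
\[ \psi(\Aut(Y))\le 2+3=5, \]
which is the claim.

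The main obstacle is making the kernel bound rigorous when $m>1$, where $Y_\eta$ has no rational point. We handle this by noting that $\Aut(Y_\eta)$ is a subgroup of $\Aut(Y_{\bar\eta})$, the automorphism group of the elliptic curve over the algebraic closure $\overline{\C(t)}$. That group is $E\rtimes\mu_k$ with $E$ abelian and $k\in\{2,4,6\}$, so every subgroup is metabelian. We also need the uniqueness and invariance of the Halphen pencil, which is standard: the pencil is the linear system $|-mK_Y|$.
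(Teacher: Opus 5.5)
Your proof is correct and follows the paper's route: split $\Aut(Y)$ by its action on the base of the Halphen fibration, bound the kernel by $2$ and the image in $\PGL_2(\C)$ by $3$. The only difference is that the paper quotes the structure of the kernel from Cantat--Dolgachev \cite[Remark~2.11]{CantatDolgachev2012}, whereas you derive it yourself from $\Aut(Y_{\bar\eta})\simeq E\rtimes\mu_k$; separately, drop your aside on finiteness of the image, which is unused and false in general (e.g.\ the rational elliptic surface $y^2=x^3+t$ has only two singular fibres, of types $II$ and $II^*$, and a $\C^*$-action moving the base).
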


\begin{proof}
By \cite[Remark~2.11]{CantatDolgachev2012}, there is a short exact sequence
\[ 1\longrightarrow G \longrightarrow \Aut(Y) \longrightarrow H \longrightarrow 1, \] 
where $G$ is an extension of a finitely generated abelian group of rank at most 8 by a cyclic group of order dividing 6, and $H$ is a finite subgroup of $\Aut(\p^1)=\PGL_2(\C)$. This yields \[\psi(\Aut(Y))\leq \psi(G)+\psi(H) \leq \psi(G)+\psi(\PGL_2(\C)) \leq 2+3=5. \qedhere \]
\end{proof}

\section{Proof of the main theorem}
\label{section: proof of the main theorem}
We now complete the proof of the main theorem.

\begin{proof}[Proof of Theorem \ref{theorem: main}]
Since we already know that we have $\psi(\bp) \geq 5$, it remains to prove the reverse inequality. Let $H \subseteq \bp$ be a solvable subgroup. We want to prove that $\length(H) \leq 5$.

1) If $H$ contains a loxodromic element, we conclude by Theorem~\ref{theorem: loxodromic case}.

2) If $H$ does not contain a loxodromic element but does contain a parabolic element, then $H$ is conjugate to a subgroup of $\Jonq_{p_0}$ or to a subgroup of $\Aut(Y)$, where $Y$ is a Halphen surface, by \cite[Lemma~21.19]{LamyCremonaBook} (see also \cite[Lemma~2.5]{Urech2021}, where it is explained that this result follows from \cite{Cantat2011}). If $H$ is contained in $\Jonq_{p_0}$ (resp.\ $\Aut(Y)$ for a Halphen surface $Y$), then we have $\ell(H)\leq 5$ by Proposition~\ref{proposition: psi(Jonq)=5} (resp.\ by Proposition~\ref{proposition: psi(Aut(Halphen)) leq 5}).

3) We may therefore assume that every element of $H$ is elliptic. By \cite[Theorem~1.4]{Urech2021}, one of the following three alternatives holds:
\[ \begin{array}{ll}
\text{(i)} & H \text{ preserves a rational fibration;}\\
\text{(ii)} & H \text{ is bounded;}\\
\text{(iii)} & H \text{ is a torsion group.}
\end{array} \]
If $H$ preserves a rational fibration, then, after conjugation,
$H\subseteq\Jonq_{p_0}$, and we conclude by
Proposition~\ref{proposition: psi(Jonq)=5}. If $H$ is bounded, we conclude by Proposition~\ref{proposition: psi_{bounded}(Bir(P2)) =5}.  If $H$ is a torsion group, then $H$ is isomorphic to a bounded subgroup $K\subseteq\bp$ by \cite[Theorem~1.7]{Urech2021}, and we again conclude by Proposition~\ref{proposition: psi_{bounded}(Bir(P2)) =5}.\end{proof}

\end{document}